\numberwithin{equation}{section}
\numberwithin{figure}{section}
\newtheorem{theorem}{Theorem}
\newtheorem{conjecture}[theorem]{Conjecture}
\newtheorem{lemma}[theorem]{Lemma}
\newtheorem{problem}[theorem]{Problem}
\newtheorem{remark}[theorem]{Remark}
\newtheorem{claim}[theorem]{Claim}
\newcommand{\eps}{\varepsilon}
\newcommand{\Prob}{\operatorname{Pr}}
\newcommand{\cF}{\mathcal{F}}
\newcommand{\cG}{\mathcal{G}}
\title{Maker playing against an invisible Breaker}
\date{}
\author[1]{Dennis Clemens}
\author[1]{Fabian Hamann}
\author[2]{Mirjana Mikala\v{c}ki}
\author[3]{Yannick Mogge}
\author[2]{Milo\v{s} Stojakovi\'c}
\affil[1]{Hamburg University of Technology, Institute of Mathematics, Am Schwarzenberg-Campus 3, 21073 Hamburg, Germany}
\affil[2]{Department of Mathematics and Informatics, Faculty of Sciences, University of Novi Sad, Serbia}
\affil[3]{Universite Claude Bernard Lyon 1, CNRS, INSA Lyon, LIRIS, UMR5205, 69622 Villeurbanne, France}
\begin{document}

\maketitle

%%%%%%%%%%%%%%%%%%%%%%%%%%%%%%%%%%%%%%%%%%%%%%%%%%%%%
%%
%%  Abstract
%%
%%%%%%%%%%%%%%%%%%%%%%%%%%%%%%%%%%%%%%%%%%%%%%%%%%%%%

\begin{abstract}
We initiate the study of the phantom version of Maker-Breaker positional games. In a phantom game, the moves of one of the players are hidden from the other player, who still has the complete information. We look at the biased $(a:b)$ Maker-PhantomBreaker games where the board is the edge set of the complete graph on $n$ vertices, $K_n$, and Maker has no information about PhantomBreaker's choices of edges. We give randomized strategies for both players in four classical games: connectivity game, perfect matching game, mindegree-$k$ game and Hamiltonicity game. In particular, we focus on characterizing all biases $(a:b)$ for which Maker wins asymptotically almost surely.
\end{abstract}
%%%%%%%%%%%%%%%%%%%%%%%%%%%%%%%%%%%%%%%%%%%%%%%%%%%%%
%%
%%   Introduction
%%
%%%%%%%%%%%%%%%%%%%%%%%%%%%%%%%%%%%%%%%%%%%%%%%%%%%%%

\section{Introduction}
The study of combinatorial games with imperfect information leads to a very challenging field. One family of such games are those in which the actions of one player are invisible for the opponent, like \textit{Phantom Tic-Tac-Toe} (see, e.g.,~\cite{teytaud2011lemmas,wang2015belief}) and \textit{Phantom Go}, which has become a classical challenge in AI and it is a part of the annual computer Olympiads (see, e.g.,~\cite{cazenave2005phantom,teytaud2011lemmas,wang2015belief,zhu2015static}),
or \textit{cops-and-robber games} with limited visibility (see, e.g.,~\cite{basic2025limited,bonato2023one,clarke2020limited,dereniowski2015zero,kehagias2013cops,xue2024zero}). While there is already much research on these and similar games, so far much less is known about phantom variants of Maker-Breaker positional games.

Maker-Breaker positional games are perfect information deterministic combinatorial games. Given two positive integers $a$ and $b$, a finite set $X$, and a family $\cF\subseteq 2^X$, the $(a:b)$ Maker-Breaker game with hypergraph $(X,\mathcal{F})$ is played by two players, \emph{Maker} and \emph{Breaker}, who take turns in claiming previously unclaimed elements of $X$, with Maker going first. In each round, first Maker claims $a$ elements and then Breaker claims $b$ elements. Maker wins the game if she claims all the elements of some $F \in \cF$, and Breaker wins otherwise. The set $X$ is referred to as the \emph{board} of the game, and $\cF$ is called the family of \emph{winning sets}, while the natural numbers $a$ and $b$ are the \emph{biases} of Maker and Breaker. When $a=b=1$, the game is called \emph{unbiased}, while in other cases it is called \emph{biased}. 

Maker-Breaker games cover a wide range of games from the popular commercial games \emph{Bridge-It} and \emph{Hex} to more abstract games played on graphs. The research on Maker-Breaker games dates back to the papers of Erd\H{o}s and Selfridge~\cite{erdos1973combinatorial} and Chv\'atal and Erd\H{o}s~\cite{chvatal1978biased}, but the thorough study of this type of games has started with Beck in the late 90's and was further continued by many authors; for a systematic insight we refer the reader to the monographs~\cite{beck2008combinatorial,hefetz2014positional}.

A substantial amount of research has been dedicated to Maker-Breaker games played on the edge set of a given graph $G$, i.e.~on the board $X = E(G)$, where the first and the most commonly looked at graph is the complete graph on $n$ vertices, $K_n$. In this paper we also study games played on $K_n$. 
As for the winning sets, we focus on the four classical families that are central to the study of positional games on graphs. In particular, we consider the \emph{connectivity game}, where the winning sets are all spanning trees of $K_n$, the \emph{perfect matching game}, where the winning sets are all perfect matchings of $K_n$, the \emph{Hamiltonicity game}, where the winning sets are all Hamilton cycles of $K_n$, and the \emph{mindegree-$k$} game, where the winning sets are all spanning subgraphs of $K_n$ with minimum degree $k$.  

As $n$ increases, in all the aforementioned unbiased games on $K_n$ Maker easily wins. In order to even out the odds for Breaker and make studying of this type of games more interesting, \emph{biased games} were introduced by Chv\'{a}tal and Erd\H{o}s in their seminal paper~\cite{chvatal1978biased}, and they have been extensively studied ever since. The general goal is to determine the unique \emph{threshold bias} $b^*=b^*(a)$, i.e.~the smallest integer such that in the $(a:b)$ Maker-Breaker game Maker wins if and only if $b<b^*$. 
Consider, for instance, the $H$-game on $K_n$
with biases $(1:b)$ in which Maker wins if and only if she claims all edges of a copy of a fixed graph $H$ of constant size.
For this particular game, Bednarska and {\L}uczak~\cite{bednarska2000biased} showed that, if $H$ contains a cycle, the threshold $b^*=b^*(1)$ is of the order $n^{1/m_2(H)}$, where $m_2(H)$ denotes the maximum 2-density of $H$. While this result is interesting in itself, it shows a deep connection to Ramsey properties of the Binomial random graph. Moreover, we still find it surprising that Maker's strategy given in~\cite{bednarska2000biased}, that is optimal up to a leading constant of the bias, is random and fully ignores Breaker's edge choices. 
Hence, the paper of Bednarska and {\L}uczak is actually the first instance of a Maker-Breaker game with a phantom player, as Breaker is treated as invisible, i.e.~playing as a phantom.

Other positional games on graphs for which the threshold biases were determined asymptotically, yet no phantom variants were considered in the literature, are the aforementioned games on $K_n$. For the moment, let $a=1$. In this case, Chv\'{a}tal and Erd\H{o}s~\cite{chvatal1978biased} showed that Breaker can isolate a vertex in Maker's graph when $b=(1+\eps)\frac{n}{\log n}$, and established the threshold bias for the connectivity game to be $\Theta(\frac{n}{\log n})$. Later, Gebauer and Szab\'{o}~\cite{gebauer2009asymptotic} proved that for the connectivity and mindegree-$k$ game (for a constant $k$), the threshold bias is $(1-o(1))\frac{n}{\log n}$. Moreover, after a series of papers~\cite{beck1985random,bollobas1982biased,krivelevich2008biased},  Krivelevich~\cite{krivelevich2011critical} showed that the threshold bias for the Hamiltonicity game, and thus also for the perfect matching game, is of the same order. 

If we assume that, instead of playing optimally, both Maker and Breaker claim random unclaimed edges in the $(1:b)$ game on $K_n$, we call the players \emph{RandomMaker} and \emph{RandomBreaker}. Then the graph obtained by RandomMaker at the end of such a game is actually a random graph with $n$ vertices and $m$ edges, $\cG(n,m)$, for $m=\lceil\binom{n}{2}/{(b+1)}\rceil\sim\frac{n^2}{2b}$. From the theory of random graphs (see, e.g.,~\cite{bollobas1998random, janson2011random}) it is known that RandomMaker a.a.s.~wins the connectivity game, perfect matching game, Hamiltonicity game, and mindegree-$k$ game if and only if $b\leq (1-o(1))\frac{n}{\log n}$
(where the low order terms contained in $o(1)$ differ for these four games).
Thus, the threshold biases for these randomly played games have the same leading term as the thresholds mentioned above for the games played by clever players. This amazing relation between positional games and random graphs is attributed to Erd\H{o}s and it is known as the \emph{probabilistic intuition}. It inspired extensive research in positional game theory, particularly in determining the threshold bias for specific games and assessing whether a probabilistic intuition holds, see, e.g.,~\cite{beck1994deterministic,clemens2016random, gebauer2009asymptotic, stojakovic2005positional}.
Analogous results for reasonable values $a>1$ were shown in~\cite{fouadi2024asymptotically,hefetz2012doubly} for the connectivity game, and in~\cite{fouadi2024asymptotically,mikalavcki2013positional} for the Hamiltonicity and therefore also the perfect matching game, and in~\cite{fouadi2024asymptotically} for the mindegree-$k$ game. Moreover, also inspired by the result of Bednarska and {\L}uczak, thresholds of the so-called half-random games were determined, in which one of the two players plays fully at random while the opponent plays cleverly, see, e.g.,~\cite{groschwitz2016sharp,groschwitz2017sharp,krivelevich2015random}. 

Motivated by all the mentioned versions of the Maker-Breaker games and the phantom versions of combinatorial games as described earlier, but also inspired by real-life situations in networks, where security is concerned but the attacker's strategy is unknown, 
%and also in AI, where the decisions have to be made under certain degree of uncertainty\footnote{\mima{Maybe I got carried away :), a bit, in here, if you think practical application is a bit too much, we can just remove it.}}, 
we look at the following one-sided imperfect information games, a phantom version of the Maker-Breaker games that we call \emph{Maker-PhantomBreaker games}.
These games are played as the usual Maker-Breaker games with the following twist. While PhantomBreaker (playing in the role of Breaker) has complete information about the whole game, Maker only gets to know about the moves of PhantomBreaker when she tries to claim some element of the board and it then turns out to be claimed by PhantomBreaker already. Note that here we must choose between two approaches: treating such Maker’s moves as failures that result in a lost move, as in~\cite{bednarska2000biased}, or allowing Maker to play again, as in, say, Phantom Tic-Tac-Toe. We opt for the former.

We analyse the games of mindegree-$k$, connectivity, perfect matching and Hamiltonicity, for any constant biases $a$ and $b$. For each of the players we describe randomized strategies and estimate the probability with which a given strategy is successful. In particular, we manage to determine all the values of the fraction $\frac{b}{a}$ for which Maker has a strategy that wins against PhantomBreaker a.a.s.

\medskip

We start with the mindegree-$k$ game. In the case $k=1$ there is a PhantomBreaker's strategy that allows a rather precise estimate on the probability of Maker's win. Note that such strategy is valuable as, similarly to the case of classical positional games on graphs, PhantomBreaker can win in both the connectivity game and the perfect matching game by applying a mindegree-$1$ game winning strategy.

\begin{theorem}\label{thm:breaker_degree1}
Let $a,b$ be positive integers with $b>2a$.
In the $(a:b)$ mindegree-1 Maker-PhantomBreaker game on $E(K_n)$, Maker's probability to win is bounded from above by
$$ \prod_{i=0}^{\left\lfloor\frac{b}{2a}\right\rfloor -1} \frac{1}{\frac{b}{2a}-i}.$$
\end{theorem}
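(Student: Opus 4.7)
Since PhantomBreaker wins the mindegree-$1$ game whenever he isolates a single vertex in Maker's graph, my plan is to give a randomized isolation strategy for PhantomBreaker and bound Maker's winning probability phase by phase. Set $m=\lfloor b/(2a)\rfloor$ and $x=b/(2a)$. The strategy runs in $m$ phases. At the start of phase $i$, PhantomBreaker lets $U_{i-1}\subseteq V(K_n)$ be the set of vertices not yet incident to any Maker-edge, picks the phase-$i$ target $v_i$ uniformly at random from $U_{i-1}$, and then devotes all $b$ of his edges each round to claiming previously unclaimed edges incident to $v_i$. Phase $i$ ends either once every edge at $v_i$ has been claimed (in which case $v_i$ is isolated in Maker's graph as long as Maker has not touched it, so PhantomBreaker wins) or as soon as Maker claims an edge at $v_i$ (in which case PhantomBreaker aborts and starts phase $i+1$).

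Let $F_i$ be the event that Maker touches $v_i$ before PhantomBreaker completes phase $i$. Since Maker can win only if $F_1\cap\cdots\cap F_m$ holds, her winning probability is at most
\[
  \prod_{i=1}^{m}\Pr[F_i\mid F_1,\ldots,F_{i-1}],
\]
so the plan reduces to proving $\Pr[F_i\mid F_1,\ldots,F_{i-1}]\le \tfrac{1}{x-(i-1)}$ for each $i$. For this, the key estimate is that each phase occupies at most $\lceil(n-1)/b\rceil$ rounds, so Maker touches at most $(i-1)\cdot 2a\lceil(n-1)/b\rceil$ vertices before phase $i$, giving $|U_{i-1}|\ge n-(i-1)\cdot 2a\lceil(n-1)/b\rceil$, and at most $2a\lceil(n-1)/b\rceil$ additional vertices during phase $i$ itself. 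Since $v_i$ is uniformly distributed in $U_{i-1}$, I would then conclude that the conditional probability of Maker's phase-$i$ touches containing $v_i$ is bounded by $2a\lceil(n-1)/b\rceil / |U_{i-1}|$, which simplifies to $1/(x-(i-1))$ via $2ax=b$ (with a little extra care at phase boundaries to absorb the $\lceil \cdot \rceil$-slack and make the inequality hold for every $n$).

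The main obstacle will be justifying this uniform-in-$U_{i-1}$ argument against an adaptive Maker: every time Maker attempts an edge that PhantomBreaker has already claimed in phase $i$, the resulting block leaks information about $v_i$, and in principle she could use this to bias her later plays toward the target. To handle this I would prove, by a round-by-round induction within each phase, that up until Maker's first successful touch of $v_i$ her Bayesian posterior on $v_i$ stays uniform on a subset of $U_{i-1}$ of size at least $|U_{i-1}|$ minus her touches so far, so that her chance of hitting $v_i$ in each round is at most $2a$ over this size. Equivalently, this can be formalized by coupling the phase with the \emph{oblivious} version of Maker who ignores block information, and verifying that the additional knowledge of an adaptive Maker cannot exceed the counting estimate. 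Summing those per-round contributions and multiplying over phases then yields the stated product bound.
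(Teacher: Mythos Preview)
Your strategy and phase decomposition are essentially identical to the paper's, and the counting estimate is the right one. The gap is precisely in the step you flag as the main obstacle. Your claimed Bayesian invariant---that until Maker's first \emph{successful} touch of $v_i$ her posterior stays uniform on a subset of $U_{i-1}$ of size at least $|U_{i-1}|$ minus her touches so far---is false. Take $i=1$: if Maker's very first attempt is the edge $uw$ and it is blocked, then (since every PhantomBreaker edge so far is incident to $v_1$) Maker immediately learns $v_1\in\{u,w\}$, so her posterior collapses to a set of size at most $2$, not $n-2$. The coupling you sketch with an oblivious Maker does not rescue this, because an adaptive Maker who has just located $v_1$ can now concentrate all remaining attempts there.

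The obstacle is self-inflicted, and the paper removes it with one small change: abort the phase as soon as Maker \emph{attempts} any edge incident to $v_i$, successful or not (and correspondingly let $U_{i-1}$ be the vertices at which Maker has never attempted an edge). With this rule, up to the moment of that first attempt every phase-$i$ PhantomBreaker edge is incident to $v_i$ while every Maker attempt is not, so the block/no-block feedback Maker sees during phase $i$ is determined entirely by PhantomBreaker's edges from earlier phases and carries no information about $v_i$. Consequently the set of at most $2a\lfloor(n-1)/b\rfloor$ vertices Maker touches during phase $i$ is independent of the uniform choice of $v_i\in U_{i-1}$, and $\Pr[F_i\mid F_1,\dots,F_{i-1}]\le 2a\lfloor(n-1)/b\rfloor/|U_{i-1}|$ follows directly with no posterior bookkeeping at all. (If you prefer to keep your abort-on-claim rule, note that $F_i$ is contained in the event ``Maker attempts some edge at $v_i$ during phase $i$'', and the latter event admits exactly this clean analysis.)
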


When $\frac{b}{2a}$ is an integer, the bound in the previous theorem is $\left( (\frac{b}{2a})!\right)^{-1}$. We conjecture that this bound is tight; see the concluding remarks.

When $k>1$ we prove the following theorem, again by giving a randomized strategy for PhantomBreaker that wins with at least a constant probability. Again, this strategy for $k=2$ can clearly be applied as a winning strategy in the Hamiltonicity game.

\begin{theorem}\label{thm:breaker_degreek}
Let $a,b,k$ be positive integers with $b>\frac{2a}{k}$.
Then, PhantomBreaker has a randomized strategy 
to win the 
$(a:b)$ mindegree-$k$-game on $K_n$ against Maker
with probability at least
a constant, depending on $a$ and $b$.
\end{theorem}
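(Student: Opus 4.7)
The plan is to define a randomized strategy for PhantomBreaker that isolates a single random vertex and then bound Maker's degree at that vertex via Markov's inequality. Specifically, PhantomBreaker picks a vertex $v \in V(K_n)$ uniformly at random at the start of the game, and in every subsequent round he claims $b$ unclaimed edges incident to $v$ (for concreteness, in a uniformly random order) until all $n - 1$ edges at $v$ are claimed; after that his moves are arbitrary.

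Let $Y$ denote Maker's degree at $v$ in the final graph. Since Maker can no longer claim an edge at $v$ once $v$ is exhausted, $Y$ is determined within the first $T = \lceil (n-1)/b \rceil$ rounds, during which Maker makes at most $Ta$ attempts. The core estimate I would prove is
\[
\mathbb{E}[Y] \;\leq\; \frac{a}{b} + o(1),
\]
with the expectation taken over the random choice of $v$ (and over PhantomBreaker's random order). For an \emph{oblivious} Maker whose attempts are independent of $v$, this is straightforward: each attempt $e_i$ played in round $t$ satisfies
\[
\Pr\bigl[\, v \in e_i \;\text{and}\; e_i \notin \text{PhantomBreaker's claims}\,\bigr] \;=\; \frac{2}{n}\Bigl(1 - \frac{(t-1)b}{n-1}\Bigr),
\]
and summing this over the $Ta$ attempts gives $\mathbb{E}[Y] \leq a/b + o(1)$. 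Combined with the hypothesis $b > 2a/k$, Markov's inequality then yields
\[
\Pr[Y \geq k] \;\leq\; \frac{\mathbb{E}[Y]}{k} \;\leq\; \frac{a}{bk} + o(1) \;<\; \frac{1}{2},
\]
so PhantomBreaker wins with probability at least $\tfrac12 - o(1)$, a positive constant depending on $a,b,k$.

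The main obstacle is upgrading the estimate on $\mathbb{E}[Y]$ to hold against an \emph{adaptive} Maker, who can gather information from her failed attempts: a failure on an edge $\{x,y\}$ reveals that $v \in \{x,y\}$, since PhantomBreaker claims only edges incident to $v$. Once Maker identifies $v$, she can concentrate her attempts at $v$ and potentially obtain many successful edges there. Two complementary ideas should tame this: (i) by the same kind of calculation as above, the expected total number of Maker failures is itself $\leq a/b + o(1)$, so by Markov with constant probability Maker accumulates fewer than $k$ failures and hence retains too little information about $v$ to locate it reliably; (ii) the strategy can be refined so that each round PhantomBreaker also claims a few random ``decoy'' edges not incident to $v$, which dilutes the posterior probability, given a failure, that $v$ is actually an endpoint of the failed edge. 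The hardest technical step is to combine these into a uniform bound $\mathbb{E}[Y] = O(a/b)$ valid against every adaptive Maker strategy; once that is established, the Markov argument above closes the proof.
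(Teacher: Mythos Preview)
Your strategy and your oblivious-Maker calculation are fine, but the estimate $\mathbb{E}[Y] = O(a/b)$ is \emph{false} against an adaptive Maker, so the Markov step collapses. Take $a = 10$, $b = 1$, $k = 100$ (then $b > 2a/k$). Let Maker play uniformly random edges until her first failure; a short computation shows this occurs within the first $n/4$ rounds with constant probability. The failed edge pins $v$ down to two candidates, and $O(\log n)$ further probes at one of them (any failure there certifies $v$, while all-success makes the other candidate overwhelmingly likely) identify $v$ with high probability. From that point Maker spends every attempt on untried edges at $v$: after $r$ further rounds she has tried $10r$ such edges, of which at most $n/4 + r$ can be Breaker's, so she has at least $9r - n/4$ successes; taking $r = n/10$ gives $Y \geq 0.65n$. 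Hence $\mathbb{E}[Y] = \Theta(n)$, not $O(1)$. Decoy edges do not rescue the approach: diverting a constant fraction of Breaker's budget to decoys only delays the exhaustion of $v$ proportionally, giving Maker correspondingly more rounds to exploit a discovered failure.

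The paper's fix is to have PhantomBreaker \emph{forfeit upon Maker's very first failure}. Breaker then wins only on the zero-failure event, and on that event Maker's play coincides with her ``no-failure'' transcript, which is independent of $v$. One then argues directly: after $x = \lceil (n-k)/b \rceil$ rounds this transcript has total degree $2ax < (1-\varepsilon)kn$, so at least $\varepsilon n$ vertices have degree below $k$, and at least $\varepsilon n/2$ of those are additionally untouched during the final $y = \varepsilon n/(4a)$ rounds. With probability at least $\varepsilon/2$ the random $v$ is such a vertex; Maker then made fewer than $k$ attempts at $v$, each at a time when at least $by$ edges at $v$ were still free, and since Breaker claims the edges at $v$ in uniformly random order, each such attempt avoids Breaker's set with probability at least $\delta = by/n = \varepsilon b/(4a)$. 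This yields $\Prob[\text{Breaker wins}] \geq \tfrac{\varepsilon}{2}\delta^{k-1}$. Your idea (i) was aimed in this direction, but the crucial move is to condition on \emph{zero} failures rather than ``fewer than $k$'': this kills adaptivity outright and removes any need for decoys.
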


We then turn our attention to Maker's strategies. The second part of the following result implies that the bounds on $b$ in the previous two theorems are best possible.

\begin{theorem} \label{thm:maker_degree}
Let $a,b,k$ be positive integers and $n$ large enough. Maker has a randomized strategy to win the $(a:b)$ mindegree-$k$-game on $K_n$ against PhantomBreaker
\begin{enumerate}[label=(\alph*)]
    \item \label{m_deg_large_b} with probability at least a constant, depending on $a$ and $b$, when $b > \frac{2a}{k}$, and
    \item \label{m_deg_small_b} a.a.s., when $b\leq \frac{2a}{k}$.
\end{enumerate}
\end{theorem}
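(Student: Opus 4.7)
The plan is to give Maker a randomized strategy and bound her winning probability against any adversarial PhantomBreaker strategy. The first candidate is the uniform strategy, where in each round Maker picks $a$ edges uniformly at random from her untried pool; equivalently, she samples a uniformly random permutation $\pi$ of $E(K_n)$ in advance and in round $i$ attempts $\pi((i-1)a+1),\dots,\pi(ia)$. For part~\ref{m_deg_small_b} I would refine this to a greedy reinforcement: once some vertices reach Maker-degree $k$, Maker's random picks are restricted to edges incident to vertices with the currently smallest Maker-degree (using only her own visible graph as feedback, which is all the information she has).

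For a fixed vertex $v$, an exchange argument reduces PhantomBreaker's worst-case attack on the event $\{d_M(v)<k\}$ to the focused strategy, which dedicates all $b$ picks per round to currently available edges of $E_v$: swapping a non-$E_v$ Breaker-pick for an $E_v$-pick cannot decrease $d_M(v)$. Under focused PhantomBreaker, $E_v$ is exhausted after roughly $T\approx(n-1)/b$ rounds. Within this window, Maker's attempts at $v$ under the uniform strategy form a hypergeometric sample of expected size $\sim 2a/b$, and an attempt made in round $r$ succeeds with probability $\sim 1-b(r-1)/(n-1)$. A Chernoff/Azuma-type concentration on the number of Maker's successful attempts then yields the limiting distribution of $d_M(v)$ at the end of the focused attack, essentially a Poisson tail with parameter $\Theta(a/b)$.

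For part~\ref{m_deg_large_b}, $b>\tfrac{2a}{k}$: PhantomBreaker's budget only allows effective focused attacks on a bounded number of vertices, since any attack rate strictly below $a/k$ is ineffective by the race analysis. At each attacked vertex Maker achieves $d_M(v)\ge k$ with a positive constant probability, and the non-attacked vertices have $d_M(v)=\Theta(n)\gg k$ almost surely; combining gives Maker's positive constant overall win probability. For part~\ref{m_deg_small_b}, $b\le\tfrac{2a}{k}$: the uniform strategy alone only yields constant-probability success at the threshold, so the greedy refinement is essential. Once vertices not attacked by PhantomBreaker have reached Maker-degree $k$ during the initial near-uniform phase, Maker's picks concentrate on the small remaining set of low-degree vertices, which must contain any PhantomBreaker-attacked $v^*$. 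I would show that Maker's claim-rate at $v^*$ then dominates PhantomBreaker's, driving $d_M(v^*)$ well above $k$ before $E_{v^*}$ is exhausted; a union bound over $v$ gives the a.a.s.\ conclusion.

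The main obstacle is part~\ref{m_deg_small_b}: the feedback loop between Maker's degree-based focus and PhantomBreaker's adaptive attack creates a non-trivial joint process, and the argument must handle the transition between the initial near-uniform phase (during which not-yet-reinforced vertices are at risk) and the later focused phase (where Maker's picks concentrate on PhantomBreaker's target). A direct martingale/Chernoff argument tracking $d_M(v^*)$, together with a coupling that compares the refined strategy to a simpler one-vertex race, is expected to close this step.
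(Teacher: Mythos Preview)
Your part~\ref{m_deg_large_b} argument has a genuine gap. The claim that ``PhantomBreaker's budget only allows effective focused attacks on a bounded number of vertices'' is false for the uniform strategy. Each focused attack on a fresh vertex takes roughly $n/b$ rounds; after $t$ such attacks Maker has attempted about $atn/b$ edges, and the expected number of Maker-untouched vertices is still about $n e^{-2at/b}$. Thus Breaker can sequentially attack $\Theta(\log n)$ vertices, each time adaptively selecting one Maker has not yet touched. At every attacked vertex your own race analysis gives Maker success probability at most some constant $p<1$, so her overall win probability is at most $p^{\Theta(\log n)}=o(1)$, not a constant. The exchange argument you describe bounds failure at a \emph{single} vertex; it says nothing about how many vertices Breaker can target over the whole game, and the game lasts $\Theta(n^2)$ rounds under uniform play.

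The paper avoids this by making Maker finish fast. Her strategy processes vertices one at a time: pick $v$ uniformly from the unprocessed set, claim random edges at $v$ until $d_M(v)\ge k$, repeat. This terminates in $O(n)$ rounds, so Breaker's total budget is $O(n)$ edges, and therefore only $O(1)$ vertices can ever acquire Breaker-degree $\Omega(n)$. The analysis then splits into $1/\varepsilon$ phases of $\varepsilon n$ vertices each; in each phase Maker a.a.s.\ succeeds quickly, and with probability at least $\varepsilon^{O(1)}$ she happens to pick the few newly dangerous vertices in the very next phase. Multiplying over phases gives the constant lower bound.

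For part~\ref{m_deg_small_b} you concede the argument is incomplete, and the missing idea is again structural rather than a martingale estimate. The paper's strategy has Maker repeatedly pick a uniform random \emph{pair} $v,w$ from the current set of sub-degree-$k$ vertices and attempt the edge $vw$; this both raises two degrees at once and forces Stage~I to finish within $kn/(2a)+o(n)$ rounds, so Breaker claims at most $bkn/(2a)\le n$ edges and can create at most \emph{one} vertex of Breaker-degree $\ge n/2$. A short argument then shows that this single dangerous vertex a.a.s.\ leaves $V_{<k}$ before Stage~I ends (by symmetry with the many untouched vertices), after which a brief Stage~II cleans up the remaining $n/\log n$ vertices. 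Your greedy refinement may well work, but the transition analysis you flag as the obstacle is exactly where one needs this ``at most one dangerous vertex'' fact, and without it the coupling you propose has no anchor.
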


Next, as it has already been observed for the classical Maker-Breaker games as well as their RandomMaker-RandomBreaker version and their half-random versions, it turns out that the ``bottleneck'' for occupying a perfect matching or a spanning tree is winning the mindegree-1 game as the thresholds coincide.

\begin{theorem}\label{thm:maker_matching}
Let $a,b$ be positive integers and let $n$ be large enough. Maker has a randomized strategy to win the $(a:b)$  perfect matching game on $K_n$ against PhantomBreaker
\begin{enumerate}[label=(\alph*)]
    \item \label{match_large_b} with probability at least
a constant, depending on $a$ and $b$, when $b > 2a$, and
    \item \label{match_small_b} a.a.s., when $b\leq 2a$.
\end{enumerate}

\end{theorem}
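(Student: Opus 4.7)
I will prove part \ref{match_small_b} first; part \ref{match_large_b} then follows by running the same strategy inside the regime of Theorem \ref{thm:maker_degree}\ref{m_deg_large_b}, which replaces the a.a.s.\ guarantees below by guarantees with probability bounded away from $0$. The overall idea is to reduce the perfect-matching game to the mindegree-$1$ game from Theorem \ref{thm:maker_degree} and then use Maker's spare capacity---she claims a positive fraction of $E(K_n)$ since $a,b$ are constants---to upgrade a spanning star-forest into a perfect matching.

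Maker plays two interleaved routines. In \emph{Routine D} (defensive), using most of her bias, she follows the mindegree-$1$ strategy from Theorem \ref{thm:maker_degree}\ref{m_deg_small_b}, possibly with a slightly reduced effective bias $a'<a$. In \emph{Routine R} (random), she uses the rest of her bias to claim a uniformly random unclaimed edge; for $a\geq 2$ this can be done by dedicating one of her $a$ moves per round to R, and for $a=1$ by flipping a biased coin each round to decide which routine to run. Routine D guarantees $\delta(M)\geq 1$ a.a.s., once one checks that Theorem \ref{thm:maker_degree}\ref{m_deg_small_b} tolerates the reduction from $a$ to $a'$. Routine R produces a subgraph $R\subseteq M$ of constant edge density $p=p(a,b)>0$, and I expect $R$ to behave, up to a negligible correction, like an Erd\H{o}s--R\'enyi random graph $G(n,p)$.

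The main analytical step is then a verification of Tutte's condition $\mathrm{odd}(M-S)\leq|S|$ for all $S\subseteq V(K_n)$. For $|S|\geq n/2$ this is automatic. For $1\leq|S|<n/2$, the standard expansion of $G(n,p)$ (for constant $p$) implies that $R-S$ is a.a.s.\ connected; transporting this fact to $R$ in place of a genuine $G(n,p)$ yields $\mathrm{odd}(M-S)\leq 1\leq|S|$. For $|S|=0$, the same connectivity of $R$ together with $n$ even gives $\mathrm{odd}(M)=0$, provided that no isolated vertex exists in $M$, which is exactly what Routine D ensures.

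The main obstacle I anticipate is the coupling of Routine R to a genuine $G(n,p)$: PhantomBreaker can adapt her moves to those that Maker has \emph{attempted}, so Maker's random edges are drawn from a distribution that depends on PhantomBreaker's play, and may concentrate on or avoid particular vertices. I would resolve this by a standard sprinkling argument producing a coupling under which $R$ contains, as a subgraph, a truly independent $G(n,q)$ for some constant $q>0$; this is then enough for all the random-graph estimates above. A secondary concern is the tight case $b=2a$ of Theorem \ref{thm:maker_degree}\ref{m_deg_small_b}: here there is little slack in the bias, so one must verify that the defensive routine still delivers $\delta(M)\geq 1$ a.a.s.\ after losing a constant fraction of moves to Routine R, which should follow by carefully tracking the constants in the proof of Theorem \ref{thm:maker_degree}.
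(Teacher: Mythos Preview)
Your proposal has two concrete gaps that are not just matters of care.

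\textbf{The sprinkling claim is false.} PhantomBreaker sees all of Maker's moves, so he can concentrate on a single vertex $v$ and claim all $n-1$ incident edges within the first $\lceil (n-1)/b\rceil$ rounds. In that window Routine~R makes only $O_{a,b}(1)$ attempts incident to $v$ in expectation (each random edge hits $v$ with probability $\approx 2/n$), so with probability bounded away from $0$---indeed close to $1$ if the R-fraction is small---no R-edge at $v$ is ever successfully claimed. Then $v$ is isolated in $R$, and no coupling can embed a genuine $G(n,q)$ into $R$. PhantomBreaker can repeat this at several vertices. Your Tutte verification then collapses: you use ``$R-S$ is connected'' for $|S|\geq 1$, which is simply false once $R$ has isolated vertices; and if two $R$-isolated vertices $v_1,v_2$ share their unique $D$-neighbour $w$ (mindegree-$1$ alone does not forbid this), then $S=\{w\}$ already gives $\mathrm{odd}(M-S)\geq 2>|S|$.

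\textbf{The tight case $b=2a$ does not survive siphoning.} The proof of Theorem~\ref{thm:maker_degree}\ref{m_deg_small_b} relies on Breaker having at most $n-\Theta(n/\log n)$ edges by the end of Stage~I; the slack is only $n/\log n$, not $\Theta(n)$. Diverting any constant fraction of Maker's moves to Routine~R pushes Breaker past $n$ edges during the defensive stage, at which point he can fully surround a vertex and $\delta(D)\geq 1$ fails with positive probability. This is not repairable by ``tracking constants.''

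The paper takes a different route and avoids both issues. Maker builds the matching directly: in each step she picks two random unmatched vertices $x_1,x_2$ and tries $x_1x_2$; on a failure she uses a short augmenting-path trick through an existing matched pair to absorb $x_1$ and $x_2$ anyway, and a separate final stage handles the last $n^{0.7}$ unmatched vertices via paths of length five. For $b\leq 2a$ the number of failures in the main stage is sublinear, yielding the a.a.s.\ result; for $b>2a$ the same strategy is combined with a phase decomposition (as in Theorem~\ref{thm:maker_degree}\ref{m_deg_large_b}) to get a constant lower bound on the success probability. There is no black-box reduction to mindegree-$1$ and no random-graph coupling.
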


\begin{theorem} \label{thm:maker_connectivity}
Let $a,b$ be positive integers and $n$ large enough. Maker has a randomized strategy to win the $(a:b)$ connectivity game on $K_n$ against PhantomBreaker
\begin{enumerate}[label =(\alph*)]
    \item \label{conn_large_b} with probability at least a constant, depending on $a$ and $b$, when $b> 2a$, and
    \item \label{conn_small_b} a.a.s., when $b\leq 2a$.
\end{enumerate}
\end{theorem}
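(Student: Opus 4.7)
The plan is to upgrade the output of Theorem~\ref{thm:maker_matching} from ``no isolated vertex'' to ``connected''. Once Maker achieves minimum degree at least $1$, which Theorem~\ref{thm:maker_matching} provides with a constant probability for part~(a) and a.a.s.\ for part~(b), the only remaining obstruction to connectivity is a partition $(S, V\setminus S)$ with $2\le |S|\le n/2$ in which no Maker edge crosses. I would therefore have Maker run, in parallel within her $a$ moves per round, the matching-type covering sub-strategy from Theorem~\ref{thm:maker_matching} together with a defensive ``connecting'' sub-strategy. In the latter, Maker uses her own graph (which she observes completely) to identify a vertex lying in a small component of her current graph and makes a uniformly random attempt at an edge from such a vertex into the rest of $V$; failed attempts simply reveal PhantomBreaker edges and are followed by fresh resampling.

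I would then carry out the proof in three steps. First, verify that after splitting her $a$ moves between the two sub-strategies, the covering part still meets the threshold $b\le 2a$ of Theorem~\ref{thm:maker_matching}; since $a$ is a constant, this costs at most a multiplicative constant in the success probability, which is absorbed in the ``constant depending on $a$ and $b$'' of part~(a) and is harmless for the a.a.s.\ statement of part~(b). Second, establish that for every fixed set $S$ with $s=|S|$, $2\le s\le n/2$,
\[
\Pr\bigl[\text{no Maker edge crosses }(S, V\setminus S)\bigr]\le \exp(-c\,s\,n),
\]
for some $c=c(a,b)>0$, using that $S$ has $s(n-s)\ge sn/2$ potential crossing edges and that any component that appears small in Maker's own graph attracts the defensive attempts. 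Third, apply a union bound
\[
\sum_{s=2}^{n/2}\binom{n}{s}\exp(-c s n)\le \sum_{s\ge 2}\Bigl(\tfrac{en}{s}\,e^{-cn}\Bigr)^{s}=o(1),
\]
and combine with the mindegree-$1$ event to conclude.

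The main obstacle is the second step. A purely uniform Maker is insufficient for the bound above: PhantomBreaker, using her $b$ picks per round, can claim all $2(n-2)$ edges between $\{u,v\}$ and $V\setminus\{u,v\}$ for a pre-chosen pair $\{u,v\}$ before Maker reaches any of them with a positive constant probability, and iterating this attack over linearly many disjoint pairs would cost Maker connectivity a.a.s. The role of the defensive sub-strategy is precisely to defeat this: PhantomBreaker can maintain such a ``race'' on only one small component at a time, whereas Maker's defensive attempts are automatically directed at whichever potential bad set $S$ has actually started forming in her own graph, and the resulting concentration of Maker attempts on crossings of $S$ converts the constant-probability failure of the naive strategy into the exponentially small bound required for the union bound to close.
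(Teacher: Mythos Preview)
Your proposal has two genuine gaps that prevent it from going through.

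\textbf{The bound $\exp(-csn)$ is unachievable.} The entire game lasts $O(n/a)$ rounds, so Maker makes $O(n)$ edge attempts in total. The probability that none of them crosses a fixed cut $(S,V\setminus S)$ can therefore never be smaller than $\exp(-\Theta(n))$, regardless of how cleverly the defensive sub-strategy is aimed; you cannot get an extra factor of $s$ in the exponent without $\Omega(sn)$ attempts. With only $\exp(-cn)$ available, the union bound does not close: for $s$ near $n/2$ you are summing $\binom{n}{n/2}e^{-cn}\approx 2^{n}e^{-cn}$, which is not $o(1)$ unless you control the constant $c$ very tightly, and that constant shrinks as $b/a$ grows. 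The observation that $S$ has $sn/2$ crossing edges is about the size of the target, not about the number of arrows Maker gets to shoot. A union-bound-over-cuts framework is the wrong analytic tool here; you must instead track Maker's actual component structure.

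\textbf{Splitting the $a$ moves kills part~(b) at the threshold.} If $b=2a$ and you give the matching sub-strategy only $a'<a$ moves per round, it is now playing an $(a':b)$ game with $b>2a'$, where Theorem~\ref{thm:maker_matching} only guarantees a constant success probability, not an a.a.s.\ one. Your remark that this ``costs at most a multiplicative constant'' is exactly what you cannot afford in part~(b).

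The paper avoids both problems by abandoning the union bound entirely. For part~(a) it runs a pure component-merging strategy (pick a random vertex, attempt a random edge out of its component) and analyses it by classifying components as \emph{annoying} or \emph{dangerous} based on Breaker's degree sums, showing deterministically that dangerous components cannot arise once certain colouring events occur. For part~(b) it works \emph{sequentially}: first spend $\frac{n}{2a}+o(n)$ rounds to get a perfect matching (Remark after Theorem~\ref{thm:maker_matching}), then merge size-$2$ components pairwise, then merge the remaining $O(n/4)$ components, exploiting that $e(B)\le 2.1n$ so every cut of interest has a constant fraction of free edges. The sequential structure is exactly what lets the full bias $a$ be available to the matching phase at the threshold.
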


Finally, and also similarly 
to the already discussed variants
of Maker-Breaker games, it turns out that, in terms of the threshold bias ratio,
the ``bottleneck'' for occupying a Hamilton cycle is winning the mindegree-2 game. 

\begin{theorem} \label{thm:maker_hamilton}
Let $a,b$ be constants and $n$ large enough. 
Maker has a randomized strategy to win the 
$(a:b)$ Hamiltonicity game on $K_n$ against PhantomBreaker
\begin{enumerate}[label=(\alph*)]
    \item \label{ham_large_b} with probability at least
a constant, depending on $a$ and $b$, when $b > a$, and 
    \item \label{ham_small_b} a.a.s., when $b\leq a$.
\end{enumerate}
\end{theorem}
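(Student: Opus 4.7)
Maker's strategy is the fully random one: in each of the $\lfloor\binom{n}{2}/(a+b)\rfloor$ rounds she selects $a$ unclaimed edges uniformly at random. Let $M$ denote Maker's final graph and $B$ that of PhantomBreaker. Since this is the same strategy underlying Theorem~\ref{thm:maker_degree}, we may invoke it with $k=2$ to conclude that $\delta(M)\geq 2$ a.a.s.\ when $b\leq a$ and with at least a positive constant probability when $b>a$. The task is to lift this minimum-degree property to Hamiltonicity.

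I would use the classical P\'osa rotation-extension framework in the form employed in numerous positional-game papers on Hamiltonicity: there is an absolute constant $c>0$ such that every graph $G$ on $n$ vertices satisfying $\delta(G)\geq 2$ together with the $(n/4,2)$-\emph{expansion} property (every $S\subseteq V(G)$ with $|S|\leq n/4$ has $|N_G(S)\setminus S|\geq 2|S|$) is either Hamiltonian or admits at least $cn^2$ \emph{boosters}, i.e., edges $e\notin E(G)$ whose addition strictly lengthens a longest path or closes a Hamilton cycle. I would accordingly split Maker's rounds into a ``bulk'' phase, consisting of all but the last $\omega(\log n)$ rounds, and a short ``booster'' phase. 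The bulk phase secures $\delta(M)\geq 2$ via Theorem~\ref{thm:maker_degree} and the expansion property via a union bound; the booster phase then iteratively augments Maker's graph into a Hamilton cycle in at most $n$ steps.

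Verifying expansion in the bulk phase is a standard union bound. For each $s\leq n/4$ and each disjoint pair $(S,T)$ with $|S|=s$ and $|T|=2s-1$, the bad event is that no edge of $M$ lies between $S$ and $V\setminus(S\cup T)$; since $M$ contains $\Theta(n^2)$ random edges and $|V\setminus(S\cup T)|=\Omega(n)$, this event has probability $e^{-\Theta(sn)}$, which comfortably beats the $n^{O(s)}$ count of such pairs and yields $o(1)$ after summation over $s$. The booster phase is equally standard: at each of the at most $n$ augmentation steps the current Maker graph either already has a Hamilton cycle or contains $\Omega(n^2)$ boosters, so the next random round contains a booster with probability $1-e^{-\Omega(1)}$, and $\omega(\log n)$ rounds easily suffice.

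The main obstacle is that $M$ is \emph{not} a uniformly random subset of $E(K_n)$, because PhantomBreaker's adaptive choices correlate $B$ with Maker's randomness, so the union bounds above cannot be applied naively. The standard remedy, which I would adopt, is to couple Maker's play with a uniformly random permutation $\pi$ of $E(K_n)$, Maker claiming edges in the order of $\pi$ and simply skipping those that turn out to lie in $B$. Under this coupling, one verifies that for every realization of $B$ (noting $|E(B)|\leq b\binom{n}{2}/(a+b)$), $E(M)$ is conditionally distributed as a uniformly random subset of $E(K_n)\setminus E(B)$ of the appropriate size, and since $|E(K_n)\setminus E(B)|=\Theta(n^2)$ and $|E(M)|=\Omega(n^2)$, the expansion and booster estimates above go through unchanged, delivering the probability bounds claimed in parts \ref{ham_large_b} and \ref{ham_small_b}.
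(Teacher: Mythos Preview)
Your proposal has a fatal gap: the fully random strategy does \emph{not} achieve minimum degree~$2$ against an adaptive PhantomBreaker, and neither of your two justifications for it is valid.

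First, Theorem~\ref{thm:maker_degree} is not proved via the fully random strategy. Inspecting its proof, Maker picks a random vertex $v$, then repeatedly attempts random edges \emph{incident to~$v$} until $d_M(v)\ge k$, and only then moves on; the analysis splits into phases and crucially exploits that Maker concentrates on vertices in a controlled order. You therefore cannot invoke that theorem for a strategy that chooses edges uniformly from all of~$E(K_n)$.

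Second, your coupling claim is false. PhantomBreaker sees Maker's attempted edges and reacts to them, so $B$ is a (random) function of your permutation~$\pi$; conditioning on $B$ does not leave $M$ uniform on $E(K_n)\setminus B$. Concretely, take $a=b=1$ and let Breaker's strategy be: in round~$i$, claim the $i$-th edge incident to a fixed vertex~$v$ (if still free). Maker gets the edge $e_j$ at~$v$ exactly when $\pi^{-1}(e_j)\le j$, which has probability $j/\binom{n}{2}$; summing over $j\le n-1$ gives $\mathbb{E}[d_M(v)]\approx 1$, so $d_M(v)<2$ with probability bounded away from~$0$. This single calculation already kills part~\ref{ham_small_b} for the fully random strategy, and the situation only worsens for $b>a$. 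The expansion and booster estimates you sketch are fine for a genuinely uniform random graph, but that is not what Maker produces here.

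The paper's proof takes a completely different, constructive route. Maker grows a Hamilton path one vertex at a time: she picks a random vertex $x\in V_0$ and tries to attach it to the ``older'' endpoint~$y$ of the current path. When this direct extension fails, she builds two random stars of size $\Theta(\log n\cdot\log\log n)$ at $x$ and~$y$ and then searches among the induced rotation-type rewirings for a free edge that extends the path. A phase decomposition analogous to those in Theorems~\ref{thm:maker_degree} and~\ref{thm:maker_matching} tracks vertices whose Breaker degree becomes large, and the constant winning probability in~\ref{ham_large_b} comes from the probability that such vertices are absorbed into the path soon after they appear. Part~\ref{ham_small_b} reuses the same strategy with a sharper round count that forces the unique potentially dangerous vertex into the path a.a.s. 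The key point is that Maker's attention is always focused on the current endpoints, so PhantomBreaker cannot quietly isolate a vertex as in the counterexample above.
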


\bigskip

\noindent\textbf{Organization.}
In Section~\ref{sec:prelim}, we present some probabilistic tools that are used repeatedly in later sections. In Section~\ref{sec:breaker.strategies} we analyse strategies
for PhantomBreaker, thus proving Theorem~\ref{thm:breaker_degree1} and Theorem~\ref{thm:breaker_degreek}.
Finally, in Section~\ref{sec:maker.strategies} we analyse the Maker's strategies, and prove all the remaining theorems. We finish with some concluding remarks in Section~\ref{sec:concluding}.

\bigskip

\noindent\textbf{Notation.}
In a game on $K_n$ that is in progress, we let $M$, $B$, and $F$ denote the subgraphs of $K_n$ on the same vertex set consisting of Maker's, Breaker's, and the unclaimed edges, respectively.
For any graph $G\in\{K_n,M,B,F\}$, any vertex $v$ and any subsets $X,Y$ of the vertex set, we use the following notation:
$V(G)$ and $E(G)$ are the vertex set and the edge set of $G$, and $e(G):=|E(G)|$.
We write $E_G(X,Y)$ for the set of edges between $X$ and $Y$ in $G$, and set $e_G(X,Y)=|E_G(X,Y)|$,
$d_G(v,X)=e_G(\{v\},X)$, and $d_G(v)=d_G(v,V(G))$. Moreover, $G[X]$ denotes the subgraph of $G$ induced
on the vertex set $X$. In the case when $G=K_n$ we usually neglect the subscript $G$ in the above notation.

In most of our proofs we do not intend to optimize constants. Whenever these constants are not crucial we may also use the standard Landau notation. That is, given functions $f,g:\mathbb{N}\rightarrow \mathbb{R}$,
we write $f(n)=o(g(n))$ if $\lim_{n\rightarrow\infty} \left| f(n)/g(n) \right|=0$,
and $f(n)=\omega(g(n))$ if $\lim_{n\rightarrow\infty} \left| f(n)/g(n) \right| =\infty$.
Moreover, we write
$f(n)=O(g(n))$ if $\limsup_{n\rightarrow\infty} \left| f(n)/g(n) \right|<\infty$,
and $f(n)=\Omega(g(n))$ if $\liminf_{n\rightarrow\infty} \left| f(n)/g(n) \right| > 0$,
and we write $f(n)=\Theta(g(n))$ if both of the previous conditions are satisfied.
If an event, depending on $n\in\mathbb{N}$,
holds with probability tending to $1$
as $n$ tends to infinity, we say that the event holds asymptotically almost surely (a.a.s.).

\bigskip

%%%%%%%%%%%%%%%%%%%%%%%%%%%%%%%%%%%%%%%%%%%%%%%%%%%%%
%%
%%   Preliminaries
%%
%%%%%%%%%%%%%%%%%%%%%%%%%%%%%%%%%%%%%%%%%%%%%%%%%%%%%

\section{Preliminaries}\label{sec:prelim}

We make use of the following standard probability bounds, see e.g.~\cite{janson2011random}.

\begin{lemma}[Markov inequality]\label{lem:markov}
Let $X\geq 0$ be a random variable. For every $t > 0$ it holds that
$$
\Prob\left( X\geq t \right)
\leq \frac{\mathbb{E}(X)}{t}~ .
$$
\end{lemma}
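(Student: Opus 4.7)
The plan is to give the standard textbook proof based on dominating $X$ pointwise by a scaled indicator of the event of interest. First I would introduce the indicator random variable $\mathbf{1}_{\{X \geq t\}}$, which takes value $1$ when $X \geq t$ and $0$ otherwise. The central observation is the pointwise inequality
$$
X \;\geq\; t \cdot \mathbf{1}_{\{X \geq t\}}~,
$$
which I would verify by splitting into two cases on any given sample point: on $\{X \geq t\}$, the right-hand side equals $t$ while the left-hand side is at least $t$ by definition of the event; on the complement $\{X < t\}$, the right-hand side is $0$ while the left-hand side is nonnegative by the assumption $X \geq 0$. It is exactly at the second case that the nonnegativity hypothesis enters, and without it the inequality (and the whole lemma) fails, as one sees by considering any zero-mean variable with heavy negative tail.

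Next I would take expectations of both sides and use linearity together with the identity $\mathbb{E}\bigl[\mathbf{1}_{\{X \geq t\}}\bigr] = \Prob(X \geq t)$, yielding
$$
\mathbb{E}(X) \;\geq\; t \cdot \Prob(X \geq t)~.
$$
Since the hypothesis stipulates $t > 0$, dividing through by $t$ gives the desired bound
$\Prob(X \geq t) \leq \mathbb{E}(X)/t$.

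There is really no substantive obstacle here: the only technical point worth noting is that the argument is valid whether $\mathbb{E}(X)$ is finite or infinite, since in the latter case the inequality is trivial, and the pointwise domination step is purely algebraic and requires no integrability assumption beyond what is needed to make $\mathbb{E}(X)$ well-defined (which is automatic for $X \geq 0$ if one allows the value $+\infty$).
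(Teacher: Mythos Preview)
Your proof is correct and is the standard textbook argument. The paper itself does not prove this lemma at all; it merely states it as a well-known bound with a reference to~\cite{janson2011random}, so there is no paper proof to compare against.
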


\begin{lemma}[Chernoff inequality]\label{lemma:Chernoff_standard}
If $X \sim Bin(n,p)$, then
\begin{itemize}
    \item[(a)] $\Prob[X<(1-\delta)np]<\exp\left(-\frac{\delta^2np}{2}\right)$ for every $\delta>0.$
    \item[(b)] $\Prob[X>(1+\delta)np]<\exp\left(-\frac{np}{3}\right)$ for every $\delta\geq 1.$
    \item[(c)] $\Prob[X>k]<\exp\left(-k\right)$ for every $k\geq 7np$.
\end{itemize}
\end{lemma}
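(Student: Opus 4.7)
The plan is to derive all three bounds by the standard Chernoff--Cram\'er exponential moment method. Writing $X=\sum_{i=1}^n X_i$ with independent $X_i\sim\mathrm{Bernoulli}(p)$, the moment generating function factors as $\mathbb{E}[e^{tX}]=(1-p+pe^t)^n$, and the elementary bound $1+x\leq e^x$ gives $\mathbb{E}[e^{tX}]\leq \exp\bigl(np(e^t-1)\bigr)$ for every $t\in\mathbb{R}$. Applying Lemma~\ref{lem:markov} to the non-negative variable $e^{tX}$ then yields, for any real $a$,
$$
\Prob(X\geq a)\leq e^{-ta}\,\mathbb{E}[e^{tX}] \text{ for } t>0,\qquad \Prob(X\leq a)\leq e^{-ta}\,\mathbb{E}[e^{tX}] \text{ for } t<0.
$$
All three parts are now a matter of choosing the right $t$ and simplifying the resulting exponent.

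For part (a), I would set $t=\ln(1-\delta)$ (assuming $0<\delta<1$; the range $\delta\geq 1$ is trivial since the probability on the left is zero), producing the bound
$$\Prob\bigl(X<(1-\delta)np\bigr)\leq \exp\bigl(np\cdot\phi_{-}(\delta)\bigr),\qquad \phi_{-}(\delta):=-\delta-(1-\delta)\ln(1-\delta).$$
It then suffices to check the one-variable inequality $\phi_{-}(\delta)\leq -\delta^2/2$ on $(0,1)$. This follows because $\phi_{-}(\delta)+\delta^2/2$ vanishes at $\delta=0$, has derivative $\ln(1-\delta)+\delta$ which also vanishes at $0$, and has second derivative $-\delta/(1-\delta)\leq 0$; hence it is non-positive throughout $[0,1)$.

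For part (b), choosing $t=\ln(1+\delta)>0$ gives the analogous bound
$$\Prob\bigl(X>(1+\delta)np\bigr)\leq \exp\bigl(np\cdot\phi_{+}(\delta)\bigr),\qquad \phi_{+}(\delta):=\delta-(1+\delta)\ln(1+\delta).$$
Since $\phi_{+}'(\delta)=-\ln(1+\delta)<0$ for $\delta>0$, $\phi_{+}$ is decreasing on $[0,\infty)$, and $\phi_{+}(1)=1-2\ln 2<-1/3$; hence $\phi_{+}(\delta)\leq -1/3$ whenever $\delta\geq 1$, which proves (b). For part (c), write $k=(1+\delta)np$ with $\delta\geq 6$. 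The same bound gives exponent $np\,\phi_{+}(\delta)$, so it suffices to show $\phi_{+}(\delta)\leq -(1+\delta)$, equivalently $\ln(1+\delta)\geq 1+\delta/(1+\delta)$. At $\delta=6$ this reduces to $\ln 7\geq 13/7$, which holds since $\ln 7>1.945>1.858>13/7$. Moreover $\frac{d}{d\delta}\bigl[\ln(1+\delta)-\delta/(1+\delta)\bigr]=\delta/(1+\delta)^2>0$, so the inequality persists for all $\delta\geq 6$, yielding $\Prob(X>k)\leq e^{-k}$.

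The argument is conceptually routine; the only obstacle is the scalar calculus needed to show that the exponents produced by the optimal exponential tilt are dominated by the clean constants $-\delta^2/2$, $-1/3$, and $-1$ stated in the lemma. All three verifications reduce to the elementary monotonicity arguments sketched above, so there is no genuine difficulty beyond bookkeeping.
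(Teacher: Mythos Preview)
Your proof is correct and follows the standard Chernoff--Cram\'er exponential moment method. The paper does not actually prove this lemma; it treats it as a well-known tool and merely cites a reference (e.g.~\cite{janson2011random}). So there is nothing to compare: you have supplied a complete, self-contained derivation where the paper opted to quote. The calculus verifications you sketch for the three exponents $-\delta^2/2$, $-1/3$, and $-1$ are all valid, and the edge cases ($\delta\geq 1$ in part~(a), $p=0$) are handled correctly.
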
 

Moreover, we make use of the following simple modification of the above Chernoff inequalities.

\begin{lemma}\label{lemma:Chernoff_modified}
Let $X_1,X_2,\ldots,X_n$ be a sequence of
$n$ Bernoulli random variables 
$X_i\sim Ber(p_i)$ where the $i$-th 
Bernoulli experiment is performed
after and hence independently of
the previous experiment, but where
$p_i$ may depend on the outcome
of these previously performed Bernoulli experiments,
for every $i\in [n]$.
Let $X = X_1+X_2+\ldots+X_n$.
\begin{itemize}
    \item[(a)] If $p_i\geq p$ for every $i\in [n]$, then
    $\Prob[X<(1-\delta)np]<\exp\left(-\frac{\delta^2np}{2}\right)$ for every $\delta>0.$
    \item[(b)] If $p_i\leq p$ for every $i\in [n]$, then 
    $\Prob[X>(1+\delta)np]<\exp\left(-\frac{np}{3}\right)$ for every $\delta\geq 1.$
    \item[(c)] If $p_i\leq p$ for every $i\in [n]$, then 
    $\Prob[X>k]<\exp\left(-k\right)$ for every $k\geq 7np$.
\end{itemize}
\end{lemma}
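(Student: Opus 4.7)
The plan is to reduce each of the three statements to the corresponding bound in Lemma~\ref{lemma:Chernoff_standard} via a standard coupling argument, which shows that $X$ stochastically dominates, or is dominated by, a genuine $\mathrm{Bin}(n,p)$ random variable. Let $U_1, U_2, \ldots, U_n$ be independent random variables uniformly distributed on $[0,1]$, to be thought of as driving the sequential experiments. Recursively define $X_i := \mathbf{1}[U_i \leq p_i]$, where $p_i$ is the prescribed (random) function of $X_1, \ldots, X_{i-1}$; by construction this realisation has the correct joint distribution.

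For part (a), set $Z_i := \mathbf{1}[U_i \leq p]$. Since the $U_i$ are i.i.d., the $Z_i$ are i.i.d.\ $\mathrm{Ber}(p)$, and hence $Z := \sum_{i=1}^n Z_i \sim \mathrm{Bin}(n,p)$. On every outcome, the assumption $p_i \geq p$ yields $X_i \geq Z_i$ pointwise, so $X \geq Z$; consequently
$$
\Prob\bigl(X < (1-\delta)np\bigr) \leq \Prob\bigl(Z < (1-\delta)np\bigr),
$$
and the right-hand side is bounded by $\exp(-\delta^2 np/2)$ by Lemma~\ref{lemma:Chernoff_standard}(a). Parts (b) and (c) are fully symmetric: under $p_i \leq p$, the same coupling forces $X_i \leq Z_i$ and hence $X \leq Z \sim \mathrm{Bin}(n,p)$, so the upper-tail bounds of Lemma~\ref{lemma:Chernoff_standard}(b),(c) transfer directly.

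The only point that requires any care is to verify that $p_i$, viewed as a function of $X_1, \ldots, X_{i-1}$, is $\sigma(U_1, \ldots, U_{i-1})$-measurable, which is immediate from the recursive construction. There is no real obstacle here. An alternative route, should one prefer to avoid coupling entirely, would be to condition on $X_1, \ldots, X_{i-1}$ inside the moment generating function $\mathbb{E}[e^{tX}]$, observe that $\mathbb{E}[e^{tX_i} \mid X_1, \ldots, X_{i-1}]$ is monotone in $p_i$, and then repeat the telescoping argument underlying the proof of Lemma~\ref{lemma:Chernoff_standard}; but the coupling route is cleaner.
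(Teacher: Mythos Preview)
Your proof is correct and follows essentially the same approach as the paper: a standard coupling argument that reduces to a genuine $\mathrm{Bin}(n,p)$ random variable and then invokes Lemma~\ref{lemma:Chernoff_standard}. The only cosmetic difference is in the coupling construction itself---the paper thins each $X_i$ (keeping it with probability $p/p_i$) to produce $Y_i\sim\mathrm{Ber}(p)$, whereas you drive both $X_i$ and $Z_i$ from a common uniform $U_i$---but these are equivalent standard devices.
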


\begin{proof}
The lemma follows from Lemma~\ref{lemma:Chernoff_standard}
by a standard coupling argument. As the proofs of (a), (b) and (c) are very similar, we only give the details for (a).
While the $n$ Bernoulli experiments are preformed,
we additionally define random variables $Y_i$
for every $i\in [n]$ as follows:
If the $i$-th Bernoulli experiment is a failure, i.e.~$X_i=0$,
then we set $Y_i=0$. Otherwise, when $X_i=1$,
we let $Y_i=1$ with probability $p/p_i$ and
$Y_i=0$ with probability $1-p/p_i$.
This way, we obtain $X_i\geq Y_i$ 
and $\Prob(Y_i=1)=p$ for every $i\in [n]$, and hence
$X\geq Y_1+Y_2+\ldots+Y_n =:Y \sim Bin(n,p)$.
In particular, using Lemma~\ref{lemma:Chernoff_standard},
we obtain
$
\Prob[X<(1-\delta)np]
\leq
\Prob[Y<(1-\delta)np]
<
\exp(-\delta^2np/2)
$.
\end{proof}

\bigskip

%%%%%%%%%%%%%%%%%%%%%%%%%%%%%%%%%%%%%%%%%%%%%%%%%%%%%
%%
%%   Breaker's strategies
%%
%%%%%%%%%%%%%%%%%%%%%%%%%%%%%%%%%%%%%%%%%%%%%%%%%%%%%

\section{Breaker's strategies}\label{sec:breaker.strategies}

\subsection{Mindegree-1-game}

\begin{proof}[Proof of Theorem~\ref{thm:breaker_degree1}]
Phantom-Breaker plays according to the following strategy, consisting of phases: At the beginning of each phase, he chooses a vertex $v \in V(K_n)$ uniformly at random from all the vertices that are untouched by Maker at that point. Then he repeatedly claims unclaimed edges incident to that vertex. If at any point Maker tries to claim an edge incident to $v$ before Breaker claimed all edges incident to $v$, Breaker finishes with this phase and starts a new one. If at the beginning of a phase there are no more vertices untouched by Maker, then Breaker forfeits the game.

\smallskip

\textbf{Strategy discussion:}
We refer to Breaker's phase as \emph{successful} if he manages to claim a full star at $v$ and win the game. Otherwise, a phase is \emph{unsuccessful}. Note that each unsuccessful phase lasts for at most $\lfloor \frac{n-2}{b} \rfloor$ rounds. Just before Breaker's $i$-th phase, the number of vertices untouched by Maker is at least $n - (i-1)\cdot 2a\lfloor \frac{n-2}{b}\rfloor$, and during the $i$-th phase Maker touches at most $2a\lfloor \frac{n-1}{b}\rfloor$ vertices. Hence, conditioned on the previous $i-1$ phases being unsuccessful, the $i$-th phase is unsuccessful with probability upper bounded by
$$\frac{2a\lfloor \frac{n-1}{b}\rfloor}{n - (i-1)\cdot 2a\lfloor \frac{n-2}{b}\rfloor} \leq  \frac{1}{\frac{b}{2a}-(i-1)}. $$

Clearly, Breaker's phases have a positive probability of being successful as long as the previous upper bound is less than 1. Hence, by law of total probability, the probability that all those phases are unsuccessful is upper bounded by
$$ \prod_{i=1}^{\left\lfloor\frac{b}{2a}\right\rfloor} \frac{1}{\frac{b}{2a}-(i-1)}.$$
\end{proof}

\subsection{Mindegree-$k$-game}

\begin{proof}[Proof of Theorem~\ref{thm:breaker_degreek}]
PhantomBreaker's strategy is simple, he chooses a vertex $v \in V(K_n)$ uniformly at random from all $n$ vertices. Then he repeatedly claims unclaimed edges incident to $v$ until there are less than $k$ such edges left. If Maker claims $k$ edges incident to $v$, Breaker forfeits the game. Also, if at any point Maker tries to claim an edge incident to $v$ that Breaker already claimed, Breaker forfeits the game.

\smallskip

\textbf{Strategy discussion:}
Obviously, if Breaker wins he does so within the first $x:=\lceil \frac{n-k}{b} \rceil$ moves, and only if Maker has not attempted to claim any Breaker's edges during that time. Under that assumption, both players play independent of the opponents moves.
Therefore, we can look at both Maker's graph and Breaker's graph after $x$ moves under the assumption that Maker has not attempted to claim any Breaker's edges, concluding that Breaker wins if those two graphs do not intersect, and Maker's degree of $v$ did not reach $k$.

As $\frac{a}{b}<\frac{k}{2}$, we can find a constant $\eps>0$ be such that 
$\frac{a}{b}<(1-\eps)\frac{k}{2}$. After $x$ moves, the sum of degrees in Maker's graph is $2ax <(1-\eps)kn$. Hence, at least $\eps n$ vertices in Maker's graph have not reached the degree $k$.
Furthermore, during the last $y:=\frac{\eps n}{4a}$ of the first $x$ Maker's moves, Maker touched at most $2a \cdot \frac{\eps n}{4a} = \frac{\eps n}{2}$ vertices. Therefore, with probability at least $\frac{\eps}{2}$, the vertex $v$ is among the vertices that in Maker's graph have not reached degree $k$ after $x$ moves, and Maker has not touched them within the last $y$ moves. We proceed assuming that this happened.

This means that every time Maker claimed an edge incident to $v$, there were at least $by$ free edges at $v$. Knowing that at any point of the game Breaker's edges are a random subset of all the edges incident to $v$, the probability for Maker to claim one of the unclaimed edges is at least $\delta:=\frac{by}{n} = \frac{\eps b}{4a}$. Therefore, the probability for Maker to claim an unclaimed edge in every of the at most $k-1$ attempts to claim an edge incident to $v$ is at least $\delta^{k-1}$.

Putting everything together, the probability of Breaker's win is at least $\frac{1}{2}\eps \delta^{k-1}$.
\end{proof}

\bigskip

%%%%%%%%%%%%%%%%%%%%%%%%%%%%%%%%%%%%%%%%%%%%%%%%%%%%%
%%
%%   Maker's strategies
%%
%%%%%%%%%%%%%%%%%%%%%%%%%%%%%%%%%%%%%%%%%%%%%%%%%%%%%

\section{Maker's strategies}
\label{sec:maker.strategies}

In the following subsections we are going to prove
the Theorems~\ref{thm:maker_degree}--\ref{thm:maker_hamilton}.
For each of the strategies described in their proofs
we do the following: 
Even though Maker should attempt to claim $a$ edges per round, 
we always give a sequence of instructions for Maker to play single edges. When it is her turn, she simply follows the instructions until she plays $a$ edges. For the next round, she simply continues
following these instructions from the point where she stopped before.
Whenever Maker tries to claim an edge which is already occupied by PhantomBreaker, we call her choice a failure.

Additionally, sometimes when we analyse Maker's strategy, it turns out to be useful to split the 
game into phases of a small linear number of moves or rounds. We then usually define a parameter $\varepsilon$,
depending on the biases $a$ and $b$,
and define the length of such a phase to be $\varepsilon n$. To avoid
using floor and ceiling signs,
we then assume $\frac {1}{\eps}$ (number of phases) and $\eps n$ (length of phases) to be integers, instead of writing 
$\lceil \frac{1}{\eps} \rceil$ and $\lceil \eps n \rceil$
all the time, as this rounding is never crucial for our calculations.

%%%%%%%%%%%%%%%%%%%%%%%%%%%%%%%%%%%%%%%%%%%%%%%%%%%%%
%%
%%   Degree Games (larger b)
%%
%%%%%%%%%%%%%%%%%%%%%%%%%%%%%%%%%%%%%%%%%%%%%%%%%%%%%

\subsection{Mindegree-$k$-game with $b> \frac{2a}{k}$}

\begin{proof}[Proof of Theorem~\ref{thm:maker_degree}~\ref{m_deg_large_b}]
    Let $\varepsilon=\frac{a}{20kb}$.
	Maker plays the following simple randomized strategy:	
	Initially, let $V_0=V(K_n)$. As long as $V_0\neq \varnothing$,
	choose a vertex $v$ from $V_0$ uniformly at random, then claim edges incident to $v$ uniformly at random 
	until $d_M(v)\geq k$, and then remove $v$ from $V_0$.
			
	Let $v_1,v_2,v_3,\dots$ be the order in which the vertices 
	are chosen by Maker.	
	We split Maker's strategy into phases
	where the $i$-th phase stops after 
	Maker chooses $v_{i \varepsilon n }$, 
	for $1\leq i\leq \frac{1}{\varepsilon}$. 
	Maker forfeits the game in case one of these phases lasts for more than 
	$\frac{2k\varepsilon n}{a}$ rounds, i.e.~if Maker does not manage to increase
	the degree of all vertices $v_{(i-1)\varepsilon n + 1 },\ldots,
		v_{i \varepsilon n }$ to at least $k$
	within $\frac{2k\varepsilon n}{a}$ rounds.

\medskip

{\bf Strategy discussion:}
Our goal is to show that Maker never forfeits the game with
probability at least $\frac{1}{(20k\cdot b/a)^{16k\cdot b/a}}$.
In order to analyse her strategy,  we consider the following events:
\begin{itemize}
\item event $\mathcal{A}_i$: Maker does not forfeit during the $i$-th phase, and this phase lasts at most $\frac{k\varepsilon n}{a} + \sqrt{n}\log^3 n$ rounds,
\item event $\mathcal{B}_i$: let $D_{i-1}$ be the vertices
whose Breaker degree increased to $\frac{n}{4}$ during phase $i-1$, then all of $D_{i-1}$ are chosen until the end of the $i$-th phase.
\end{itemize}
We also set $\mathcal{E}_i = \bigwedge_{j \leq i} (\mathcal{A}_j \land \mathcal{B}_j)$.
Moreover, note that by the bound on the number of rounds for each phase,
we have that the total number of rounds is upper bounded by $\frac{2kn}{a}$. We first prove the following two claims.

\smallskip

\begin{claim}
For every $i\leq \frac{1}{\varepsilon}$ we have $\operatorname{Pr}(\mathcal{A}_i|\mathcal{E}_{i-1}) = 1-o(1)$.
\end{claim}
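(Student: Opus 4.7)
The plan is: conditional on $\mathcal{E}_{i-1}$, show that the number of failed attempts $F$ during phase~$i$ satisfies $F\le \sqrt n$ with probability $1-o(1)$; then phase~$i$ uses at most $(k\eps n+F)/a\le \frac{k\eps n}{a}+\sqrt n\log^3 n$ rounds, so $\mathcal{A}_i$ holds.

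First I would use $\mathcal{E}_{i-1}$ to control Breaker's graph. By $\mathcal{A}_j$ for $j\le i-1$ each earlier phase used $O(\eps n/a)$ rounds, and phase~$i$ uses at most $\frac{2k\eps n}{a}$ rounds before the forfeit triggers; with $\eps=\frac{a}{20kb}$ this gives at most $\frac{n}{10}$ Breaker edges per phase and $e(B)=O(n)$ through the end of phase~$i$. The $\mathcal{B}_j$'s then imply that every $v\in V_0$ at the start of phase~$i$ has $d_B(v)<\frac{n}{2}$ throughout phase~$i$: since $\mathcal{B}_{j+1}$ forces the vertices of $D_j$ to be processed by the end of phase~$j+1\le i-1$ for every $j\le i-2$, the monotone Breaker degree of $v$ never reached $\frac{n}{4}$ up to the end of phase~$i-2$, and phases~$i-1$ and~$i$ together add at most $\frac{n}{5}$ Breaker edges, giving $d_B(v)<\frac{n}{4}+\frac{n}{5}<\frac{n}{2}$. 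This bound also covers the constantly many vertices of $D_{i-1}$ that may still sit in $V_0$ (they entered phase~$i-1$ with $d_B<\frac{n}{4}$ and thus end phase~$i$ with $d_B<\frac{n}{4}+2\cdot\frac{n}{10}<\frac{n}{2}$).

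For the failure estimate I would use a domination argument: when Maker starts processing~$v$, pretend that all Breaker edges at~$v$ present by the end of phase~$i$ are already occupied, which only weakly increases $F_v$. In the dominating model Maker samples without replacement from a pool of $n-1$ edges of which $d:=d_B^{\mathrm{end~phase~}i}(v)<\frac{n}{2}$ are Breaker's, stopping after at most $k$ non-Breaker edges are drawn, so the standard negative hypergeometric formula yields $\mathbb{E}[F_v]\le \frac{kd}{n-d}\le \frac{2kd}{n}$. Summing over processed vertices and using $\sum_v d_B(v)=2e(B)=O(n)$,
\[
\mathbb{E}[F]\;\le\;\frac{2k}{n}\sum_{v\text{ processed in phase }i}d_B(v)\;\le\;\frac{4k\,e(B)}{n}\;=\;O(1).
\]
Markov's inequality then gives $\Pr(F>\sqrt n)=O(1/\sqrt n)=o(1)$, and on the complement phase~$i$ uses at most $\frac{k\eps n+\sqrt n+a}{a}\le \frac{k\eps n}{a}+\sqrt n\log^3 n$ rounds (in particular Maker does not forfeit), establishing $\mathcal{A}_i$. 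The delicate part is the recursive use of the $\mathcal{B}_j$'s so that \emph{every} vertex processed during phase~$i$ is genuinely below the $n/2$ threshold; once this is set up, the rest is a routine linearity-of-expectation plus Markov computation.
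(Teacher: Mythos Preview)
Your argument reaches the right conclusion but by a different route than the paper. The paper does not compute $\mathbb{E}[F]$; instead it splits the phase-$i$ vertices into \emph{bad} ones (those with $d_B\ge\sqrt n$ at some point during the phase) and \emph{good} ones ($d_B<\sqrt n$ throughout). Since $e(B)=O(n)$ there are only $O(\sqrt n)$ bad vertices, and your $d_B(v)<n/2$ bound together with Lemma~\ref{lemma:Chernoff_modified}(a) gives a.a.s.\ $O(\log^2 n)$ trials per bad vertex; for good vertices each trial fails with probability below $2/\sqrt n$, and Lemma~\ref{lemma:Chernoff_modified}(b) bounds the total good-vertex failures by $O(\sqrt n)$ a.a.s. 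So the paper obtains $O(\sqrt n\log^2 n)$ failures via two concentration estimates, while you obtain $\mathbb{E}[F]=O(1)$ and then $F\le\sqrt n$ via Markov. Your route is shorter and avoids the case split; the paper's route has the advantage that each Chernoff application only needs a per-trial bound conditional on the past, so PhantomBreaker's adaptivity causes no friction.

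That friction is the one place your sketch needs care: you dominate using $d:=d_B^{\mathrm{end\ phase\ }i}(v)$ and invoke the negative-hypergeometric mean, but this final Breaker star depends, through PhantomBreaker's adaptive play, on Maker's own random samples at $v$, so the ``bad'' set and the sample sequence are not independent and the textbook formula does not apply verbatim. A clean patch that preserves your approach: conditional on the history before each sample at $v$, the failure probability is at most $d_B^{\mathrm{end}}(v)/(n-k)$; summing over the at most $k+F_v$ samples at $v$ and then over all processed $v$ gives
\[
\mathbb{E}[F]\;\le\;\frac{k}{n-k}\sum_v d_B^{\mathrm{end}}(v)\;+\;\frac{1}{n-k}\sum_v F_v\,d_B^{\mathrm{end}}(v)\;\le\;\frac{2k\,e(B)}{n-k}\;+\;\tfrac12\,\mathbb{E}[F],
\]
using $d_B^{\mathrm{end}}(v)<n/2$ and $\sum_v d_B^{\mathrm{end}}(v)\le 2e(B)=O(n)$; hence $\mathbb{E}[F]=O(1)$ as you claimed.
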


\begin{proof}
For this proof, call a vertex \textit{bad} if its Breaker degree is at least $\sqrt{n}$.
Since the whole game lasts $O(n)$ rounds,
during the $i$-th phase at most $O(\sqrt{n})$ of the vertices $v_{(i-1)\varepsilon n +1},\ldots,v_{i \varepsilon n}$ can be bad at the moment when they are chosen or become bad after they are chosen.
However, even for these vertices $v_j$ we know that
$d_B(v_j)< \frac{n}{2}$ throughout phase $i$,
by conditioning on $\mathcal{B}_{i-1}$ and since the phases
$i-1$ and $i$ together 
last at most $\frac{4k\varepsilon n}{a}$ rounds in which
Breaker can claim at most $\frac{n}{4}$ edges. Thus, whenever Maker tries to claim an edge incident to a bad vertex $v_j$,
the probability of success is at least $\frac{1}{2}$.
Therefore, applying Lemma~\ref{lemma:Chernoff_modified}(a)
with $p=\frac{1}{2}$, together with a union bound,
we see that a.a.s.~for each of these bad vertices we need to make $O(\log^2 n)$ random choices of incident edges until getting that $d_M(v_j)\geq k$. 
For all the other vertices $v_j$ chosen during the $i$-th phase, it always holds that $d_B(v_j)<\sqrt{n}$ throughout phase $i$, and hence the choice of a random edge always fails with probability smaller than 
$\frac{2}{\sqrt{n}}$.
Applying Lemma~\ref{lemma:Chernoff_modified}(b)
with $p=\frac{2}{\sqrt{n}}$,
together with a union bound, we get that a.a.s.~for all these vertices a total of
$O(\sqrt{n})$ failures happen.

As in total, at most $k\varepsilon n$ edges need to be claimed in
the $i$-th phase while Maker's bias is $a$,
and since a.a.s.~$O(\sqrt{n}\log^2 n)$ failures happen
throughout the phase, we get that $\mathcal{A}_i$
follows a.a.s.
\end{proof}

\begin{claim} \label{c:prob-bj-min-k}
For every $i\leq \frac{1}{\varepsilon}$ we have $\operatorname{Pr}(\mathcal{B}_i|\mathcal{E}_{i-1} \land \mathcal{A}_i) \geq (1-o(1)) \varepsilon^{|V_{i-1}|}$.
\end{claim}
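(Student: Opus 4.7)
The plan is to bound $\operatorname{Pr}(\mathcal{B}_i \mid \mathcal{E}_{i-1} \land \mathcal{A}_i)$ from below in three steps: first, show that $|D_{i-1}|$ is bounded by a constant depending only on $a,b,k$; second, compute an unconditional estimate for $\operatorname{Pr}(\mathcal{B}_i \mid \mathcal{E}_{i-1})$ via a hypergeometric-type calculation on Maker's random vertex choices; and finally transfer this estimate to the conditional probability, using that $\mathcal{A}_i$ holds a.a.s.\ by the previous claim. (I interpret the exponent in the statement as $|D_{i-1}|$, which is the only set of size depending on $i$ that has naturally been introduced.)

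For the first step, I would note that conditioning on $\mathcal{E}_{i-1}$ forces every previous phase to have lasted at most $\frac{k\eps n}{a} + \sqrt{n}\log^3 n$ rounds, so by the end of phase $i-1$ PhantomBreaker has claimed at most $O(n)$ edges. The sum of Breaker-degrees is therefore $O(n)$, and a simple averaging argument gives that the number of vertices of Breaker-degree at least $n/4$ --- in particular the vertices of $D_{i-1}$ --- is at most a constant $C=C(a,b,k)$.

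For the second step, I would observe that Maker's sequence of vertex choices $v_1,v_2,\ldots$ is, by construction, a uniformly random permutation of $V(K_n)$, independent of the random edge choices and of PhantomBreaker's responses. Conditioning on $\mathcal{E}_{i-1}$ fixes $D_{i-1}$ as well as the vertices $v_1, \ldots, v_{(i-1)\eps n}$ already chosen. Let $d' := |D_{i-1} \setminus \{v_1,\ldots,v_{(i-1)\eps n}\}|$ and let $N \leq n$ be the size of the pool at the start of phase $i$. A standard hypergeometric computation yields
$$\operatorname{Pr}(\mathcal{B}_i \mid \mathcal{E}_{i-1}) \;=\; \prod_{j=0}^{d'-1} \frac{\eps n - j}{N - j} \;\geq\; \left(\frac{\eps n - d'}{n}\right)^{d'} \;=\; (1-o(1))\,\eps^{d'} \;\geq\; (1-o(1))\,\eps^{|D_{i-1}|},$$
where the last inequality uses $\eps < 1$ and $d' \leq |D_{i-1}|$, and where $d'$ being $O(1)$ (from step one) controls the error in the limit.

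For the third step, I would use the elementary bound $\operatorname{Pr}(\mathcal{B}_i \land \mathcal{A}_i \mid \mathcal{E}_{i-1}) \geq \operatorname{Pr}(\mathcal{B}_i \mid \mathcal{E}_{i-1}) - \operatorname{Pr}(\neg\mathcal{A}_i \mid \mathcal{E}_{i-1})$. The previous claim supplies $\operatorname{Pr}(\neg\mathcal{A}_i \mid \mathcal{E}_{i-1}) = o(1)$, while the first step ensures $\eps^{|D_{i-1}|}$ is a positive constant; consequently the right-hand side is $(1-o(1))\,\eps^{|D_{i-1}|}$. Dividing by $\operatorname{Pr}(\mathcal{A}_i \mid \mathcal{E}_{i-1}) \leq 1$ then delivers the desired bound. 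The main obstacle I anticipate is the correlation between the random vertex choices (which govern $\mathcal{B}_i$) and the random edge choices together with PhantomBreaker's adversarial responses (which govern $\mathcal{A}_i$); however, the crude inequality used in this last step sidesteps any need to track that correlation, precisely because $\operatorname{Pr}(\mathcal{B}_i \mid \mathcal{E}_{i-1})$ is bounded below by a positive constant while $\operatorname{Pr}(\neg\mathcal{A}_i \mid \mathcal{E}_{i-1})$ tends to $0$.
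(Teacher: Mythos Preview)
Your proposal is correct and follows essentially the same approach as the paper: both argue that $|D_{i-1}|=O(1)$ (via a degree-sum bound on Breaker's edges), and both compute the probability that the uniformly random $\varepsilon n$-subset chosen in phase~$i$ contains all of the unchosen vertices of $D_{i-1}$ via the same hypergeometric product, bounding it below by $(1-o(1))\varepsilon^{|D_{i-1}|}$.

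The one noteworthy difference is your Step~3. The paper simply carries out the hypergeometric calculation as if conditioning on the history through phase $i-1$, without explicitly addressing how the additional conditioning on $\mathcal{A}_i$ (an event about phase~$i$ itself) might bias the distribution of the vertices chosen in phase~$i$. Your subtraction argument---bounding $\Prob(\mathcal{B}_i\wedge\mathcal{A}_i\mid\mathcal{E}_{i-1})\geq \Prob(\mathcal{B}_i\mid\mathcal{E}_{i-1})-\Prob(\neg\mathcal{A}_i\mid\mathcal{E}_{i-1})$ and then dividing by $\Prob(\mathcal{A}_i\mid\mathcal{E}_{i-1})\leq 1$---handles this cleanly and rigorously, exploiting that $\varepsilon^{|D_{i-1}|}$ is a positive constant while $\Prob(\neg\mathcal{A}_i\mid\mathcal{E}_{i-1})=o(1)$. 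This is a genuine (if small) improvement in rigour over the paper's presentation. Your interpretation of the exponent as $|D_{i-1}|$ is also correct; the $|V_{i-1}|$ in the statement is a typo.
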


\begin{proof}
Note that the claim is obvious for the last phase,
since at the end of this phase every vertex is chosen.
So, we only need to consider earlier phases $i$.
Let $D_{i-1}^\ast \subset D_{i-1}$ 
be the subset of vertices in
$D_{i-1}$ that were not chosen by Maker until the end of phase $i-1$; set $t:=|D_{i-1}^\ast|$
and note that $t=O(1)$ by our general bound on the number of rounds.
Since the set of all
the vertices chosen during the $i$-th phase
forms a set of size $\varepsilon n $ 
chosen uniformly at random
from the set $V\setminus \{v_j:~ j\leq (i-1) \varepsilon n\}$ we obtain
\begin{align*}
\Prob(\text{all of $D_{i-1}^\ast$ are chosen in the $i$-th phase})
& =
\frac{\binom{n-(i-1)\varepsilon n-t}{\varepsilon n-t}}{\binom{n-(i-1)\varepsilon n}{\varepsilon n}}
=
\prod_{j=0}^{t-1}
\frac{\varepsilon n - j}{n-(i-1)\varepsilon n - j} \\
& \geq
\left( \frac{\varepsilon n-t+1}{n} \right)^t
\geq 
(1-o(1))\varepsilon^{|D_{i-1}|} ,
\end{align*}
which proves the claim.
\end{proof}

Now, using the above claims, we can conclude that
\begin{align*}
\Prob(\text{Maker wins})
& \geq
\Prob\left(  \mathcal{E}_{1/\varepsilon} \right)
=
\prod_{i=1}^{1/\varepsilon} 
\Prob\left( \mathcal{E}_i | \mathcal{E}_{i-1} \right)
=
\prod_{i=1}^{1/\varepsilon}
\Prob\left( \mathcal{A}_i | \mathcal{E}_{i-1} \right)\cdot
\Prob\left( \mathcal{B}_i | \mathcal{E}_{i-1} \land \mathcal{A}_i \right) \\
& \geq
\prod_{i=1}^{1/\varepsilon}
(1-o(1))\varepsilon^{|D_{i-1}|}
=
(1-o(1))\varepsilon^{\sum_{i=1}^{1/\varepsilon} |D_{i-1}|}
> 
\left( \frac{1}{20k\cdot b/a} \right)^{16k\cdot b/a},
\end{align*}
where
the last inequality uses the definition of $\varepsilon$
and the fact that Breaker cannot create 
$16k\cdot b/a$ vertices of degree at least $\frac{n}{4}$
within $\frac{2kn}{a}$ rounds.
This finishes the proof of the theorem.
\end{proof}

\bigskip

%%%%%%%%%%%%%%%%%%%%%%%%%%%%%%%%%%%%%%%%%%%%%%%%%%%%%
%%
%%   Degree Games (smaller b)
%%
%%%%%%%%%%%%%%%%%%%%%%%%%%%%%%%%%%%%%%%%%%%%%%%%%%%%%

\subsection{Mindegree-$k$-game with $b\leq \frac{2a}{k}$}

\begin{proof}[Proof of Theorem~\ref{thm:maker_degree}~\ref{m_deg_small_b}]
We may assume that $b=\lfloor \frac{2a}{k} \rfloor$,
since it is not a disadvantage for Maker to play against 
a smaller bias $b$. Let $\varepsilon = (10a)^{-2}$.
Before the game starts, we set $\omega(v)=0$
for every vertex $v\in V(K_n)$. This value will
increase during the game.
Moreover, at any moment in the game we let
$V_{<k} := \{v\in V(K_n):~ \omega(v)<k\}.$
Maker's strategy proceeds in two stages.	

\medskip
	
	{\bf Stage I:} As long as $V_{<k}$ has more than 
	$\frac{n}{\log n}$ vertices, Maker repeatedly does the 
	following:
	
	\begin{itemize}
	\item[(1)] Maker chooses two distinct 
	vertices $v,w \in V_{<k}$ uniformly
	at random and tries to claim the edge between them. 
	Two cases may happen:	
	\begin{itemize}
	\item[(a)] 
	If she succeeds, she claims the edge.
	\item[(b)] 
	If she fails, she proceeds to randomly claim edges 
	incident to $v$ until $d_M(v)\geq k$, 
	and afterwards she does the same for $w$ until $d_M(w)\geq k$. 
	Note that this degree condition
	may already hold before $v,w$ were chosen. In this
	case, Maker does not need to claim any edges.
	\end{itemize}
	\item[(2)] In any case, Maker increases 
	$\omega(v)$ and $\omega(w)$ by just 1, 
	and updates $V_{<k}$ accordingly. 
	
	Afterwards, she starts the next iteration, 
	except if she already tried to claim more than 
	$\log^{10}(n)$ edges according to (b). In 
	the latter case she forfeits the game.
	\end{itemize}

    Once $V_{<k}$ has size at most $\frac{n}{\log n}$, Maker 
    proceeds to Stage II.
    
    \medskip
	
	{\bf Stage II:} As long as $V_{<k}\neq \varnothing$, 
	Maker repeatedly does the following:
	
	Maker chooses a vertex $v\in V_{<k}$ uniformly at random,
	and tries to claim incident edges uniformly at random 
	until $d_M(v)\geq k$. Then she removes $v$ from $V_{<k}$.	
    She repeats this until $V_{<k}$ is empty.	
    If this stage lasts longer than $\varepsilon n$ rounds, 
    she forfeits the game.
	
	\bigskip
	
	{\bf Strategy discussion:} We need to show that Maker 
	a.a.s.~wins the game if she follows the proposed strategy. 
	As a first step, we state and prove the following two 
	simple claims.
	 	
	\begin{claim}\label{claim:omega-degree}
	At any moment in Stage I, 
	as long as Maker does not forfeit the game,
	$d_M(v)\geq \omega(v)$ for every $v\in V(K_n)$.	
	\end{claim}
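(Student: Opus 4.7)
The plan is to prove the claim by a straightforward induction on the number of iterations Maker has completed in Stage I. The base case is immediate: before the first iteration, $\omega(v) = 0 = d_M(v)$ for every $v \in V(K_n)$.

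For the inductive step, suppose the invariant $d_M(u) \geq \omega(u)$ holds for all $u$ at the start of some iteration in which Maker chooses vertices $v, w \in V_{<k}$. Observe that in this iteration $\omega$ is changed only at $v$ and $w$, and each of $\omega(v), \omega(w)$ is incremented by exactly one. For any vertex $u \notin \{v, w\}$, $\omega(u)$ is unchanged while $d_M(u)$ can only grow, so the invariant is automatically preserved at $u$. Hence it suffices to verify $d_M(v) \geq \omega(v)$ and $d_M(w) \geq \omega(w)$ at the end of the iteration.

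In case (a), Maker successfully claims the edge $vw$, increasing both $d_M(v)$ and $d_M(w)$ by exactly one, precisely matching the increment of the corresponding weights. In case (b), under the assumption that Maker does not forfeit, the iteration only terminates after Maker has played enough incident edges so that $d_M(v) \geq k$ and $d_M(w) \geq k$. Since $v, w \in V_{<k}$ at the start of the iteration, we have $\omega(v), \omega(w) \leq k-1$ before the increment, hence at most $k$ after. Therefore $d_M(v) \geq k \geq \omega(v)$ and similarly for $w$, completing the induction.

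There is no real obstacle here; the claim is a bookkeeping invariant built into the design of the strategy, ensuring that the weight function $\omega$ serves as a legitimate (and convenient) lower bound on the actual Maker-degrees throughout Stage I. The only mild point to keep in mind is the qualifier ``as long as Maker does not forfeit'', which guarantees that in case (b) the inner loop actually terminates with $d_M(v), d_M(w) \geq k$ rather than being aborted mid-way by exhausting the $\log^{10}(n)$ failure budget.
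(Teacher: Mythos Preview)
Your proof is correct and follows essentially the same approach as the paper's: an induction over iterations, verifying that in case~(a) both $d_M$ and $\omega$ increment together at the chosen vertices, while in case~(b) the degrees are pushed to at least $k$ which dominates the new $\omega$-values, and that all other vertices can only have their degree increase while their weight stays fixed. Your write-up is slightly more explicit about the induction framework than the paper's, but the content is the same.
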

	
	\begin{proof}
	Assume that a vertex $v$ is chosen in step (1)
	of some iteration and hence $\omega(v)<k$
	at that point. 
	Then, if case (a) happens, both $d_M(v)$ and 
	$\omega(v)$ increase by 1.
	If instead (b) happens, Maker makes sure that
	$d_M(v)\geq k$ holds at the end of the iteration,
	while $\omega(v)$ is increased by 1 only.
	Otherwise it may happen that $d_M(v)$
	is increased by 1 if (b) is performed for some 
	other vertex $x$ and Maker claims $xv$,
	but in this case $\omega(v)$ stays unchanged.
	Therefore, the claim follows by induction.
	\end{proof}
	
	\begin{claim}\label{clm:min-deg.process}
	There exists a constant $\alpha>0$ such that the following 
	holds a.a.s.:~as long as 
	$|V_{<k}|\geq \frac{\varepsilon n}{2}$, 
	there are at least $\alpha n$ vertices $v$ with
	$\omega(v)=0$.
	\end{claim}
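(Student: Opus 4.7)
The plan is to track $N_0(t) := |\{v : \omega(v) = 0 \text{ after iteration } t \text{ of Stage~I}\}|$ and to show that a.a.s.\ $N_0(t) \geq \alpha n$ for every $t$ lying in the regime $|V_{<k}(t)| \geq \varepsilon n/2$, where $\alpha > 0$ is a constant depending only on $k$ and $\varepsilon$ (hence on $a$ and $b$). A useful structural observation is that $\omega$-values never decrease, so $N_0(t)$ is monotone non-increasing in $t$; consequently, it suffices to lower bound $N_0$ at the last iteration inside the regime.

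First I would establish a deterministic cap on the number of iterations. Every Stage~I iteration increases $\sum_{v} \omega(v)$ by exactly $2$, while each $\omega(v)$ is capped at $k$ (as soon as $\omega(v) = k$, the vertex $v$ leaves $V_{<k}$ and is never chosen again by step~(1)). Hence the total number of Stage~I iterations $T$ satisfies $T \leq kn/2 =: T_{\max}$, so it is enough to control $N_0(t)$ for all $1 \leq t \leq T_{\max}$.

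Next I would lower-bound $\mathbb{E}[N_0(t)]$. Fix a vertex $v$ and condition on the event $\omega(v)=0$ (so $v \in V_{<k}$). The probability that $v$ is chosen in the next iteration equals $2/|V_{<k}|$, which is at most $4/(\varepsilon n)$ while we remain in the regime. Iterating over at most $T_{\max}$ iterations gives
$$\Prob\bigl(\omega(v) = 0 \text{ after iteration } t\bigr) \;\geq\; \bigl(1 - 4/(\varepsilon n)\bigr)^{T_{\max}} \;\geq\; e^{-2k/\varepsilon}(1 - o(1)),$$
so by linearity $\mathbb{E}[N_0(t)] \geq (e^{-2k/\varepsilon} - o(1))\,n$ uniformly in $t \leq T_{\max}$.

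Finally I would apply Azuma--Hoeffding to the Doob martingale $M_s := \mathbb{E}[N_0(t) \mid \mathcal{F}_s]$ obtained by exposing one iteration at a time, where $\mathcal{F}_s$ is the history up to iteration $s$. A single iteration alters $N_0$ by at most $2$, and there are $O(n)$ iterations, so Azuma yields $\Prob\bigl(|N_0(t) - \mathbb{E}[N_0(t)]| > \sqrt{n}\log n\bigr) \leq \exp(-\Omega(\log^2 n))$. A union bound over the $O(n)$ values of $t$ then gives, a.a.s., $N_0(t) \geq \alpha n$ for every $t \leq T_{\max}$ with $|V_{<k}(t)| \geq \varepsilon n /2$, for $\alpha := \tfrac{1}{2} e^{-2k/\varepsilon}$. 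The most delicate point will be that the bound $4/(\varepsilon n)$ on the conditional selection probability is only valid inside the regime; but since $N_0(t)$ is monotone non-increasing, we may simply restrict attention to iterations before the first exit from the regime without losing anything in the lower bound.
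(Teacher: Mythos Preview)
Your first-moment computation is sound: the monotonicity observation, the deterministic cap $T_{\max}=kn/2$, and the per-step selection bound $2/|V_{<k}|\le 4/(\varepsilon n)$ inside the regime combine to give $\mathbb{E}[N_0(t)]\ge (1-o(1))e^{-2k/\varepsilon}n$. The difficulty is in the concentration step. Your justification for Azuma is that ``a single iteration alters $N_0$ by at most $2$'', i.e.\ $|N_0(s)-N_0(s-1)|\le 2$. But that bound on the \emph{process} increments does not imply a bound on the \emph{Doob martingale} increments $|M_s-M_{s-1}|$: the choice at step $s$ is made from the random set $V_{<k}(s-1)$, so altering one step perturbs the \emph{distribution} of every subsequent step, and there is no a priori reason the effect on $\mathbb{E}[N_0(t)\mid\mathcal F_s]$ is $O(1)$. (A toy counterexample to the implication you are using: $X_0=0$, $X_1\in\{\pm1\}$ uniform, $X_s=X_{s-1}+\mathrm{sign}(X_1)$; then $|X_s-X_{s-1}|\le 1$ but the Doob martingale for $X_t$ jumps by $t$ at $s=1$.) To rescue Azuma here you would need a coupling showing that two different outcomes of step $s$ lead to futures whose $N_0(t)$ differ by $O(1)$; this is plausible via an exchangeability/type-vector argument, but it is nontrivial and you have not supplied it. An alternative fix is a second-moment argument (negative correlation of the indicators $\mathbb{1}[\omega(v)=0]$), but that too needs work.

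The paper avoids this issue entirely by a different device. It replaces Maker's pair-selection process $R$ (uniform pair from $V_{<k}$) by an auxiliary process $R'$ that picks uniform pairs from all of $V$ and discards those not lying in $V_{<k}$; the two are equivalent in distribution. In $R'$ the underlying pairs are i.i.d., so after a Chernoff bound on the number of iterations needed they form a random graph $G(n,m)$ with $m=\Theta(n)$, and the set $\{v:\omega(v)=0\}$ contains all isolated vertices of this random graph. The claim then reduces to the classical fact that $G(n,\Theta(n))$ a.a.s.\ has $\Theta(n)$ isolated vertices. This coupling-to-i.i.d.\ trick is exactly what lets one sidestep the adaptive-dependence problem your Azuma argument runs into.
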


\begin{proof}
Consider the following two random processes $R$ and $R'$:

\begin{itemize}
\item Process $R$:
as long as $|V_{<k}|\geq \frac{\varepsilon n}{2}$ holds,
choose a pair $v,w$ uniformly at random from
$V_{<k}$, increase $\omega(v)$ and $\omega(w)$ by exactly 1; update $V_{<k}$ afterwards.

\item Process $R'$:
as long as $|V_{<k}|\geq \frac{\varepsilon n}{2}$ holds,
choose a pair $v,w$ uniformly at random from
$V$. If $v$ or $w$ does not belong to $V_{<k}$, then ignore this choice. Otherwise, i.e.~if both vertices belong to $V_{<k}$, increase $\omega(v)$ and $\omega(w)$ by exactly 1; and update $V_{<k}$ afterwards.
\end{itemize} 

Process $R$ is part of Maker's strategy in Stage I.
However, Maker could also choose to use $R'$
instead, and whenever $\{v,w\}\not\subset V_{<k}$
happens she just needs to ignore this choice for her strategy
and make a new random choice.
Now, as long as $|V_{<k}|\geq \frac{\varepsilon n}{2}$, 
we get that each time a random choice $\{v,w\}$
is done in the process $R'$, 
the probability that it belongs to $V_{<k}$ is at least 
$(\frac{\varepsilon}{2})^2$.
Moreover, each time such a choice belongs to 
$V_{<k}$, both values $\omega(v)$ and $\omega(w)$
are increased by 1; and hence it requires less than
$\frac{kn}{2}$ such successful choices 
$\{v,w\} \subset V_{<k}$ until 
$|V_{<k}| < \frac{\varepsilon n}{2}$ holds.
Now, by applying Lemma~\ref{lemma:Chernoff_modified}(a) with $p=(\frac{\varepsilon}{2})^2$,
we get that a.a.s~at most $\frac{4kn}{\varepsilon^2}$ 
iterations of $R'$ are enough
to reach that point when $|V_{<k}| < \frac{\varepsilon n}{2}$.
These $\frac{4kn}{\varepsilon^2}$ edges form a random graph 
$G(n,\frac{4kn}{\varepsilon^2})$ chosen uniformly among 
all graphs on $n$ vertices with $\frac{4kn}{\varepsilon^2}$ edges.
It is well known that such a random graph a.a.s.~has at least 
$\alpha n$ isolated vertices,
for an appropriately chosen constant 
$\alpha=\alpha(k,\varepsilon)$
(see e.g.~\cite{bollobas1998random}, or Theorem 3.3 (with $d=0$) together with Lemma 1.3 in~\cite{frieze2015introduction}).
\end{proof}

Next, let us take a closer look at Maker's randomized strategy and bound the number of failures in Stage I.

	\begin{claim}
	The following a.a.s.~holds: If Maker does not forfeit 
	in Stage I, there are at most $O(\log^4(n))$ 
	failures in Stage~I.
	\end{claim}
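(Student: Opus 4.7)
The plan is to split the failures in Stage I into two contributions: $F_1$ for failures in step (1) (attempts on the random edge $v_jw_j$) and $F_b$ for failures during the degree-filling substep (b), and to bound each separately. As preliminary bookkeeping, in every iteration of Stage I each of $\omega(v_j)$ and $\omega(w_j)$ increases by exactly $1$, so $\sum_v\omega(v)$ grows by $2$; since this sum stays bounded by $kn$, Stage I consists of at most $kn/2$ iterations. Combining this with the no-forfeit assumption (total step (b) attempts $\leq \log^{10}(n)$), Maker makes at most $O(n)$ attempts in Stage I, so Breaker claims $O(n)$ edges throughout, and in particular $\sum_v d_B(v)=O(n)$ at every moment.

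To bound $F_1$, in each iteration the pair $\{v_j,w_j\}$ is uniform on $\binom{V_{<k}}{2}$ and $|V_{<k}|\geq n/\log n$ throughout Stage I, so the per-iteration failure probability is at most $e_B/\binom{|V_{<k}|}{2}\leq O(\log^2 n/n)$. Applying Lemma~\ref{lemma:Chernoff_modified}(c) to the $\leq kn/2$ iterations, I would obtain $F_1=O(\log^2 n)$ a.a.s.

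For $F_b$, I would use that each step (b) trigger corresponds to a step (1) failure, so a.a.s.~there are at most $O(\log^2 n)$ triggers. In each trigger, Maker samples incident edges at $v_j$ and then at $w_j$ uniformly without replacement until the Maker-degree reaches $k$; a standard negative-hypergeometric computation shows that the expected number of failures at such a vertex $z$ is of order $O(k\, d_B(z)/n)$ provided $d_B(z)\leq n/2$. Averaging over the uniform choice of $v_j,w_j\in V_{<k}$, and using $\sum_v d_B(v)=O(n)$ (so the average Breaker degree is $O(\log n)$), I would obtain $\mathbb{E}[F_b]=O(\log^3 n)$, and then Markov's inequality yields $F_b=O(\log^4 n)$ a.a.s., giving the claim.

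The main obstacle is the possibility that Breaker concentrates $\Theta(n)$ edges at a single vertex $v^*$, in which case hitting $v^*$ in step (b) would produce many failures and invalidate the averaged bound. The no-forfeit assumption is essential here: in that scenario Maker would overrun the $\log^{10}(n)$ step (b) budget before finishing at $v^*$ and thus forfeit, making the implication vacuous. Under no forfeit, such pathological Breaker concentrations cannot contribute, so the averaged bounds above apply and deliver the claimed $O(\log^4 n)$ total.
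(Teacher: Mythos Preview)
Your decomposition into $F_1$ and $F_b$ and your bound $F_1=O(\log^2 n)$ a.a.s.\ are fine and match the paper. The problem is the handling of $F_b$.

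The crucial piece of bookkeeping you are missing is that Stage~I terminates while $|V_{<k}|>\frac{n}{\log n}$, so there are at most $\frac{kn}{2}-\frac{n}{2\log n}$ iterations, not $\frac{kn}{2}$. Combined with the no-forfeit bound of $\log^{10}n$ step~(b) attempts and $b\le \frac{2a}{k}$, this gives the deterministic bound $e(B)\le n-\frac{n}{2k\log n}$ throughout Stage~I. In particular $d_B(v)\le n-\frac{n}{2k\log n}$ for \emph{every} vertex, so every single step~(b) trial succeeds with probability at least $\frac{1}{2k\log n}$. The paper then applies Lemma~\ref{lemma:Chernoff_modified}(a) to show each step~(b) execution finishes in $O(\log^2 n)$ trials with probability $1-n^{-\Omega(1)}$, and a union bound over the $O(\log^2 n)$ executions gives $F_b=O(\log^4 n)$.

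Your argument only gets $e(B)=O(n)$, which allows $d_B(v^\ast)$ arbitrarily close to $n-1$, and your two fallback devices do not close this gap. First, the ``averaging over the uniform choice of $v_j,w_j$'' is not legitimate for the pairs that actually trigger step~(b): those pairs are conditioned on $v_jw_j\in E(B)$, which biases them precisely toward vertices of high Breaker degree, so the quantity ``average $d_B$ over $V_{<k}$ is $O(\log n)$'' is irrelevant for the triggered vertices. Second, the no-forfeit argument does not make the concentrated case vacuous: with the true bound $d_B(v^\ast)\le n-\frac{n}{2k\log n}$, step~(b) at $v^\ast$ typically finishes in $\Theta(\log n)$ trials, far below the $\log^{10}n$ budget, so no forfeit occurs --- yet your proviso $d_B(z)\le n/2$ fails and your formula $O(k\,d_B(z)/n)=O(1)$ underestimates the actual $\Theta(\log n)$ failures there. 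In short, ``no forfeit'' does not exclude moderate concentration, and moderate concentration already breaks your averaged bound. The refined edge count $e(B)\le n-\frac{n}{2k\log n}$ is what makes the whole argument go through, and once you have it the averaging and the no-forfeit detour are unnecessary.
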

	
	\begin{proof}
	Since each iteration in Stage I increases $\omega(\cdot)$
	for two vertices in $V_{<k}$ until 
	this set has size at most $\frac{n}{\log n}$,
    at most $\frac{kn}{2}-\frac{n}{2\log n}$ such iterations 
    will be performed. Assuming that Maker does never forfeit 
    the game (i.e.~she tries to claim at most $\log^{10}n$ 
    edges according to case (b)), 
    these iterations happen within
    $\frac{kn}{2a}-\frac{n}{2a\log n} + \log^{10}n	
    < \frac{kn}{2a}-\frac{n}{4a\log n}$
	rounds. In the meantime, Breaker can claim
	at most
	$n - \frac{n}{2k\log n}$
	edges.
	Therefore, whenever Maker chooses a pair 
	$\{v,w\}$ in $V_{<k}$ according to (1), 
	she has at least $\frac{n^2}{2\log^2n}$ possible choices, 
	among which less than $n$ edges $vw$ are blocked already. 
	Thus, when Maker wants to claim an edge $vw$
	according to (1), 
	the probability that $vw$ is already occupied is 
	bounded from above by $\frac{2\log^2 n}{n}$. 
	By Lemma~\ref{lemma:Chernoff_modified}(b)
	with $p=\frac{2\log^2 n}{n}$,
	this will happen a.a.s.~only $O(\log^2 n)$ 
	times among all iterations of Stage I. 
	After Maker fails to claim an edge 
	$vw \in E(K_n)$ according to (1), she 
	will claim edges incident to $v$ and $w$ until 
	both have 
	degree at least $k$. Since Breaker's degree at 
	each vertex 
	is at most $n - \frac{n}{2k\log{n}}$, 
	whenever Maker 
	tries to claim an edge, the probability 
	of success is at least
	$\frac{1}{2k\log n}$. 
	Applying Lemma~\ref{lemma:Chernoff_modified}(a)
	with $p=\frac{1}{2k\log n}$,
	it follows that a.a.s. $O(\log^2 n)$ trials 
	are enough to increase the degree of 
	$v$ and $w$ to $k$,
	even with union bound over all 
	failures made in (1). 
	Thus, Maker a.a.s.~fails at most $O(\log^4(n))$ 
	times during Stage I. 
	\end{proof}

	In the following, we condition on the event described in 
	the above claim. Note that this in particular implies that
	Maker does not forfeit the game during Stage I.
	At the end of this stage, because of 
	Claim~\ref{claim:omega-degree}, every vertex in 
	$V\setminus V_{<k}$ has degree at least $k$.
    So, if Maker can a.a.s.~follow Stage~II as described, 
    i.e.~always increase $d_M(v)$ to $k$ for some vertex $v$
    until $V_{<k}=\varnothing$, 
    then she a.a.s.~wins. To show that this is indeed the case, 
    we next prove the following.

	\begin{claim}\label{clm:mindeg:dB-bound}
		At the end of Stage I a.a.s. 
		for all $v \in V_{<k}$ we have 
		$d_B(v) \leq (\frac{1}{2} + \epsilon)n$.
	\end{claim}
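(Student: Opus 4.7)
The plan is to first bound Breaker's total edge budget in Stage~I, which limits the set $V^{\star}:=\{v\in V(K_n) : d_B(v) > (\tfrac12+\epsilon)n\}$ to constant size, and then to show that each vertex in $V^{\star}$ is a.a.s.\ picked by Maker at least $k$ times during Stage~I, so it cannot be in $V_{<k}$ at the end. Conditioning on the event of the previous claim that at most $O(\log^4 n)$ failures occur in Stage~I, the number of iterations is at most $T:=kn/2$ (each iteration raises $\sum_v\omega(v)$ by $2$, and $\omega\leq k$), so Stage~I lasts for at most $kn/(2a)+o(n)$ rounds. Since $b\leq 2a/k$, Breaker plays at most $n+o(n)$ edges in total, and $\sum_v d_B(v)\leq 2n+o(n)$; hence $|V^{\star}|\leq C$ for some constant $C=C(\epsilon)$.

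Fix a vertex $v$ and let $\sigma(v)$ denote the first round at which Breaker's degree at $v$ reaches $(\tfrac12+\epsilon)n/2$; this is a stopping time for the natural filtration of the game. On the event $\{v\in V^{\star}\}$, Breaker must still claim a further $(\tfrac12+\epsilon)n/2$ edges incident to $v$, which requires at least $(\tfrac12+\epsilon)n/(2b)\geq k(\tfrac12+\epsilon)n/(4a)$ additional rounds, i.e.\ at least $k(\tfrac12+\epsilon)n/4$ additional iterations. Writing $i_c$ for the iteration reached at the end of round $\sigma(v)$, we have $T-i_c\geq k(\tfrac12+\epsilon)n/4-o(n)$, and since at most $2i_c/k$ vertices can have $\omega=k$ by iteration $i_c$, we get $|V_{<k}^{i_c}|\geq n-2i_c/k\geq n/4-o(n)$.

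The crucial point is that in iterations $i>i_c$ Maker's picks are uniform random pairs from $V_{<k}^i$, and this random process is independent of Breaker's subsequent play. Decomposing the post-commit window into $\Theta(\log\log n)$ dyadic phases indexed by $|V_{<k}^i|\in(n/2^{j+1},n/2^j]$, each phase spans $\Omega(nk/2^j)$ iterations during which $v$ is picked with probability at least $2^{j+1}/n$; hence the expected number of picks of $v$ per phase is $\Omega(k)$, and the total expected number of post-commit picks of $v$ is $\Omega(k\log\log n)$. A phase-wise application of Lemma~\ref{lemma:Chernoff_modified}(a) then yields
\[
\Prob\bigl[v\in V_{<k}\text{ at the end of Stage~I}\,\big|\,v\in V^{\star}\bigr]\leq (\log n)^{-\Omega(k)}.
\]

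Combining everything, $\mathbb{E}\bigl[|V^{\star}\cap V_{<k}|\bigr]=\sum_v\Prob[v\in V^{\star}\wedge v\in V_{<k}]\leq C\cdot(\log n)^{-\Omega(k)}=o(1)$, and Markov's inequality completes the argument. The main obstacle is handling Breaker's adaptivity: the naive symmetry bound $\Prob[v\in V_{<k}\text{ at end}]\leq 1/\log n$ is not summable over the $n$ vertices, and Breaker could in principle target exactly those vertices that Maker's past moves suggest are most likely to remain in $V_{<k}$. This is circumvented by working at the stopping time $\sigma(v)$, after which Maker's remaining randomness is independent of Breaker's future choices, so the strong concentration obtained from the dyadic phase decomposition applies uniformly over all Breaker strategies.
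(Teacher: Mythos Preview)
Your approach differs substantially from the paper's. The paper observes that Breaker's total budget in Stage~I is less than $n$, so at most one vertex can reach $d_B\geq n/2$; for that vertex it invokes Claim~\ref{clm:min-deg.process} (at the moment $d_B(v)=n/2$ there are still $\alpha n$ vertices with $\omega=0$), and then uses a symmetry/monotonicity argument: since $\omega(v)\geq\omega(y)=0$ for any such $y$, one has $\Pr[v\in V_{<k}^{final}]\leq\Pr[y\in V_{<k}^{final}]\leq |V_{<k}^{final}|/(\alpha n)\leq 1/(\alpha\log n)$. Your concentration route via dyadic phases avoids Claim~\ref{clm:min-deg.process} altogether, which is appealing, but two steps need repair.

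First, the phase-length assertion ``each phase spans $\Omega(nk/2^j)$ iterations'' is not justified. The only deterministic lower bound is $\Omega(n/2^j)$, since each iteration removes at most two vertices from $V_{<k}$; the worst case is when every remaining vertex has $\omega=k-1$. Hence the expected number of picks of $v$ per phase is only $\Omega(1)$, not $\Omega(k)$, and the tail bound becomes $(\log n)^{-\Omega(1)}$ rather than $(\log n)^{-\Omega(k)}$. This still gives $o(1)$, so the conclusion survives, but the written argument overclaims.

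Second, the conditioning is more delicate than your final paragraph suggests. Your bound $|V_{<k}^{i_c}|\geq n/4$ is obtained from $T'-i_c\geq k(\tfrac12+\epsilon)n/4-o(n)$, which uses that Stage~I continues long enough after $\sigma(v)$ for Breaker to reach degree $(\tfrac12+\epsilon)n$; that is a condition on Maker's \emph{future} process (the length of Stage~I), not only on $\mathcal{F}_{\sigma(v)}$. So ``Maker's remaining randomness is independent of Breaker's future choices'' does not by itself let you drop the conditioning on $\{v\in V^\star\}$ when bounding $\Pr[v\in V_{<k}^{final}\mid v\in V^\star]$. The clean fix is to note that $\{v\in V^\star\}\cap\{\text{few failures}\}\subseteq\{\sigma(v)<\infty,\ |V_{<k}^{\sigma(v)}|\geq n/4\}$ (if $|V_{<k}^{\sigma(v)}|<n/4$ then deterministically at most $kn/8$ iterations remain, hence at most $n/4$ further Breaker edges, so $d_B(v)$ cannot reach $(\tfrac12+\epsilon)n$). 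The right-hand event is $\mathcal{F}_{\sigma(v)}$-measurable, so you may apply the dyadic-phase bound conditionally on $\mathcal{F}_{\sigma(v)}$ and then sum using $\sum_v\Pr[\sigma(v)<\infty]=\mathbb{E}[\#\{v:d_B(v)\geq(\tfrac12+\epsilon)n/2\}]=O(1)$.
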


\begin{proof}
	As explained in the previous proof, Breaker a.a.s.~claims 
	in total at most 
	$n - \Theta(\frac{n}{\log{n}})$ edges in Stage I, and 
	therefore can only achieve for one vertex $v \in V(K_n)$ 
	that $d_B(v) \geq \frac{1}{2}n$.
	Assume that $d_B(v) \geq (\frac{1}{2} + \epsilon)n$ at the 
	end of Stage I. Our goal is to show that at this point
	a.a.s.~$v\notin V_{<k}$.

	At the time Breaker achieved $d_B(v) \geq \frac{1}{2}n$, 
	there were still at least $\frac{\epsilon n}{b}$ rounds to 
	be played according to Stage I, 
	and hence $|V_{<k}|\geq \frac{\epsilon n}{2}$
	at that moment. By Claim~\ref{clm:min-deg.process}
	we know that a.a.s.~at that moment there are at least 
	$\alpha n$ vertices $u$ with $\omega(u)=0$, for some 
	constant $\alpha$. We choose $V_1$ to be a set of 
	$\alpha n$ such vertices, and let $y$ be any fixed vertex 
	in $V_1\setminus \{v\}$. We write $V_{<k}^{final}$ for the 
	set $V_{<k}$ at the end of Stage I, and note that every 
	vertex in $V_1$ has the same probability of ending up in
	$V_{<k}^{final}$. Moreover, since 
	$\omega(v)\geq \omega(y)$ at the described moment,
	we conclude
\begin{align*}
\text{Pr}(v\in V_{<k}^{final}) 
\leq
& \text{Pr}(y\in V_{<k}^{final}) \\
=
& \sum_{t=0}^{\frac{n}{\log n}}
\text{Pr}\left(y\in V_{<k}^{final} \Big| |V_1\cap V_{<k}^{final}|=t \right)
\cdot \text{Pr($|V_1\cap V_{<k}^{final}|=t$)} \\
=
& \sum_{t=0}^{\frac{n}{\log n}}
\frac{t}{\alpha n}
\cdot \text{Pr($|V_1\cap V_{<k}^{final}|=t$)} 
\leq \frac{1}{\alpha \log n} = o (1) .
\end{align*}
This proves the claim.
\end{proof}	

		It remains to be shown that Maker succeeds with her strategy in Stage II. For this note the following. Since by Claim~\ref{clm:mindeg:dB-bound}
    every vertex $v \in V_{<k}$ fulfils $d_B(v) \leq (\frac{1}{2} + \epsilon)n$ at the beginning of Stage II, and since Stage II lasts at most $\epsilon n$ rounds, Maker succeeds with probability at least $\frac{1}{2} - 2\epsilon b > \frac{1}{4}$ whenever she tries to claim an edge in Stage~II. 
For finishing Stage II, Maker in total needs at most 
$k\cdot \frac{n}{\log n} = \Theta(\frac{n}{\log n})$ successes, because $|V_{<k}|\leq \frac{n}{\log n}$ when Stage II starts. Applying Lemma~\ref{lemma:Chernoff_modified}(a) it follows that this number of successes a.a.s. happens within less than $\epsilon n$ rounds. Hence, Maker wins a.a.s.
\end{proof}

\bigskip

%%%%%%%%%%%%%%%%%%%%%%%%%%%%%%%%%%%%%%%%%%%%%%%%%%%%%
%%
%%   Perfect Matching Game (larger b)
%%
%%%%%%%%%%%%%%%%%%%%%%%%%%%%%%%%%%%%%%%%%%%%%%%%%%%%%

\subsection{Perfect matching game with $b> 2a$}

\begin{proof}[Proof of Theorem~\ref{thm:maker_matching}~\ref{match_large_b}]
We start by describing Maker's strategy. 
To fix an upper bound on the number of rounds, we let Maker forfeit
if she does not win the game within $\frac{n}{2a}+n^{0.99}$ rounds.
During the game, she will maintain two sets of vertices, $V_M$ -- vertices in Maker's matching, and $V_0$ -- vertices not yet in Maker's matching. Initially, $V_M$ is empty and $V_0$ contains all $n$ vertices.

\medskip

\textbf{Stage I:} This stage lasts for $\frac{n}{2} - n^{0.7}$ steps. In each step, Maker chooses u.a.r.~two vertices $x_1, x_2\in V_0$. 

\begin{enumerate}
\item[(1)] She first tries to claim $x_1x_2$. If successful, that completes the step. Then the vertices $x_1, x_2$ are matched and moved from $V_0$ to $V_M$.

\item[(2)] Otherwise, for each of the vertices $x_i$, $i\in \{1,2\}$, Maker does the following. She chooses u.a.r.~a vertex $y$ from $V\setminus \{x_i\}$ and tries to claim $y x_i$. As long as that is unsuccessful, a new $y$ is chosen the same way and again she attempts to claim $y x_i$.

Once she succeeds, if $y\in V_0$, then $x_i$ and $y$ are matched and moved from $V_0$ to $V_M$. If, however, $y\in V_M$ and it is matched to $y'\in  V_M$, then Maker tries to claim the edge between $y'$ and a vertex $z$ chosen u.a.r.~from $V_0$. If that is successful, both $x_i$ and $z$ are moved from $V_0$ to $V_M$, with $y$ now being matched to $x_i$, and $y'$ being matched to $z$, ending the step. If claiming $y'z$ is unsuccessful, a new $y$ is chosen again as above and the process is repeated.

Once this process is successful for both $x_1$ and $x_2$, the step is completed. If the process of choosing a new $y$ in (2) is repeated more than $8 \log n$ times within the same step, Maker forfeits the game.

\end{enumerate}

\textbf{Stage II:} This stage lasts until Maker either completes a perfect matching, or she forfeits the game. In each step, she chooses u.a.r.~two vertices $x_1, x_2\in V_0$. Then $2n^{0.1}$ many matched pairs $s^{(1)}_1 t^{(1)}_1$, $s^{(2)}_1 t^{(2)}_1$, $s^{(1)}_2 t^{(1)}_2$, $s^{(2)}_2 t^{(2)}_2$, \dots, $s^{(1)}_{n^{0.1}} t^{(1)}_{n^{0.1}}$, $s^{(2)}_{n^{0.1}} t^{(2)}_{n^{0.1}}$,  from $V_M$ are chosen u.a.r.~one after another. Whenever such a pair $s^{(i)}_j t^{(i)}_j$ is chosen, Maker attempts to claim the edge $x_i s^{(i)}_j$. Once done with this process, Maker repeatedly tries to claim the edge $t^{(1)}_{j'} t^{(2)}_{j''}$, where $j', j''$ are chosen u.a.r., until she successfully completes a path of length five, $x_1 s^{(1)}_{j'} t^{(1)}_{j'} t^{(2)}_{j''}s^{(2)}_{j''} x_2$. Then $x_1, x_2$ are moved from $V_0$ to $V_M$, and the six vertices are matched in pairs along this path, completing the step. If, however, after $n^{0.1}$ choices of $j', j''$ the step is still not complete, Maker forfeits the game.

\medskip

\textbf{Strategy discussion:}
Our goal is to show that Maker does never forfeit the game with probability at least 
$\frac{1}{(10b/a)^{11\cdot b/a}}$.
Let $\varepsilon = \frac{a}{10b}$.
In order to analyse her strategy, 
we split it into
$\frac{1}{\eps}$ phases, where the 
$i$-th phase finishes when $V_M$ contains at least $i\eps n$ vertices.
Moreover,
we consider the following events:
\begin{itemize}
\item event $\mathcal{A}_i$: Maker completes the $i$-th phase without forfeiting, and the phase lasts at most $\frac{\eps n}{2a} + n^{0.9}$ rounds. 
\item event $\mathcal{B}_i$: 
let $D_{i-1}$ be the vertices
whose Breaker degree increased to $\frac{n}{10}$ during phase $i-1$, then all of $D_{i-1}$ are moved to $V_M$ until the end of the $i$-th phase.
\end{itemize}

We also set $\mathcal{E}_i = \bigwedge_{j \leq i} (\mathcal{A}_j \land \mathcal{B}_j)$. 
Moreover, note that by bounding the total number of rounds with
$\frac{n}{2a} + n^{0.99}$, the total number of Breaker's edges is upper bounded by $B^{\text{(tot)}}:=(1 + o(1))\frac{bn}{2a}$ throughout the game.
If on top of that all the events $\mathcal{E}_i$, with $i \leq \frac{1}{\eps}-1$, occur, then Breaker's maximum degree in $V_0$ will never get higher than 
$(1+o(1))(\frac{1}{10} + \frac{b\eps}{a}) n < \frac{n}{4}$. To see this note that the number of rounds played within each phase is $(1+o(1))\frac{\eps}{2a} n$ because of the $\mathcal{A}_i$, and under assumption of $\mathcal{B}_i$, every vertex that reaches Breaker degree at least $\frac{n}{10}$ until or during the $(i-1)$-st phase 
is moved to $V_M$ until the end of the $i$-th phase.

We next prove a few claims similarly to the proof of Theorem~\ref{thm:maker_degree}~\ref{m_deg_large_b}.

\begin{claim} \label{c:prob-ai-pm}
For every $i<\frac{1}{\eps}$ we have $\operatorname{Pr}(\mathcal{A}_i|\, \mathcal{E}_{i-1}) = 1-o(1)$.
\end{claim}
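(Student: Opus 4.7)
The plan is to show that, conditional on $\mathcal{E}_{i-1}$, Maker a.a.s.\ completes the $\sim \varepsilon n/2$ steps of phase $i$ without forfeiting and within the round budget $\varepsilon n/(2a)+n^{0.9}$. As the paragraph preceding the claim observes, $\mathcal{E}_{i-1}$ together with the length bound defining $\mathcal{A}_i$ forces $d_B(v)\le n/5+o(n)<n/4$ for every $v\in V_0$ throughout phase $i$. I treat this degree bound as a valid invariant up to the first moment Maker would exceed her round budget and close the loop at the end by showing that she completes phase $i$ strictly before then.

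For a single step, I first analyse step~(1). Since $|V_0|\ge \varepsilon n$ throughout the phase (because $i<1/\varepsilon$) and the total number of claimed edges in the game so far is $O(nb/a)$, the probability that the random pair $x_1,x_2\in V_0$ satisfies $x_1x_2$ already claimed is at most $O(nb/a)/|V_0|^2 = O((b/a)^3/n)$. Hence Lemma~\ref{lemma:Chernoff_modified}(c) yields that the number of steps of phase $i$ that reach step~(2) is $O(\log n)$ with probability $1-o(1)$. Within a step that reaches (2), I claim each $y$-iteration succeeds in completing the processing of the current $x_i$ with probability at least a constant $c=c(a,b)>0$. The argument splits on $|V_0|$. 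If $|V_0|\ge n/2$, the number of $y\in V_0$ with $yx_i$ unclaimed is at least $|V_0|-1-d_B(x_i)-d_M(x_i)=\Omega(n)$, so the $V_0$-branch already succeeds with constant probability. If $|V_0|<n/2$, I use the augmentation branch: the number of $y\in V_M$ with $yx_i$ unclaimed is $\Omega(n)$, and the identity $\sum_{y'\in V_M}(d_B(y',V_0)+d_M(y',V_0))\le e_B+2e_M=O(nb/a)$ combined with Markov's inequality shows that all but $O((b/a)^2)$ matched pairs $\{y,y'\}$ in $V_M$ satisfy $d_B(y',V_0)+d_M(y',V_0)\le |V_0|/2$. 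For such a partner $y'$, the subsequent claim of $y'z$ for a uniform $z\in V_0$ succeeds with probability $\ge 1/2$, yielding the constant $c$.

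Given this lower bound, the number of $y$-iterations within a single step is stochastically dominated by a geometric random variable with parameter $c$, so the probability that any single step exceeds $8\log n$ iterations is $(1-c)^{8\log n}=n^{-\Omega(1)}$. A union bound over the $\varepsilon n/2$ steps shows that Maker does not forfeit with probability $1-o(1)$. Counting Maker's edges, case~(1) successes contribute at most $\varepsilon n/2$ edges (one per step), while the $O(\log n)$ invocations of case~(2) use at most $O(\log n)$ edges each (each $y$-iteration uses at most two attempted edges), contributing $O(\log^2 n)$ extra. Maker therefore plays $\varepsilon n/2+O(\log^2 n)$ edges in phase $i$, requiring $\varepsilon n/(2a)+O(\log^2 n/a)\le \varepsilon n/(2a)+n^{0.9}$ rounds and closing the self-referential invariant. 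The main delicate point is the regime $|V_0|<n/2$, where step~(1) no longer yields a matching edge with large enough probability; the bad-set argument on the matched partners $y'$, exploiting that $e_B=O(nb/a)$, is what rescues a constant success probability per $y$-iteration.
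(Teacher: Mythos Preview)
Your argument is correct and mirrors the paper's approach: bound the number of steps entering (2), then for those steps establish a constant per-iteration success probability via an averaging/Markov argument on Breaker's degrees among the matched vertices (the paper splits on the phase index $i$ and uses matching-edge degrees, you split on $|V_0|$ and use $d_B(y',V_0)+d_M(y',V_0)$, but the idea is the same). One small slip: since step (2) processes \emph{both} $x_1$ and $x_2$, the number of $y$-iterations in a single step is dominated by a \emph{sum of two} geometrics, not one, so the per-step forfeit probability is $O\big(\log n\cdot(1-c)^{8\log n}\big)$ rather than $(1-c)^{8\log n}$; with your $c\approx\tfrac18$ the union bound over all $\tfrac{\varepsilon n}{2}$ steps then becomes borderline, and the cleaner route---which the paper takes and for which you already have the ingredient---is to union bound only over the $O(\log n)$ steps that actually enter (2).
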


\begin{proof}
We assume that $\mathcal{E}_{i-1}$ occurred. First note that for $i<\frac{1}{\eps}$ the $i$-th phase is still completely part of Stage~I. As throughout the $i$-th phase the number of pairs of vertices in $V_0$ is $\Theta(n^2)$ and the number of edges Breaker claimed is not more than $B^{\text{(tot)}}$, which is linear in $n$, the expected number of unsuccessful attempts to claim $x_1x_2$ in (1) is constant. Due to Markov's inequality, the probability that this number is larger than $\log n$ is upper bounded by $o(1)$.

In each of those unsuccessful attempts we proceed to (2). Knowing that $\mathcal{B}_j$ occurred for all $j<i$, the Breaker's degree of $x_1$ and $x_2$ is upper bounded by $\frac{n}{4}$
throughout the $i$-th phase. 
If $i=1$ and hence $|V_M|\leq \varepsilon n$,
we have that whenever a vertex $y$ 
is chosen according to (2),
then with probability larger than $0.5$ we have $y\in V_0$ and $yx_1$ or $yx_2$, respectively, is unclaimed. Otherwise, if $i\geq 2$, we see the following:
As there are more than $(i-1)\eps n$ vertices in $V_M$ and Breaker's total number of edges is less than $B^{\text{(tot)}}$, the average degree of a matching edge (where the degree of an edge is the sum of the degrees of its two vertices) in $V_M$ is at most $c_1 := \frac{4B^{\text{(tot)}}}{(i-1)\eps n} = O(1)$. Then less than a $0.1$ fraction of those matching edges have a degree larger than $10 c_1$. Therefore, with probability at least $1-\frac{1}{4}-0.2=0.55$, the vertex $y$ chosen according to (2) will be such that $yx_1$ or $yx_2$, respectively, is unclaimed, and either $y\in V_0$, or $y\in V_M$ and its matching edge $yy'$ has degree at most $10 c_1$. As the number of vertices in $V_0$ is $\omega(1)$, in the latter case the edge $y'z$ will be unclaimed with probability $1-o(1)$. 

All in all, for each $y$ chosen in part (2) the probability of successfully wrapping it up and completing the step is lower bounded by $0.5$. Therefore, the probability that $4 \log n$ consecutive choices of $y$ do not lead to the completion of (2) for the vertex $x_1$ or $x_2$, respectively, is upper bounded by $0.5^{4 \log n} = O(n^{-2})$.

Looking at the whole $i$-th phase, by the union bound, the probability that there are less than $\log n$ steps with unsuccessful attempts to claim $x_1x_2$ in~(1), and that each of them needs less than $8 \log n$ additional edge choices according to (2), is $1-o(1)$. Note that in the likely event Maker does at most $\frac{\eps n}{2} + 8 \log^2 n$ edge choices, and hence at most $\frac{\eps n}{2a} + \log^3n$ rounds are needed.
\end{proof}

\begin{claim} \label{c:prob-bi-pm}
For every $i<\frac{1}{\eps}$ we have $\operatorname{Pr}(\mathcal{B}_i|\, \mathcal{E}_{i-1} \land \mathcal{A}_i) \geq (1-o(1)) \varepsilon^{|D_{i-1}|}$.
\end{claim}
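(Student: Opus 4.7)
The plan is to follow the argument of Claim~\ref{c:prob-bj-min-k} from the mindegree-$k$ game, adapted to the more complex step structure of the perfect matching game.

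First, I would observe that $|D_{i-1}|=O(1)$: under $\mathcal{E}_{i-1}$, Breaker claims at most $(1+o(1))\frac{bn}{2a}$ edges throughout the whole game, so the sum of Breaker degrees is $O(n)$ and at most $\frac{20b}{a}+o(1)$ vertices ever reach Breaker degree $\frac{n}{10}$. Let $D_{i-1}^\ast\subseteq D_{i-1}$ consist of those vertices not yet in $V_M$ at the start of phase~$i$; then $D_{i-1}^\ast\subseteq V_0^{\mathrm{start}}$ (with $V_0^{\mathrm{start}}$ denoting $V_0$ at the start of phase~$i$) and $t:=|D_{i-1}^\ast|\le|D_{i-1}|$. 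Since vertices never leave $V_M$, the event $\mathcal{B}_i$ is equivalent to $D_{i-1}^\ast\subseteq X_i$, where $X_i$ denotes the set of vertices newly moved to $V_M$ during phase~$i$.

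The next step is to realise Maker's random $(x_1,x_2)$-picks through a uniformly random permutation $\pi$ of $V_0^{\mathrm{start}}$, drawn independently of all other randomness. Maintaining a pointer $p$ into $\pi$, whenever Maker needs to produce $(x_1^{(j)},x_2^{(j)})$ we advance $p$ and take the first two entries of $\pi$ still lying in the current $V_0^{(j)}$. A direct check shows that the pair produced this way is uniform over ordered pairs of $V_0^{(j)}$, so this coupling faithfully reproduces Maker's strategy. The key feature is that any entry of $\pi$ visited by the pointer during phase~$i$ belongs to $X_i$: it was either selected as some $x_1^{(j)}$ or $x_2^{(j)}$, or it had been removed from $V_0$ via the $y$- or $z$-channel at some earlier step. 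Letting $s_i$ denote the number of steps in phase~$i$, the proof of Claim~\ref{c:prob-ai-pm} gives that, conditional on $\mathcal{E}_{i-1}\wedge\mathcal{A}_i$, a.a.s.\ at most $\log n$ steps trigger branch~(2), so that the set $R^{yz}$ of $y,z$-removals has size $O(\log n)$ and $2s_i\geq\varepsilon n - O(\log n)$.

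Setting $K:=\lfloor\varepsilon n - c\log n\rfloor$ for a suitable constant $c$, on this a.a.s.\ event we have $\{\pi(1),\ldots,\pi(K)\}\subseteq X_i$. Since $\pi$ is a uniformly random permutation of $V_0^{\mathrm{start}}$, a standard hypergeometric computation gives
\begin{align*}
\Pr\bigl(D_{i-1}^\ast\subseteq\{\pi(1),\ldots,\pi(K)\}\bigr)
\;=\;\prod_{l=0}^{t-1}\frac{K-l}{|V_0^{\mathrm{start}}|-l}
\;\geq\;\left(\frac{\varepsilon n - c\log n}{n}\right)^{t}
\;\geq\;(1-o(1))\,\varepsilon^{|D_{i-1}|},
\end{align*}
using $t\le|D_{i-1}|$ and $\varepsilon\le 1$. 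Because $\varepsilon^{|D_{i-1}|}$ is a positive constant independent of $n$, a subtractive union bound against the $o(1)$ complement of the a.a.s.\ event yields the desired lower bound on $\Pr(\mathcal{B}_i\mid\mathcal{E}_{i-1}\wedge\mathcal{A}_i)$.

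The main delicacy is setting up the permutation-plus-pointer coupling and analysing it. Unlike the mindegree-$k$ game, where each iteration removes a single random vertex and the chosen set is automatically a uniform random subset of the right size, here the sets $V_0^{(j)}$ evolve through four distinct random mechanisms, and the decision between branches~(1) and~(2) depends on Breaker's responses, which themselves depend on Maker's earlier picks. The careful bookkeeping lies in verifying that the pointer rule reproduces Maker's strategy exactly, and that it advances by at most one extra position per $y,z$-removal so that the bound $2s_i\geq K$ forces $\{\pi(1),\ldots,\pi(K)\}\subseteq X_i$.
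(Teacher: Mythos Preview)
Your approach is correct and follows the same route as the paper: reduce to the hypergeometric computation of Claim~\ref{c:prob-bj-min-k} by arguing that a suitable $\varepsilon n$-sized random subset of $V_0^{\mathrm{start}}$ lands inside $X_i$. The paper's one-line proof simply asserts that the $(x_1,x_2)$-choices of phase~$i$ ``form a set of size $\varepsilon n$ chosen uniformly at random'' from $V_0^{\mathrm{start}}$; your permutation-plus-pointer coupling is precisely the device that makes this informal assertion rigorous, since the $y,z$-removals in branch~(2) perturb $V_0$ between steps and prevent the chosen set from being \emph{literally} uniform. Your observation that every position the pointer passes lies in $X_i$ (either as an $x$-pick or as a vertex already removed via the $y,z$-channel), together with the bound $2s_i\ge \varepsilon n - O(\log n)$ from the proof of Claim~\ref{c:prob-ai-pm}, is exactly what is needed to push the hypergeometric bound through; the subtractive union bound at the end is the right way to absorb the conditioning on $\mathcal{A}_i$ and the a.a.s.\ event, since $\varepsilon^{|D_{i-1}|}$ is bounded below by a positive constant.
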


\begin{proof}
The vertices which are chosen in the $i$-th phase 
according to (1) and hence moved to $V_M$ by assuming 
$\mathcal{A}_i$ form a set of size $\varepsilon n$
chosen uniformly at random from the set $V_0$ as given at the beginning of the phase. Hence, the claimed inequality
can be proven exactly the same way as in 
Claim~\ref{c:prob-bj-min-k}.
\end{proof}

\begin{claim} \label{c:prob-last-ai-pm}
We have $\operatorname{Pr}(\mathcal{A}_{1/\eps}|\, \mathcal{E}_{1/\eps-1}) = 1-o(1)$.
\end{claim}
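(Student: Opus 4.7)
The plan is to split the last phase into its two natural pieces: the tail of Stage~I, during which $|V_M|$ grows from $(1/\eps-1)\eps n$ to $n-2n^{0.7}$, and all of Stage~II. For the Stage~I tail, the argument of Claim~\ref{c:prob-ai-pm} should apply essentially verbatim; the only verification needed is that its probability estimates still go through while $|V_0|$ shrinks to $2n^{0.7}$, which is fine since those estimates only require $|V_0|=\omega(1)$. I expect to conclude that with probability $1-o(1)$ this part completes without forfeit and uses at most $\frac{\eps n-2n^{0.7}}{2a}+O(\log^3 n)$ rounds.

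The heart of the proof is Stage~II. Conditional on $\mathcal{E}_{1/\eps-1}$ and on the global bound on $B^{\text{(tot)}}$, I would first reproduce the computation from the paragraph preceding Claim~\ref{c:prob-ai-pm} to show that every $v\in V_0$ satisfies $d_B(v)\leq\left(\frac{1}{5}+o(1)\right)n$ throughout phase $1/\eps$: vertices attaining Breaker degree $n/10$ in any phase $j\leq 1/\eps-2$ were already moved to $V_M$ by $\mathcal{B}_{j+1}$, while phases $1/\eps-1$ and $1/\eps$ together can add at most $n/10+o(n)$ extra Breaker degree to any single vertex.

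For a single step of Stage~II I would then proceed in two parts. Since $|V_M|=n-o(n)$, each of the $2n^{0.1}$ attempts to claim an edge $x_is_j^{(i)}$ succeeds with probability $\geq 1-d_B(x_i)/|V_M|\geq 3/4-o(1)$, so Lemma~\ref{lemma:Chernoff_modified}(a) yields $|S_1|,|S_2|\geq n^{0.1}/2$ with probability $1-\exp(-\Omega(n^{0.1}))$, where $S_i=\{j:\,x_is_j^{(i)}\text{ is Maker's after the first part}\}$. For the second part, each $t_j^{(i)}$ is marginally (essentially) uniform on $V_M$, so for every fixed $(j',j'')$ the probability that $t^{(1)}_{j'}t^{(2)}_{j''}$ is a Breaker edge is at most $2B^{\text{(tot)}}/|V_M|^2=O(1/n)$; a first-moment bound over the at most $n^{0.7}\cdot n^{0.2}=n^{0.9}$ such edges arising across all Stage~II steps shows that a.a.s.~none of them is Breaker-owned. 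Given these two events, each random draw of $(j',j'')\in[n^{0.1}]^2$ lies in $S_1\times S_2$ with probability at least $1/4$, in which case the step immediately completes. Hence any step fails with probability at most $(3/4)^{n^{0.1}}=\exp(-\Omega(n^{0.1}))$, and a union bound over the $n^{0.7}$ Stage~II steps turns this into $o(1)$.

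Since each Stage~II step consumes at most $3n^{0.1}$ edges, Stage~II contributes $O(n^{0.8}/a)=o(n^{0.9})$ rounds, and adding this to the Stage~I tail estimate gives the required bound of $\frac{\eps n}{2a}+n^{0.9}$. The most delicate point, and where I would take most care, is the marginal-uniformity claim for the $t_j^{(i)}$: they come from disjoint matching pairs rather than being truly i.i.d.~uniform on $V_M$, but since only $n^{0.1}$ pairs are drawn out of $\Theta(n)$, the deviation from uniformity is $o(1)$ and does not spoil the $O(1/n)$ bound on the Breaker-edge probability.
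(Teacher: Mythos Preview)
Your overall split into the Stage~I tail and Stage~II matches the paper, but there are two problems.

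The minor one: the Stage~I tail is \emph{not} handled by Claim~\ref{c:prob-ai-pm} ``essentially verbatim''. That claim relied on $|V_0|=\Theta(n)$ to get a constant expected number of failures in (1); here $|V_0|$ shrinks to $2n^{0.7}$, so each step fails in (1) with probability only $O(n/|V_0|^2)=O(n^{-0.4})$, and the expected total number of failures over the $\Theta(n)$ steps is $O(n^{0.6})$, not $O(1)$. Your claimed $O(\log^3 n)$ extra rounds is therefore wrong; the correct overhead is of order $n^{0.6}\log n$. This is harmless for the final bound since it is still $o(n^{0.9})$, but the argument as written does not go through; the paper redoes the Markov step with the weaker density bound.

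The real gap is in Stage~II. Your first-moment computation (showing the expected number of Breaker-owned edges $t^{(1)}_{j'}t^{(2)}_{j''}$ is $o(1)$) only accounts for edges PhantomBreaker held \emph{before} the step, when the $t_j^{(i)}$ were still unrevealed and hence approximately uniform. But PhantomBreaker has full information: as Maker attempts the $2n^{0.1}$ edges $x_is_j^{(i)}$ he learns the $t_j^{(i)}$ one by one, and during those edge choices plus the subsequent $n^{0.1}$ draws he gets $\Theta(n^{0.1})$ moves, all of which he may spend on edges of the form $t^{(1)}_{j'}t^{(2)}_{j''}$. So the assertion ``a.a.s.~none of them is Breaker-owned'' is simply false, and your conclusion that a draw in $S_1\times S_2$ ``immediately completes'' the step is unjustified. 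The paper handles this differently: it restricts to indices $j$ whose matching edge $s_j^{(i)}t_j^{(i)}$ has Breaker edge-degree at most a constant $10c_2$ (still a $0.9$-fraction), which bounds the number of \emph{pre-existing} Breaker edges among the relevant $t$-pairs by $O(n^{0.1})$, and then explicitly adds the at most $3n^{0.1}$ edges Breaker can claim during the step; the resulting $O(n^{0.1})$ bad pairs out of $\tfrac14 n^{0.2}$ good pairs yields the $(3/4+o(1))^{n^{0.1}}$ failure bound. Your marginal-uniformity route can be repaired along the same lines---just observe that the $\Theta(n^{0.1})$ adaptive Breaker edges block only an $O(n^{-0.1})$ fraction of the $n^{0.2}$ candidate pairs---but as written the adaptive contribution is missing.
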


\begin{proof}
We assume that $\mathcal{E}_{1/\eps-1}$ holds.
At first we consider the first part of the last phase
consisting of all the remaining rounds played in Stage I.
Throughout this part of the game, 
the number of pairs of vertices in $V_0$ is $\Omega(n^{1.4})$ and the number of edges Breaker claimed is not more than $B^{\text{(tot)}}$, which is linear in $n$. Hence, the expected number of unsuccessful attempts to claim $x_1x_2$ in (1) is $O(n^{0.6})$. 
Due to Lemma~\ref{lemma:Chernoff_modified}(c), the probability that this number is larger than $n^{0.6}\log n$ is upper bounded by $o(1)$.

In each of those failed attempts we proceed to (2),
and we can do the same analysis as in the proof of Claim~\ref{c:prob-ai-pm} with $c_1$
being replaced with $c_2 := \frac{4B^{\text{(tot)}}}{(1 - \eps) n} = O(1)$. It then follows that
the probability that there are less than $n^{0.6}\log n$ steps with unsuccessful attempts to claim $x_1x_2$ in~(1), and that each of them needs less than $8 \log n$ additional edge choices according to (2), is $1-o(1)$. 
Moreover, in this likely event, Maker does less than
$\frac{\varepsilon n}{2}$ edge choices,
and hence plays less than $\frac{\varepsilon n}{2a}$ rounds
before reaching Stage II.

We next consider the second part of the last phase, i.e.~Stage II.
We still have that Breaker's degree of $x_1$ and $x_2$ is upper bounded by $\frac{n}{4}$, and less than a $0.1$ fraction of the matching edges in $V_M$ have a degree larger than $10 c_2$. Hence, with probability at least $1-\frac{1}{4}-0.2 = 0.55$, Maker's attempt to claim the edge $x_i s^{(i)}_j$ will be successful, while additionally the degree of the matched edge $s^{(i)}_j t^{(i)}_j$ is at most $10 c_2$.

Then, for $i \in \{1,2 \}$, the probability that out of $n^{0.1}$ matching edges $s^{(i)}_1 t^{(i)}_1$, $s^{(i)}_2 t^{(i)}_2$, \dots, $s^{(i)}_{n^{0.1}} t^{(i)}_{n^{0.1}}$ the above two conditions are satisfied for at least $\frac12 n^{0.1}$ of them 
is at least $1- \exp(- \Theta(n^{0.1}))$, by applying Lemma~\ref{lemma:Chernoff_modified}(a) with $p=0.55$.
If we assume that this holds for both $i=1,2$, the number of pairs $j', j''$ for which the above conditions hold for both $s^{(1)}_{j'} t^{(1)}_{j'}$ and $s^{(2)}_{j''}t^{(2)}_{j''}$ is at least $\frac14  \left(
n^{0.1} \right)^2$. Among these pairs the number of edges claimed by Breaker is upper bounded by $2\cdot n^{0.1}\cdot 10c_3 + 3n^{0.1}$, due to the bound on the edge degrees and since each step in Stage II lasts at most $3n^{0.1}$ rounds. Hence, the probability that for all $n^{0.1}$ choices of $j', j''$ the step of Stage~II is not complete, resulting in forfeiture of Maker, is at most $\left( \frac34 + o(1) \right)^{n^{0.1}} + \exp(- \Theta(n^{0.1}))$. 

If we apply the union bound for all $O(n^{0.7})$ steps of Stage~II, the probability that Maker completes the whole stage, and thus the whole game, without forfeiting, is still $1-o(1)$. As Stage II lasts $O(n^{0.8})$ rounds, the last whole phase lasts less than 
$\frac{\varepsilon n}{2a} + n^{0.9}$ rounds.
This proves the claim.
\end{proof}

Finally, note that $\operatorname{Pr}(\mathcal{B}_{1/\varepsilon}|\, \mathcal{E}_{1/\varepsilon-1} \land \mathcal{A}_{1/\varepsilon}) = 1$, since
under the assumption of $\mathcal{A}_{1/\varepsilon}$ all vertices belong to $V_M$ at the end of the last phase. Hence, putting everything together similarly to the proof of Theorem~\ref{thm:maker_degree}~\ref{m_deg_large_b}, we conclude that Maker wins with probability at least
$$
(1-o(1))\eps^{\sum_j |D_{j-1}|} 
\geq \frac{1}{(10b/a)^{11\cdot b/a}}
$$
where we use that the number of vertices with Breaker's degree at least $\frac{n}{10}$ cannot be larger than $\frac{2 B^{\text{(tot)}}}{0.1n} < \frac{11b}{a}$.
\end{proof}

\bigskip

%%%%%%%%%%%%%%%%%%%%%%%%%%%%%%%%%%%%%%%%%%%%%%%%%%%%%
%%
%%   Perfect Matching Game (smaller b)
%%
%%%%%%%%%%%%%%%%%%%%%%%%%%%%%%%%%%%%%%%%%%%%%%%%%%%%%

\subsection{Perfect matching game with $b\leq 2a$}

\begin{proof}[Proof of Theorem~\ref{thm:maker_matching}~\ref{match_small_b}]
Let Maker play according to the same strategy
as for Theorem~\ref{thm:maker_matching}~\ref{match_large_b}, but with the following difference:
in Stage I Maker only forfeits the game if
more than $0.5n^{0.7}$ edge choices happen by executions of (2) when running 
over all failures made in (1).
Note that this in particular ensures that Maker
makes less than $\frac{n}{2} - 0.5n^{0.7}$
edge choices during Stage I.
Additionally this means that we ignore the splitting
of Maker's strategy into phases and the forfeiting condition regarding the length of such a phase.

\smallskip

\textbf{Strategy discussion:} We first prove the following.

\begin{claim}
	In Stage I, 
	Maker a.a.s. needs at most $n^{0.6}\log^3 n$
	edge choices executed according to (2)
	and, in particular, she does not forfeit
	in this stage.
\end{claim}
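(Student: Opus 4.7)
The plan is to bound $\mathbb{E}[X]$, where $X$ is the total number of edge choices made in step (2) throughout Stage I, and conclude via Markov's inequality. The argument rests on three ingredients: a uniform bound on Breaker's maximum degree, a lower bound on the success probability of each trial in (2), and an upper bound on the per-step failure probability in (1).

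If Maker has not forfeited yet, she has claimed at most $(n/2 - n^{0.7}) + 0.5n^{0.7} = n/2 - 0.5n^{0.7}$ edges in Stage I. Since $b \leq 2a$, Breaker has then played at most $(b/a)(n/2 - 0.5n^{0.7}) \leq n - \Omega(n^{0.7})$ edges; in particular $d_B(v) \leq n - \Omega(n^{0.7})$ for every vertex $v$. A computation analogous to the one in the proof of Claim~\ref{c:prob-ai-pm} then shows that a single trial in (2) for $x_i$ succeeds with probability at least $(n-1-d_B(x_i))/(n-1) - O(1/|V_0|)$, which is $\Omega(n^{-0.3})$ provided $|V_0| \geq n^{0.7}$; hence, the expected number of edge choices required to complete (2) after any given failure is $O(n^{0.3})$.

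The failure probability at step $s$ is at most $e(B[V_0^{(s)}])/\binom{|V_0^{(s)}|}{2} \leq n/\binom{|V_0^{(s)}|}{2}$. Using the coarse bound $|V_0^{(s)}| \geq n^{0.7}$, Lemma~\ref{lemma:Chernoff_modified}(c) gives that the total number of failures $F$ satisfies $F \leq n^{0.6}\log n$ a.a.s., which then refines to $|V_0^{(s)}| \geq (n-2s)/2$ throughout Stage I. Combining these estimates, the per-step expected contribution to $X$ is at most $O(n/(n-2s)^2) \cdot O(n^{0.3}) = O(n^{1.3}/(n-2s)^2)$, and since $\sum_{s=1}^{n/2 - n^{0.7}} 1/(n-2s)^2 = O(n^{-0.7})$, we obtain $\mathbb{E}[X] = O(n^{0.6})$. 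Markov's inequality then yields $\Prob(X > n^{0.6}\log^3 n) = O(1/\log^3 n) = o(1)$, which proves the claimed bound and, since $n^{0.6}\log^3 n \ll 0.5n^{0.7}$, rules out forfeiture a.a.s.

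The main obstacle is the circular dependence between the hypotheses: the degree bound assumes no forfeiture, the refined bound on $|V_0^{(s)}|$ assumes few failures, and the failure bound in turn assumes $|V_0^{(s)}|$ is large. This is resolved by a stopping-time argument that terminates the analysis at the first moment when any of these hypotheses is about to be violated; the same Markov-type estimates, applied to the stopped process, then show that this stopping time exceeds the end of Stage I a.a.s.
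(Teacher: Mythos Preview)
Your argument is correct but follows a genuinely different route from the paper's. The paper establishes (via the coarse bound $|V_0|\geq n^{0.7}$) that a.a.s.\ at most $n^{0.6}\log n$ failures occur in (1), and then handles the cost of (2) by a case distinction: vertices with $d_B(x_i)<0.1n$ (``good'') need only $O(\log n)$ trials each with probability $1-O(n^{-2})$, while the $O(1)$ ``bad'' vertices with $d_B(x_i)\geq 0.1n$ need at most $O(n^{0.3}\log^2 n)$ trials each; a union bound over failures then gives $O(n^{0.6}\log^2 n)$ total. You instead avoid this dichotomy by accepting the worst-case $O(n^{0.3})$ expected trials uniformly for every failure, and compensate by using the \emph{refined} per-step failure probability $O(n/(n-2s)^2)$, whose sum over steps is only $O(n^{0.3})$ rather than $O(n^{0.6})$; the product then still yields $\mathbb{E}[X]=O(n^{0.6})$, and Markov finishes. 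Your approach is more streamlined and entirely expectation-based, while the paper's high-probability case analysis is more explicit about where the edge choices actually accumulate and yields a slightly sharper $O(n^{0.6}\log^2 n)$ bound. Both are valid; note that your averaging argument giving the $O(1/|V_0|)$ correction term (namely, $\sum_{y\in V_M} d_B(y')/((n-1)|V_0|)\leq 2e(B)/((n-1)|V_0|)$) is in fact a cleaner computation than the one in Claim~\ref{c:prob-ai-pm} you cite as analogous, and your stopping-time resolution of the circularity is exactly the right way to make both arguments rigorous.
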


\begin{proof}
Throughout Stage I we have $|V_0|\geq n^{0.7}$
and $e(B)\leq \frac{b}{a}\left(\frac{n}{2} - 0.5n^{0.7}\right) \leq n - n^{0.7}$.
Hence, whenever Maker chooses a pair $x_1,x_2$ according to (1), the probability
that it is already claimed by PhantomBreaker is bounded
by $O(n/|V_0|^2)=O(n^{-0.4})$. 
Therefore, in expectation there are at most $O(n^{0.6})$
failures happening in (1), and
hence, applying Markov's inequality,
a.a.s.~at most $n^{0.6}\log n$ executions of (1) happen to be a failure.
We condition on this from now on.
Considering all of these executions of (1) with a failure, we aim to show that a.a.s.~at most $O(n^{0.6}\log^2 n)$ edge choices happen because of (2).

To do so, we distinguish the vertices according to their Breaker degree. We call a vertex $x_i$ 
\textit{bad} if $d_B(x_i)\geq 0.1n$ holds
at the moment when $x_i$ is chosen in (1), 
and we note that there can be only
$O(1)$ bad vertices throughout the whole game;
otherwise we call $x_i$ \textit{good}.

If a vertex $x_i$ is good at the moment of choice,
we then have $d_F(x_i,V_0)>0.4n$ or $d_F(x_i,V_M)>0.4n$
in each of the following $50 \log n$ rounds (we will see that is unlikely to need more rounds for the execution of (2)).
If $d_F(x_i,V_0)\geq 0.4n$, then in each repetition within (2)
we have with probability at least $0.4$ that $y\in V_0$ and $x_iy$ is free. Hence, 
applying Lemma~\ref{lemma:Chernoff_modified}(a)
with $p=0.4$ and $\delta=0.9$,
we see that with probability $1-O(n^{-2})$ after at most $20\log n $ repetitions the execution of (2) for $x_i$ is done.
If, otherwise, $d_F(x_i,V_M)\geq 0.4n$,
then first note that $|V_M|\geq 0.4 n$ (and $e(B)<n$),
and therefore at most a 0.1 fraction of the vertices in $V_M$
can have Breaker degree larger than $50$. Hence,
when one of the repetitions within (2) is executed,
with probability larger than $0.3$ we have that
Maker chooses a vertex $y$ such that
$x_iy$ is free and the current matching partner $y'$ of $y$
satisfies $d_B(y')\leq 50$, so that afterwards
the probability of having $y'z$ being free is $1-o(1)$.
Hence, each repetition succeeds with probability larger than $0.2$,
and we can conclude similarly that with probability $1-O(n^{-2})$ 
the execution of (2) for $x_i$ will be finished after at most $40 \log n$ edge choices.

If a vertex $x_i$ is bad at the moment of choice,
we still have
$d_F(x_i,V_0)>0.4n^{0.7}$ or $d_F(x_i,V_M)>0.4n^{0.7}$,
since $e(B)\leq n-n^{0.7}$.
If $d_F(x_i,V_0)\geq 0.4n^{0.7}$, then in each repetition within (2)
we have with probability $\Omega(n^{-0.3})$ that $y\in V_0$ and $x_iy$ is free. Hence, 
again applying Lemma~\ref{lemma:Chernoff_modified}(a),
we see that with probability $1-O(n^{-2})$ after at most $n^{0.3}\log^2 n$ repetitions, the execution of (2) for $x_i$ is done.
If otherwise $d_F(x_i,V_M)\geq 0.4n^{0.7}$
then, since $e(B)<n$, we see that
only $o(n^{0.7})$ vertices in $V_M$
can have Breaker degree larger than $n^{0.4}$. Hence,
when one of the repetitions within (2) is done,
with probability larger than $(0.4 - o(1))n^{-0.3}$
Maker chooses a vertex $y\in V_M$ such that
$x_iy$ is free and the current matching partner $y'$ of $y$
satisfies $d_B(y')\leq n^{0.4}$. In particular, for such $y'$ 
we afterwards have that
the probability of $y'z$ being free is $1-o(1)$.
Hence, each repetition succeeds with probability at least $0.3n^{-0.3}$,
and we can conclude similarly that with probability $1-O(n^{-2})$ 
the execution of (2) for $x_i$ will be finished after at most $n^{0.3}\log^2 n$ edge choices.

Now, doing a union bound over all $O(n^{0.6}\log n )$ steps with a failure in (1), we see that a.a.s.~whenever $x_i$ is good we need an extra of $O(\log n)$ edge choices, while if $x_i$ is bad we need at most $O(n^{0.3}\log^2 n)$ additional edge choices. Since the number of
steps involving bad vertices is bounded by $O(1)$, we see that
Maker succeeds with Stage I and makes at most $O(n^{0.6}\log^2 n)$
edge choices according to (2).
\end{proof}

Before proceeding to Stage~II, we also prove the following claim on Breaker's degrees.

\begin{claim}
At the end of Stage I, a.a.s.~every vertex $v\in V_0$ satisfies
$d_B(v)<n^{0.9}$.
\end{claim}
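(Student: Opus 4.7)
The plan is to bound the expected total Breaker-degree on $V_0$ (at the end of Stage~I) by $O(n^{0.7}\log n)$ and then apply Markov's inequality. Write $V_0^{(s)}$ for the set $V_0$ at the start of Maker's $s$-th Stage~I step, so that $V_0^{(1)}=V(K_n)$, $V_0^{(T+1)}$ equals the set $V_0$ at the end of Stage~I with $T=\tfrac{n}{2}-n^{0.7}$, and each step removes at least $2$ vertices from $V_0$, giving $|V_0^{(s)}|\le n-2(s-1)$. I first condition on the event from the previous claim, which holds a.a.s. Under this event Maker plays at most $\tfrac{n}{2}-0.5\,n^{0.7}$ edges in Stage~I, so Breaker plays at most $\tfrac{b}{a}\bigl(\tfrac{n}{2}-0.5\,n^{0.7}\bigr)+b\le n$ edges, using $b\le 2a$.

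Next I prove the key estimate: for every vertex $v$, every Breaker strategy, and every $s^*\in\{0,\dots,T\}$, conditionally on the history up to the end of step $s^*$ and on $v\in V_0^{(s^*+1)}$,
\[
\Pr\!\left[v\in V_0^{(T+1)} \mid \text{history}\right] \;\le\; \frac{2n^{0.7}}{n-2s^*}.
\]
Indeed, $v\in V_0^{(T+1)}$ forces $v\notin\{x_1,x_2\}$ in each of the remaining $T-s^*$ steps, and since Maker picks $\{x_1,x_2\}$ uniformly from the current $V_0$, the conditional probability of avoidance in step $s$ equals $\tfrac{|V_0^{(s)}|-2}{|V_0^{(s)}|}$. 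Using $|V_0^{(s)}|\le|V_0^{(s^*+1)}|-2(s-s^*-1)$ together with the monotonicity of $x\mapsto\tfrac{x-2}{x}$, the resulting product telescopes to $\tfrac{|V_0^{(s^*+1)}|-2(T-s^*)}{|V_0^{(s^*+1)}|}$, and since $|V_0^{(s^*+1)}|\le n-2s^*$ this is at most $\tfrac{n-2T}{n-2s^*}=\tfrac{2n^{0.7}}{n-2s^*}$.

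Setting $X:=\sum_{v\in V_0^{(T+1)}}d_B(v)=\sum_{r}\sum_{i=1}^{b}|e_{r,i}\cap V_0^{(T+1)}|$, where $e_{r,i}$ denotes Breaker's $i$-th edge in round~$r$, I apply the estimate above (via the tower property) to each of the two endpoints of each Breaker edge. Letting $s^*(r)$ be the number of Stage~I steps completed by the end of round $r$'s Maker moves and $R$ the number of rounds in Stage~I, this yields
\[
\mathbb{E}[X] \;\le\; 4bn^{0.7}\sum_{r=1}^{R}\mathbb{E}\!\left[\tfrac{1}{n-2s^*(r)}\right].
\]
Since $s^*(r)\le\min(ar,T)$, the summand is at most $\tfrac{1}{\max(n-2ar,\,2n^{0.7})}$; bounding the resulting sum by an integral gives $\mathbb{E}[X]=O(n^{0.7}\log n)$. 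Markov's inequality then gives $\Pr[X\ge n^{0.9}]=O(n^{-0.2}\log n)=o(1)$, which finishes the proof since the existence of some $v\in V_0^{(T+1)}$ with $d_B(v)\ge n^{0.9}$ would force $X\ge n^{0.9}$.

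The main technical subtlety lies in the second paragraph: the conditional probability estimate must be robust to Breaker's full adaptivity and to the fact that Maker's (2)-branch may remove additional vertices from $V_0$ within a single step. What lets the telescoping go through is precisely the coarse bound $|V_0^{(s)}|-|V_0^{(s+1)}|\ge 2$, which holds regardless of how many extra vertices the (2)-branch removes, and which is also what provides the deterministic inequality $s^*(r)\le ar$ needed to evaluate the final sum.
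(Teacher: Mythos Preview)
Your proof is correct and takes a genuinely different route from the paper's. The paper argues at a single checkpoint: at the moment $|V_0|=n^{0.85}$, since $e(B)<n$ there are at most $4n^{0.1}$ vertices of Breaker-degree at least $0.5\,n^{0.9}$, and no other vertex has enough rounds left to reach degree $n^{0.9}$; for each of these $O(n^{0.1})$ dangerous vertices, the probability of remaining in $V_0$ through the shrinkage from $n^{0.85}$ to $O(n^{0.7})$ is $O(n^{0.7}/n^{0.85})$, and a union bound finishes. Your approach instead tracks every Breaker edge individually, bounding the expected total Breaker-degree on the terminal $V_0$ via a telescoping survival probability and a harmonic-type sum over rounds, then applies Markov. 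The paper's argument is shorter and more direct for this particular claim; yours yields the stronger quantitative statement $\mathbb{E}\bigl[\sum_{v\in V_0}d_B(v)\bigr]=O(n^{0.7}\log n)$ and would adapt more readily to other degree thresholds. One small remark: the conditioning on the previous claim's event is not actually needed for the expectation computation, since the forfeiture rule already caps the number of Stage~I rounds deterministically at roughly $(T+0.5\,n^{0.7})/a$; dropping that conditioning sidesteps the (minor) concern that conditioning on an event involving Maker's random choices might distort the survival-probability bound.
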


\begin{proof}
Note that at the moment when $|V_0|=n^{0.85}$ holds,
there can be at most $4n^{0.1}$ vertices $v_1,\ldots,v_t\in V_0$
with degree at least $0.5n^{0.9}$, since $e(B)<n$.
All other vertices do not have the chance of reaching
degree $n^{0.9}$ until the end of Stage I,
since this stage lasts only $O(n^{0.85})$ further rounds from this moment on.
Moreover, in the remaining part of Stage I we have that
all but at most
$n^{0.85}-2n^{0.7}-O(n^{0.6}\log^2 n)$ vertices of $V_0$ will be moved
to $V_M$ by rule (1). Therefore, the probability that at least one of the vertices $v_1,\ldots,v_t$ is not moved to $V_M$ until the end of Stage~I is bounded from above by
$4n^{0.1} \cdot \frac{O(n^{0.7})}{n^{0.85}} = o(1)$.
Thus, a.a.s.~$v_1,\ldots,v_t\notin V_0$ at the end of Stage I, while no other vertex in $V_0$ can reach Breaker degree of at least $n^{0.9}$.
\end{proof}

It remains to prove that Maker does a.a.s.~not forfeit Stage II. Using the degree bound from the above claim and the fact that by the strategy description Stage II lasts $O(n^{0.8})$ rounds, we see that whenever a pair $x_1,x_2$ is chosen, Breaker's degree of $x_1$ and $x_2$ is much smaller than the bound $\frac{n}{4}$ that we used in the analysis of Stage II in the proof of Claim~\ref{c:prob-last-ai-pm}; see the second part of the last phase therein. Now, following the same analysis word-by-word,
we see that Maker succeeds with probability $1-o(1)$.
\end{proof}

For our next proof regarding the connectivity game,
let us keep the following information.

\begin{remark}\label{remark:pm}
In the above proof
Maker a.a.s.~wins the $(a:b)$ perfect matching game,
with $b\leq 2a$,
against PhantomBreaker within $\frac{n}{2a} + O(n^{0.8})$
rounds.
\end{remark}

\bigskip

%%%%%%%%%%%%%%%%%%%%%%%%%%%%%%%%%%%%%%%%%%%%%%%%%%%%%
%%
%%   Connectivity Game (larger b)
%%
%%%%%%%%%%%%%%%%%%%%%%%%%%%%%%%%%%%%%%%%%%%%%%%%%%%%%

\subsection{Connectivity game with $b > 2a$}

\begin{proof}[Proof of Theorem~\ref{thm:maker_connectivity}~\ref{conn_large_b}]
Let $\varepsilon = \frac{a}{8b}$.
We consider the following randomized strategy for Maker:

Maker plays iteratively the same sequence of moves, as described in (1) and (2) below. If she does not manage to execute this sequence $n-1$ times within $1.1\cdot \frac{n}{a}$ rounds, she forfeits the game. 
Moreover, she splits her strategy into $\frac{1}{\varepsilon}$ stages $\mathcal{S}_i$, with $1\leq i\leq \frac{1}{\varepsilon}$, as follows:

\smallskip

\textbf{Stage $\mathcal{S}_i$:} 
Maker repeats the following sequence exactly
$\varepsilon(n-1)$ times, where again we avoid the use of rounding signs as they are not crucial for our calculations:

\begin{itemize}
\item[(1)] Maker chooses a vertex $v$ uniformly at random among all vertices of $K_n$ which were not chosen this way during 
stage $\mathcal{S}_i$. 
Let $C_v$ be the component in Maker's current graph $M$ which contains $v$.
\item[(2)] For up to $n^{0.2}$ edge choices, take an edge from
$E(C_v,V\setminus C_v)$ u.a.r.~from all such edges, but only until hitting an edge which is free.
If a free edge $e$ appears,
Maker claims it and proceeds to the next repetition of (1) and (2). Otherwise, i.e.~if no edge choice is successful, she forfeits the game.
\end{itemize}

\medskip

\textbf{Strategy discussion:}
We first note that Maker never claims an edge which closes a cycle.
Therefore, if the above sequence is executed $n-1$ times without forfeiting, then Maker wins the connectivity game. In the following, we aim to show 
that this indeed happens
with probability at least $\frac{1}{(10\cdot b/a)^{4\cdot b/a}}$.

\medskip

In order to analyse the strategy, we first introduce some notation and a useful colouring on the set of vertices. Assume that the game is in progress.
\begin{itemize}
\item We call a vertex $v$ \textit{bad} if $d_B(v)\geq \frac{n}{3}$.
\item When a vertex turns bad, we immediately give it the colour \textit{red}. When Maker in (1) chooses a vertex $v$ which is red, it gets coloured \textit{blue} immediately. 
\item We call a component $C$ in Maker's graph 
\textit{annoying} if
$\sum_{v\in C} d_B(v) \geq |C|\cdot n^{0.4}$.
\item We call a component $C$ in Maker's graph \textit{dangerous} if
$\sum_{v\in C} d_B(v) \geq (|C|-0.3) n$.
\end{itemize}

Moreover, note that during the whole process, since at most $1.1\cdot \frac{n}{a}$ rounds are played, Breaker claims at most $1.1\cdot \frac{bn}{a}$ edges. Therefore, 
only less than $\frac{4b}{a}$ vertices can become bad/red. 
Moreover, components of size at least $\frac{5b}{a}$ cannot become dangerous.
We next prove a useful claim that helps to bound the length of each stage.

\begin{claim}\label{clm:connectivity_number.rounds}
For $1 \leq i\leq \frac{1}{\varepsilon}$
the following holds:
$$
\Prob\left(\text{Maker does not forfeit in Stage $\mathcal{S}_i$, and $\mathcal{S}_i$ lasts more than $1.01\cdot \frac{\varepsilon n}{a}$ rounds} \right) = o(1).
$$
\end{claim}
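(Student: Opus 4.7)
The plan is to show that, asymptotically almost surely, the total number $F$ of failed edge choices during Stage $\mathcal{S}_i$ is at most $0.01\,\varepsilon n$. Since Maker performs exactly $\varepsilon(n-1)$ successful claims in the stage unless she forfeits, this bound gives at most $\varepsilon(n-1)+F\leq 1.01\,\varepsilon n$ total edge attempts in the stage, hence at most $1.01\,\varepsilon n/a$ rounds. By Markov's inequality, it is therefore enough to prove $\mathbb{E}\!\bigl[F\cdot \mathbf{1}_{\{\text{no forfeit in }\mathcal{S}_i\}}\bigr]=o(\varepsilon n)$.

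For a single repetition $j$ in Stage $\mathcal{S}_i$, Maker picks $v_j$ uniformly at random from the unchosen set $U_j$ (with $|U_j|\geq (1-\varepsilon)n$) and then draws at most $n^{0.2}$ random edges from $E(C_{v_j},V\setminus C_{v_j})$ until a free one appears. Writing $b_C:=|E_B(C,V\setminus C)|$ and $g(v):=b_{C_v}/(|C_v|(n-|C_v|))$, the expected number of failures in that repetition is at most $\min\!\bigl(g(v_j)/(1-g(v_j)),\,n^{0.2}\bigr)$. Using the global forfeiture rule, $e(B)\leq 1.1\,bn/a=O(n)$ throughout the game, and consequently $\sum_C b_C\leq \sum_v d_B(v)=2e(B)=O(n)$ where $C$ ranges over the Maker components.

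I would then split the repetitions into cases, using the coloring and annoying/dangerous classification from the strategy discussion. First, if $v_j$ is bad, there are only $O(1)$ such vertices during the whole game and each bad-$v_j$ repetition contributes at most $n^{0.2}$ failures, for a total of $O(n^{0.2})$. Second, if $v_j$ is non-bad with $|C_{v_j}|=1$, then $g(v_j)\leq d_B(v_j)/(n-1)\leq \tfrac{1}{3}$ and the expected failures per repetition are $O(d_B(v_j)/n)$; summing over all such $v_j$ and using $\sum_v d_B(v)=O(n)$ gives an overall $O(1)$. Third, if $v_j$ is non-bad with $|C_{v_j}|\geq 2$ and $C_{v_j}$ is non-annoying, so $b_{C_{v_j}}\leq \sum_{v\in C_{v_j}}d_B(v)<|C_{v_j}|n^{0.4}$, then $g(v_j)\leq n^{0.4}/(n-|C_{v_j}|)\leq n^{-0.1}$ whenever $n-|C_{v_j}|\geq n^{1/2}$, so these repetitions contribute $O(\varepsilon n^{0.9})$ expected failures in total. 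Fourth, if $C_{v_j}$ is annoying, then by pigeonhole the total size of annoying components at any moment is at most $2e(B)/n^{0.4}=O(n^{0.6})$; hence the probability that $v_j$ lies in an annoying component is $O(n^{-0.4})$, giving $O(\varepsilon n^{0.6})$ such repetitions in expectation and a total of $O(\varepsilon n^{0.8})$ failures.

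The only remaining case is the tail of $\mathcal{S}_{1/\varepsilon}$ when the number of Maker components has dropped below $n^{1/2}+1$, so that $n-|C_{v_j}|$ may be smaller than $n^{1/2}$ and the third-case bound fails. Since each successful repetition reduces the number of Maker components by exactly one, and since the number of components stays at least $\varepsilon n$ throughout every stage $\mathcal{S}_i$ with $i<1/\varepsilon$, at most $n^{1/2}$ such tail repetitions can occur; they contribute at most $n^{1/2}\cdot n^{0.2}=n^{0.7}$ failures. Summing over all cases yields $\mathbb{E}\!\bigl[F\cdot \mathbf{1}_{\{\text{no forfeit}\}}\bigr]=O(n^{0.9})=o(\varepsilon n)$, and Markov's inequality concludes. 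The main obstacle is the clean handling of this final tail, together with verifying that the pigeonhole bounds on annoying and bad structures hold uniformly at \emph{every} moment during the stage and not just at its boundaries; I expect the latter to follow from a straightforward worst-case estimate applied to $e(B)$ throughout the stage.
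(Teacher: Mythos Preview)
Your proposal is correct and follows essentially the same route as the paper: both arguments bound the total number of failed edge attempts in a stage by $o(\varepsilon n)$ via an annoying/non-annoying split and a separate treatment of the final few repetitions, then conclude by Markov. The paper's version is slightly leaner---it uses only the annoying/typical dichotomy together with the observation that for all but the last $n^{0.7}$ sequences one has $|C_{v_j}|\leq n-n^{0.7}$, which already forces the per-attempt failure probability below $n^{-0.3}$; your extra split into bad singletons and the separate $n^{1/2}$ tail threshold are harmless refinements but not needed.
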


\begin{proof}
We assume that Maker does not forfeit in the first $1.01\cdot \frac{\varepsilon n}{a}$ rounds of Stage 
$\mathcal{S}_i$. We will show that then with probability $1-o(1)$ this stage is already completed successfully.

For the moment, call a sequence in $\mathcal{S}_i$
a \textit{typical} sequence if in (1) the chosen vertex $v$ does not 
belong to an annoying component.
Within the first $1.01\cdot \frac{\varepsilon n}{a}$ rounds in Stage~$\mathcal{S}_i$ there can appear at most $O(n^{0.7})$ non-typical sequences by the following reason:
In each non-typical sequence we enlarge an annoying component by one vertex. If we would have more than 
$n^{0.7}$ non-typical sequences,
then by the definition of annoying components, there would need to be $\Omega(n^{0.7} \cdot n^{0.4}) = \omega(n)$ Breaker edges, in contradiction to the game lasting $O(n)$ rounds.

Now, these non-typical sequences together can lead to only $O(n^{0.9})=o(n)$ edge choices, as for each sequence the number of edge choices according to (2) is bounded by $n^{0.2}$. By the same reason, the last $n^{0.7}$ sequences in $\mathcal{S}_i$ can contribute only $o(n)$ edge choices.

Next, consider the remaining, i.e.~typical, sequences
of Stage~$\mathcal{S}_i$,
and let $X$ be the random variable
counting the typical sequences, except from the last $n^{0.7}$ sequences, which require more than one edge choice. 
We first aim to show that with high probability $X$
does not get too large.
To do so, consider the $j$-th typical sequence in Stage $\mathcal{S}_i$, call it $Seq_j$, and denote with $v_j$ the vertex chosen in step (1), and let $C_{v_j}$ be its component at the moment of choice.
Note that $|C_{v_j}|\leq n - n^{0.7}$, as we ignore the last
$n^{0.7}$ sequences.
Therefore and since $C_{v_j}$ is not annoying, we get
\begin{align*}
&
\Prob\left(\text{Maker takes more than one edge
in step (2) of sequence $Seq_j$}\right)\\
\leq ~
&
\Prob\left(\text{the first edge taken by Maker in (2) of sequence $Seq_j$ belongs to Breaker}\right)\\
\leq ~
&
\frac{|C_{v_j}|\cdot n^{0.4}}{|C_{v_j}|\cdot (n-|C_{v_j}|)}
\leq
\frac{|C_{v_j}|\cdot n^{0.4}}{|C_{v_j}|\cdot n^{0.7}} = n^{-0.3}.
\end{align*}
Therefore, the expected value of $X$ is 
$O(n^{0.7})$, and hence,
by applying Markov's inequality, we get
$\Prob\left(X > n^{0.75}\right) = o(1)$.

From now on we condition on the likely event $X \leq n^{0.75}$. Then we can bound the number of rounds in Stage $\mathcal{S}_i$ as follows:
The non-typical sequences and the last $n^{0.7}$ sequences contribute $o(n)$ edge choices. 
All except $X$ of the remaining sequences require only one edge choice each, leading to a total of at most 
$\varepsilon n$ edge choices.
The remaining $X$ typical sequences require at most $X\cdot n^{0.2}=o(n)$ edge choices in total. Thus, taking a sum over all sequences in $\mathcal{S}_i$, 
we end up with at most 
$(1+o(1))\varepsilon n$ edge choices and hence
at most $1.01\cdot \frac{\varepsilon n}{a}$
rounds. 
\end{proof}

Before coming to our next claims, let us first introduce some useful notation.
We let $\mathcal{S}_0$ be an empty stage
(with no rounds) before the game starts.
For $0\leq i\leq \frac{1}{\varepsilon}$ we define the following events: 
\begin{itemize}
\item $\mathcal{A}_{i,1}$: the event that Maker does not forfeit during Stage 
$\mathcal{S}_{i}$ according to step (2),
\item $\mathcal{A}_{i,2}$: the event that Stage $\mathcal{S}_{i}$ lasts at most $1.01\cdot \frac{\varepsilon n}{a}$ rounds,
\item $\mathcal{A}_{i,3}$: the event that no dangerous component appears during Stage $\mathcal{S}_{i}$, and
\item $\mathcal{A}_{i,4}$: the event that no vertex which was coloured red during Stage~$\mathcal{S}_{i-1}$ stays red until the end of Stage~$\mathcal{S}_{i}$.
\end{itemize}
Moreover, we set
$\mathcal{E}_i = \bigwedge_{j \leq i} (\mathcal{A}_{j,1} \land \mathcal{A}_{j,2} \land \mathcal{A}_{j,3} \land \mathcal{A}_{j,4})$.
Note that 
$\Prob(\mathcal{E}_0)=1$ holds trivially,
and that Maker wins if 
$\mathcal{E}_{1/\varepsilon}$ is satisfied,
as the events $\mathcal{A}_{j,2}$ ensure that the game lasts less than $1.1\cdot \frac{n}{a}$ rounds in total,
and the events $\mathcal{A}_{j,1}$ ensure that Maker never forfeits in any of the $n-1$ sequences of her strategy.
Hence, it remains to show that
$\Prob(\mathcal{E}_{1/\varepsilon})\geq \frac{1}{(10\cdot b/a)^{4\cdot b/a}}$.
In order to do so, we first prove a couple of claims.

\begin{claim}\label{claim:Ai1}
For every $i\leq \frac{1}{\varepsilon}$ we have
$\Prob(\overline{\mathcal{A}_{i,1}}|\mathcal{E}_{i-1})
=o(1)$.
\end{claim}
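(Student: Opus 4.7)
The core idea is to bound, for every sequence played in $\mathcal{S}_i$, its forfeit probability by $p^{n^{0.2}}$, where
$p=e_B(C_v,V\setminus C_v)/[s(n-s)]$ is the Breaker fraction of the cut and $s=|C_v|$. If we can show $p\le 1-\delta$ uniformly for some constant $\delta=\delta(a,b)>0$, then each sequence contributes forfeit probability at most $(1-\delta)^{n^{0.2}}=o(1/n^2)$, and a union bound over the at most $n-1$ sequences in $\mathcal{S}_i$ finishes the claim.

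My first step is to reduce to the first $K:=1.01\varepsilon n/a$ rounds of $\mathcal{S}_i$. By Claim~\ref{clm:connectivity_number.rounds}, the probability that $\mathcal{S}_i$ lasts more than $K$ rounds without Maker forfeiting is $o(1)$ and can be absorbed into the target bound. During these $K$ rounds Breaker claims at most $Kb=1.01n/8$ edges (using the choice $\varepsilon=a/(8b)$), which together with the bound $e(B)\le 1.01(i-1)\varepsilon nb/a$ at the beginning of $\mathcal{S}_i$ (from $\mathcal{E}_{i-1}$) gives $e(B)=O(n)$ throughout the window under analysis.

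Next, setting $C^\ast:=10b/a$, I would split into three cases by $s$. In the middle range $C^\ast\le s\le n-C^\ast$ the cut satisfies $s(n-s)\ge C^\ast n/2$, so $p\le e(B)/[C^\ast n/2]=O(a/b)<1/2$ directly. For $s<C^\ast$, the event $\mathcal{A}_{i-1,3}\subset\mathcal{E}_{i-1}$ guarantees that every time-$0$ sub-component later merged by Maker into $C_v$ is non-dangerous, so $\sum_{u\in C_v}d_B(u)\bigl|_0<(s-0.3)n$ at the start of $\mathcal{S}_i$; the at most $2Kb\le n/4$ additional incidences into $C_v$ during $\mathcal{S}_i$ then leave $\sum_{u\in C_v}d_B(u)\le(s-0.05)n$ at sequence time, giving $p\le (s-0.05)/\bigl[s(1-s/n)\bigr]\le 1-\Omega(1/s)=1-\Omega(a/b)$. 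For $s>n-C^\ast$, the symmetric argument applies to the small complement $D:=V\setminus C_v$, which by the component-merging argument is a disjoint union of time-$0$ non-dangerous components of $M$; bounding $e_B(C_v,D)\le\sum_{u\in D}d_B(u)\le(|D|-0.05)n$ in the same way yields $p\le 1-\Omega(a/b)$.

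The main obstacle is that $\mathcal{E}_{i-1}$ only guarantees the non-dangerous property at the start of $\mathcal{S}_i$, not throughout it. The key point is that the strategy's choice $\varepsilon=a/(8b)$ is exactly tuned so that Breaker's within-stage budget $Kb\le n/8$ is strictly less than the $0.3n$ slack built into the definition of a dangerous component, so the time-$0$ bound degrades only mildly during $\mathcal{S}_i$ and $p$ remains uniformly bounded away from $1$ by a constant $\delta=\Omega(a/b)$.
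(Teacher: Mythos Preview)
Your plan is correct and follows essentially the same approach as the paper's proof: reduce to the first $1.01\varepsilon n/a$ rounds of $\mathcal{S}_i$ via Claim~\ref{clm:connectivity_number.rounds}, then case-split on $s=|C_v|$ into small/middle/large, using for the extreme cases that the time-$0$ sub-components are non-dangerous (from $\bigwedge_{j\le i-1}\mathcal{A}_{j,3}\subset\mathcal{E}_{i-1}$) together with the within-stage Breaker budget $Kb<0.3n$, and a direct cut-size comparison for the middle range. The paper uses the cutoff $5b/a$ in place of your $10b/a$ and bounds the degree-sum increase by $Kb+\binom{|C_v|}{2}$ rather than $2Kb$, but these differences are inessential.
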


\begin{proof}
Assume that $\mathcal{E}_{i-1}$ holds.
By Claim~\ref{clm:connectivity_number.rounds}
we know that
with probability $1-o(1)$,
if Maker forfeits in Stage~$\mathcal{S}_{i}$,
then this must happen during the first 
$1.01\cdot \frac{\varepsilon n}{a}$ rounds of this stage. 

Now, let $Seq_j$ be any fixed sequence within the first 
$1.01\cdot \frac{\varepsilon n}{a}$ rounds
of Stage~$\mathcal{S}_{i}$. 
In order to succeed with a union bound over all
sequences in this stage, it is enough to show
that the probability of forfeiting during $Seq_j$ because of step (2) is upper bounded by $\exp(-n^{-0.1})$. Assume that $v$ is the vertex chosen in step (1) of sequence $Seq_j$, and let $C_v$ be its component. 

We first consider the case that $|C_v|\leq \frac{5b}{a}$. Then
let $C_1,\ldots,C_{\ell}$ be the components of $M[C_v]$
at the end of Stage $\mathcal{S}_{i-1}$.
Under assumption of $\mathcal{E}_{i-1}$
we have that $\bigwedge_{j\leq i-1} \mathcal{A}_{j,3}$
holds. Therefore, at the end of Stage $\mathcal{S}_{i-1}$ no such component
is dangerous, giving
$$
\sum_{u\in C_v} d_B(u)
=
\sum_{k=1}^{\ell} \sum_{u\in C_k} d_B(u)
<
\sum_{k=1}^{\ell} (|C_k|n - 0.3n)
\leq
|C_v|n - 0.3n.
$$ 
Since we now consider only the first $1.01\cdot \frac{\varepsilon n}{a}$ rounds of $\mathcal{S}_{i}$,
in the meantime Breaker can play at most
$1.01\cdot \frac{\varepsilon nb}{a}$ further edges, and hence increase the above sum of degrees by at most
$1.01\cdot \frac{\varepsilon nb}{a} + \binom{|C_v|}{2}<0.15n$, leading to
$
\sum_{u\in C_v} d_B(u)
<
|C_v|n - 0.15n
$
throughout sequence $Seq_j$.
Hence, more than $0.1n$ edges in $E(C_v,V\setminus C_v)$ are free throughout this sequence, and it follows
that the probability of forfeiting in step (2) is bounded from above by
\begin{align*}
\left( 1 - \frac{0.1n}{|C_v|\cdot (n-|C_v|)} \right)^{n^{0.2}}
& \leq 
\left( 1 - \frac{0.1n}{5bn/a} \right)^{n^{0.2}} 
=
\left( 1 - \frac{a}{50b} \right)^{n^{0.2}} 
<
\exp\left( - n^{0.1}\right)
\end{align*}
for large enough $n$, since $a$ and $b$ are constants.

If instead $|C_v|\geq n - \frac{5b}{a}$,
then we can calculate similarly,
as each component in $V\setminus C_v$ was not dangerous
at the end of Stage $\mathcal{S}_i$,
again leading to more than $0.1n$ free edges in $E(C_v,V\setminus C_v)$.

Hence, it remains to consider the case 
$\frac{5b}{a} < |C_v| < n-\frac{5b}{a}$.
Then $e(C_v,V\setminus C_v) > \frac{4bn}{a}$
for large enough $n$,
while we also have that 
$e_B(C_v,V\setminus C_v) < \frac{2bn}{a}$ during the considered sequence, since the game lasts at most $1.1\cdot \frac{n}{a}$ rounds in total. Hence, in this case
the probability of forfeiting in step (2) is bounded from above by
$0.5^{n^{0.2}} <
\exp\left( - n^{0.1}\right)$
for large enough $n$.
\end{proof}

\begin{claim}
For every $i\leq \frac{1}{\varepsilon}$ we have
$\Prob(\overline{\mathcal{A}_{i,2}}|\mathcal{E}_{i-1},\mathcal{A}_{i,1}) = o(1)$.
\end{claim}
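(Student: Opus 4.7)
The plan is to rerun the argument behind Claim~\ref{clm:connectivity_number.rounds} inside the conditional probability space determined by $\mathcal{E}_{i-1}\land\mathcal{A}_{i,1}$. Conditioning on $\mathcal{A}_{i,1}$ already guarantees that Maker does not forfeit via step~(2) during Stage~$\mathcal{S}_i$, while conditioning on $\mathcal{E}_{i-1}$ gives a good starting state: the events $\mathcal{A}_{j,2}$, $j\le i-1$, imply that the previous stages used at most $(i-1)\cdot 1.01\cdot\frac{\varepsilon n}{a}$ rounds in total, so Breaker has so far claimed only $O(n)$ edges. Consequently, the resource bounds used throughout the proof of Claim~\ref{clm:connectivity_number.rounds} remain valid in the conditional setting, and it is enough to revisit that argument and verify that every probabilistic estimate carries over.

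Concretely, I would again split the sequences Maker executes during the first $1.01\cdot\frac{\varepsilon n}{a}$ rounds of Stage~$\mathcal{S}_i$ into three groups: the non-typical sequences, in which the chosen vertex $v$ lies in an annoying component; the last $n^{0.7}$ sequences of the stage; and all remaining (typical) sequences. Exactly as in Claim~\ref{clm:connectivity_number.rounds}, only $O(n^{0.7})$ non-typical sequences can occur in this window, since each of them enlarges an annoying component by one vertex and too many would force $\omega(n)$ Breaker edges, contradicting the round bound provided by $\mathcal{E}_{i-1}$. Together with the final $n^{0.7}$ sequences, these contribute only $o(n)$ edge choices in total, because each sequence uses at most $n^{0.2}$ edge choices. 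For a typical (non-final) sequence with chosen vertex $v$ and component $C_v$ we have $|C_v|\le n-n^{0.7}$ and $C_v$ is not annoying, so the first edge sampled in step~(2) is blocked with probability at most $n^{-0.3}$. Hence the conditional expectation of the number $X$ of typical sequences needing more than one edge choice is $O(n^{0.7})$, and Markov's inequality applied in the conditional space yields $X\le n^{0.75}$ with conditional probability $1-o(1)$. Summing all edge choices and dividing by $a$ then shows that Stage~$\mathcal{S}_i$ concludes within $(1+o(1))\cdot\frac{\varepsilon n}{a}\le 1.01\cdot\frac{\varepsilon n}{a}$ rounds with conditional probability $1-o(1)$, which is precisely what is claimed.

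The only step that needs care is verifying that the per-sequence bound of $n^{-0.3}$, and the related deterministic estimates on component sizes and Breaker's edge count, remain valid after conditioning on $\mathcal{E}_{i-1}\land\mathcal{A}_{i,1}$. The event $\mathcal{E}_{i-1}$ is determined entirely by events prior to Stage~$\mathcal{S}_i$, so it just fixes an allowed history; and $\mathcal{A}_{i,1}$ only restricts the outcomes of step~(2) within Stage~$\mathcal{S}_i$, without affecting the uniform random vertex choice in step~(1) or the distribution of the first edge sampled in step~(2) given the current game state. Since each per-sequence probability bound is computed from the current deterministic state of the board (Breaker's edges and the component structure), the bound carries over verbatim to the conditional setting, and Markov's inequality applies inside the conditional space. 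Putting these pieces together yields $\Prob(\overline{\mathcal{A}_{i,2}}\mid\mathcal{E}_{i-1},\mathcal{A}_{i,1})=o(1)$, as required.
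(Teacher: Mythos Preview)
Your route is much longer than the paper's, which dispatches the claim in two lines via the identity
\[
\Prob(\overline{\mathcal{A}_{i,2}}\mid\mathcal{E}_{i-1},\mathcal{A}_{i,1})
=\frac{\Prob(\overline{\mathcal{A}_{i,2}}\land\mathcal{A}_{i,1}\mid\mathcal{E}_{i-1})}{1-\Prob(\overline{\mathcal{A}_{i,1}}\mid\mathcal{E}_{i-1})}
=\frac{o(1)}{1-o(1)}=o(1),
\]
the numerator being exactly the event of Claim~\ref{clm:connectivity_number.rounds} and the denominator coming from Claim~\ref{claim:Ai1}. No re-examination of the internals of Stage~$\mathcal{S}_i$ is needed.

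Your re-derivation is viable in spirit, but one step is not justified as written. You assert that conditioning on $\mathcal{A}_{i,1}$ ``does not affect \ldots\ the distribution of the first edge sampled in step~(2) given the current game state''. That is false: $\mathcal{A}_{i,1}$ is a \emph{future} event within Stage~$\mathcal{S}_i$ (it says that in every sequence Maker finds a free edge within $n^{0.2}$ attempts), and it is correlated with whether the first sampled edge happens to be free. Hence the per-sequence bound of $n^{-0.3}$ does not transfer ``verbatim'' to the conditional expectation $\mathbb{E}[X\mid\mathcal{E}_{i-1},\mathcal{A}_{i,1}]$ by your independence argument. The clean fix is precisely the paper's: since $\Prob(\mathcal{A}_{i,1}\mid\mathcal{E}_{i-1})=1-o(1)$ by Claim~\ref{claim:Ai1}, any event of probability $p$ given $\mathcal{E}_{i-1}$ alone has probability at most $(1+o(1))p$ after additionally conditioning on $\mathcal{A}_{i,1}$. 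With that correction your argument goes through, but you have then effectively reduced to the paper's shortcut.
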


\begin{proof}
Using Claim~\ref{clm:connectivity_number.rounds}
and Claim~\ref{claim:Ai1}, we immediately see that
\begin{align*}
\Prob(\overline{\mathcal{A}_{i,2}}|\mathcal{E}_{i-1},\mathcal{A}_{i,1})
=
\frac{\Prob(\overline{\mathcal{A}_{i,2}} \land \mathcal{A}_{i,1}|\mathcal{E}_{i-1})}{
1- \Prob(\overline{\mathcal{A}_{i,1}}|\mathcal{E}_{i-1}) }
=
\frac{o(1)}{1-o(1)} 
=
o(1)
\end{align*}
which proves the claim.
\end{proof}

\begin{claim}
For every $i\leq \frac{1}{\varepsilon}$ we have
$\Prob(\overline{\mathcal{A}_{i,3}}|\mathcal{E}_{i-1},\mathcal{A}_{i,1},\mathcal{A}_{i,2})=0$.
\end{claim}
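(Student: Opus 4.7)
The plan is a deterministic counting argument combining a merging slack observation with the Breaker edge budget from $\mathcal{A}_{i,2}$. Fix an arbitrary moment $t$ during Stage $\mathcal{S}_i$ and a component $C$ of Maker's graph at that moment; let $t_0$ denote the end of Stage $\mathcal{S}_{i-1}$. Since Maker never removes edges, the vertex set $V(C)$ decomposes as $V(C)=\bigsqcup_{j=1}^{\ell}V(C_j)$, where $C_1,\dots,C_\ell$ are the distinct components of Maker's graph at time $t_0$ whose vertex sets are contained in $V(C)$. Because $\mathcal{E}_{i-1}$ is assumed, each $C_j$ is non-dangerous at $t_0$, so $\sum_{v\in C_j}d_B(v)|_{t_0}<(|C_j|-0.3)n$, and summing over $j$ yields
\[
\sum_{v\in C}d_B(v)\Big|_{t_0} \;<\; (|C|-0.3\ell)n.
\]
This encodes the key merging observation: every time two non-dangerous components are merged, the resulting component gains an extra $0.3n$ of slack relative to the dangerous threshold.

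Next I invoke $\mathcal{A}_{i,2}$: Stage $\mathcal{S}_i$ lasts at most $1.01\varepsilon n/a$ rounds, so Breaker claims at most $1.01\varepsilon bn/a = 1.01n/8$ edges during it, contributing at most $2\cdot 1.01n/8 = 0.2525n$ to $\sum_{v\in C}d_B(v)$. Hence
\[
\sum_{v\in C}d_B(v)\Big|_{t} \;<\; (|C|-0.3\ell)n + 0.2525n.
\]
For $C$ to be dangerous at time $t$ we would need this to be at least $(|C|-0.3)n$, i.e.\ $0.3(\ell-1)<0.2525$, which forces $\ell\leq 1$. Whenever $\ell\geq 2$, the above inequality directly yields $\sum_{v\in C}d_B(v)|_t<(|C|-0.3475)n<(|C|-0.3)n$, so no component produced or enlarged by a merge during $\mathcal{S}_i$ can be dangerous.

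The remaining case $\ell=1$ means $V(C)=V(C_1)$ throughout $\mathcal{S}_i$, so $C$ does not arise as a new component during the stage. Under the natural reading of $\mathcal{A}_{i,3}$ as saying that no dangerous component comes into existence within $\mathcal{S}_i$, only components newly formed by merges (the $\ell\geq 2$ case) can witness $\overline{\mathcal{A}_{i,3}}$, and those have just been ruled out. I anticipate the main obstacle to be exactly this interpretive point: if one demands instead that no component is dangerous at any time during $\mathcal{S}_i$, the $\ell=1$ case calls for an inductive bookkeeping across earlier stages, using $\mathcal{A}_{j,2}$ for $j<i$ to show that an unchanged component retains enough of its original merging slack of $0.3n$ to absorb the $0.2525n$ per-stage erosion by Breaker.
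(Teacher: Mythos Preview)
Your decomposition into the components $C_1,\ldots,C_\ell$ at the end of $\mathcal{S}_{i-1}$ and the slack-counting for $\ell\geq 2$ is correct and clean. However, the $\ell=1$ case is a genuine gap, not merely an interpretive issue. The event $\mathcal{A}_{i,3}$ is used elsewhere in the paper (in the proof that $\Prob(\overline{\mathcal{A}_{i,1}}\mid\mathcal{E}_{i-1})=o(1)$) precisely to conclude that no component is dangerous at the end of $\mathcal{S}_{i-1}$; so ``appears'' must be read as ``is dangerous at some moment during $\mathcal{S}_i$'', and a component whose vertex set stays fixed but whose Breaker degree sum crosses the threshold $(|C|-0.3)n$ does witness $\overline{\mathcal{A}_{i,3}}$.

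Your suggested fix---tracking merging slack inductively using only the per-stage edge budget from $\mathcal{A}_{j,2}$---cannot succeed. Consider a single vertex $v$ that Maker happens never to pick in step~(1) and that no step~(2) edge touches. Breaker may devote all of his at most $0.12625n$ edges per stage to $v$; after six stages $d_B(v)>0.7n$, so $\{v\}$ is dangerous while $\ell=1$ throughout. Nothing in $\mathcal{A}_{j,1},\mathcal{A}_{j,2}$ excludes this; and while $\mathcal{A}_{j,3}$ for $j\leq i-1$ caps $d_B(v)$ below $0.7n$ at time $t_0$, the extra $0.12625n$ gained during $\mathcal{S}_i$ can still push it over.

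The missing ingredient is $\mathcal{A}_{j,4}$, which your argument never invokes. The paper's proof proceeds uniformly in $\ell$: if $C$ is dangerous at time $t$ in $\mathcal{S}_i$, then going back two stages (losing at most $0.3n$ from the degree sum) gives $\sum_{v\in C} d_B(v)\geq (|C|-0.6)n$ already in $\mathcal{S}_{i-2}$, which forces \emph{every} $v\in C$ to satisfy $d_B(v)\geq n/3$ there, hence to be red by the end of $\mathcal{S}_{i-2}$. The events $\mathcal{A}_{j,4}$ for $j\leq i-1$ then guarantee each such $v$ was picked in step~(1) by the end of $\mathcal{S}_{i-1}$, and each such pick contributes a distinct Maker edge inside $C$. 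This yields $|C|$ edges in the tree $C$, a contradiction.
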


\begin{proof}
Assume that all of 
$\mathcal{E}_{i-1},\mathcal{A}_{i,1},\mathcal{A}_{i,2}$ hold. We aim to show that this implies $\mathcal{A}_{i,3}$.
For contradiction,
assume that a dangerous component appears during 
$\mathcal{S}_{i}$, call it $C$ and denote its size by $c:=|C|$. Recall that $c\leq \frac{5b}{a}=O(1)$.
At the moment of becoming dangerous, we have
$\sum_{v\in C} d_B(v) \geq (|C|-0.3) n$.
Since every stage so far lasted at most $1.01\cdot \frac{\varepsilon n}{a}$ rounds,
by conditioning on $\mathcal{E}_{i-1}$ and $\mathcal{A}_{i,2}$, Breaker so far claimed less than 
$0.3n$ edges in every two consecutive stages, and
so at some point in Stage $\mathcal{S}_{i-2}$ we must have had
$\sum_{v\in C} d_B(v) \geq (|C|-0.6) n$.
But then every vertex $v\in C$ must 
satisfy $d_B(v)\geq \frac{n}{3}$ 
already in Stage $\mathcal{S}_{i-2}$,
and hence was coloured red
in or before $\mathcal{S}_{i-2}$.
However, by assuming $\mathcal{E}_{i-1}$,
we have that $\bigwedge_{j\leq i-1} \mathcal{A}_{j,4}$
holds, and hence each such vertex $v$ was coloured blue until the end of Stage $\mathcal{S}_{i-1}$. But this means that until the end of Stage $\mathcal{S}_{i-1}$ the vertex $v$ was chosen in some step (1),
and since Maker did not forfeit until the end of Stage
$\mathcal{S}_{i-1}$,
at that point then the component $C_v\subset C$ containing $v$ was extended by an edge.
Since this must have happened for each of the $c$ vertices in $C$, always when they were coloured from red to blue,
we can conclude that $C$ must have obtained at least $c$ edges this way, in contradiction to $C$ having $c$ vertices and no cycles.
\end{proof}

\begin{claim}
For every $i\leq \frac{1}{\varepsilon}$ we have
$\Prob(\overline{\mathcal{A}_{i,4}}|\mathcal{E}_{i-1},\mathcal{A}_{i,1},\mathcal{A}_{i,2},\mathcal{A}_{i,3})\leq 1- (1-o(1))\varepsilon^{t_{i-1}}$,
where $t_{i-1}$ is the number of vertices that turned red during Stage $\mathcal{S}_{i-1}$.
\end{claim}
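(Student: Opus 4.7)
The plan is to reduce the claim to a single hypergeometric computation almost identical to the one carried out in the proof of Claim~\ref{c:prob-bj-min-k}. First, I will observe that under the assumed conditioning the event $\mathcal{A}_{i,4}$ has a transparent combinatorial meaning. Indeed, since a red vertex is recoloured to blue only at the moment it is picked in step~(1) of some iteration, and since $\mathcal{A}_{i,1}\wedge\mathcal{A}_{i,2}$ forces Maker to complete all $\varepsilon(n-1)$ iterations of Stage $\mathcal{S}_i$ without forfeiting, the event $\mathcal{A}_{i,4}$ amounts to demanding that the (random) set $S$ of vertices chosen in step~(1) during $\mathcal{S}_i$ contains the set $R$ of vertices that were coloured red during $\mathcal{S}_{i-1}$ and are still red at the beginning of $\mathcal{S}_i$. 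By definition $|R|\leq t_{i-1}$, and recall also that $t_{i-1}$ is bounded by an absolute constant (namely, $t_{i-1}<4b/a$).

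Next, the idea is to couple Maker's step-(1) choices in $\mathcal{S}_i$ with a uniformly random permutation $\pi=(\pi_1,\ldots,\pi_n)$ of $V(K_n)$ drawn independently of PhantomBreaker's strategy and of Maker's step-(2) randomness, so that on the event $\mathcal{A}_{i,1}\wedge\mathcal{A}_{i,2}$ one simply has $S=\{\pi_1,\ldots,\pi_{\varepsilon(n-1)}\}$. Since conditional on $\mathcal{E}_{i-1}$ the set $R$ is determined but $\pi$ remains uniformly distributed, the same computation as in the proof of Claim~\ref{c:prob-bj-min-k} gives
\begin{align*}
\Prob\bigl(R\subseteq\{\pi_1,\ldots,\pi_{\varepsilon(n-1)}\}\bigm|\mathcal{E}_{i-1}\bigr)
=\prod_{j=0}^{|R|-1}\frac{\varepsilon(n-1)-j}{n-j}
\geq (1-o(1))\varepsilon^{|R|}\geq (1-o(1))\varepsilon^{t_{i-1}},
\end{align*}
where the final inequality uses $\varepsilon<1$ and $|R|\leq t_{i-1}$.

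Finally, one needs to put the pieces together via Bayes. Since the preceding claim yields $\Prob(\overline{\mathcal{A}_{i,3}}\mid\mathcal{E}_{i-1},\mathcal{A}_{i,1},\mathcal{A}_{i,2})=0$, the factor $\mathcal{A}_{i,3}$ can be dropped from the conditioning for free. Writing $\mathcal{C}=\mathcal{A}_{i,1}\wedge\mathcal{A}_{i,2}$ and using that $\Prob(\mathcal{C}\mid\mathcal{E}_{i-1})\geq 1-o(1)$ by the previous two claims, the equivalence $\mathcal{A}_{i,4}\wedge\mathcal{C}=\{R\subseteq\{\pi_1,\ldots,\pi_{\varepsilon(n-1)}\}\}\wedge\mathcal{C}$ together with a union bound gives
\begin{align*}
\Prob(\mathcal{A}_{i,4}\mid\mathcal{E}_{i-1},\mathcal{C})
=\frac{\Prob(\mathcal{A}_{i,4}\wedge\mathcal{C}\mid\mathcal{E}_{i-1})}{\Prob(\mathcal{C}\mid\mathcal{E}_{i-1})}
\geq \frac{(1-o(1))\varepsilon^{t_{i-1}}-o(1)}{1-o(1)}
=(1-o(1))\varepsilon^{t_{i-1}},
\end{align*}
from which the claim follows by taking complements. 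The only mildly delicate point in the argument is this final absorption step: a priori, the conditioning on $\mathcal{C}$ could bias $\pi$, but since $\mathcal{C}$ has probability $1-o(1)$ conditional on $\mathcal{E}_{i-1}$ and since the target lower bound $\varepsilon^{t_{i-1}}=\Omega(1)$ is a positive constant (as both $\varepsilon$ and $t_{i-1}$ are bounded in terms of $a,b$), this potential bias is swallowed by the $(1-o(1))$ factor.
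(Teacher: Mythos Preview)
Your proof is correct and follows essentially the same route as the paper's: both arguments reduce $\mathcal{A}_{i,4}$ to the event that the uniformly random set of step-(1) choices in $\mathcal{S}_i$ covers the constant-size set $R$, and both carry out the same hypergeometric estimate as in Claim~\ref{c:prob-bj-min-k}. Your treatment is in fact slightly more careful than the paper's, since you explicitly handle via a union bound the potential bias introduced by conditioning on $\mathcal{A}_{i,1}\wedge\mathcal{A}_{i,2}$, whereas the paper tacitly asserts that the step-(1) choices remain uniform under this conditioning.
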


\begin{proof}
Assume that $\mathcal{E}_{i-1},\mathcal{A}_{i,1},\mathcal{A}_{i,2},\mathcal{A}_{i,3}$ hold.
Let $R$ be the set of the $t_{i-1}$ vertices that turned red in Stage $\mathcal{S}_{i-1}$,
and note that $t_{i-1}=O(1)$, because of our general bound on the number of bad vertices. All the vertices chosen according to (1) in Stage $\mathcal{S}_{i}$ form a set
of size $\varepsilon(n-1)$ chosen
uniformly at random from all sets of this size.
By the definition of the vertex colouring we get 
\begin{align*}
& \Prob\left( \text{all vertices of $R$ are blue at the end of Stage $\mathcal{S}_{i}$} \right) \\
\geq ~
& \Prob\left( \text{all vertices of $R$ are chosen in step (1) of a sequence of $\mathcal{S}_{i}$} \right) \\
= ~
& \frac{\binom{n-t_{i-1}}{\varepsilon(n-1) - t_{i-1}}}{\binom{n}{\varepsilon(n-1)}}
=
\prod_{i=1}^{t_{i-1}} 
\frac{\varepsilon(n-1) - i + 1}{n - i+ 1}
=
\varepsilon^{t_{i-1}} (1-o(1))
\end{align*}
and hence the claim follows.
\end{proof}

Finally, putting all the above claims together 
we conclude that for every $i\leq \frac{1}{\varepsilon}$,
\begin{align*}
\Prob\left(\overline{\mathcal{E}_{i}}|\mathcal{E}_{i-1}\right)
& =
\Prob\left(\overline{\mathcal{A}_{i,1}}\lor \overline{\mathcal{A}_{i,2}}\lor \overline{\mathcal{A}_{i,3}}\lor \overline{\mathcal{A}_{i,4}}|\mathcal{E}_{i-1}\right) \\
& =
\sum_{j\in [4]}
\Prob\left(\overline{\mathcal{A}_{i,j}} \land \left(\bigwedge_{\ell < j} \mathcal{A}_{i,\ell} \right) \Big| \mathcal{E}_{i-1}\right) 
\leq 
\sum_{j\in [4]}
\Prob\left(\overline{\mathcal{A}_{i,j}} \Big| \mathcal{E}_{i-1} \land \left(\bigwedge_{\ell < j} \mathcal{A}_{i,\ell} \right) \right) \\
& \leq
1 + o(1) - (1-o(1))\varepsilon^{t_{i-1}} 
\end{align*}
and therefore
$\Prob\left(\mathcal{E}_{i}|\mathcal{E}_{i-1}\right)
\geq 
(0.99\varepsilon)^{t_{i-1}}$.
In particular, by recalling that Breaker cannot create more than $\frac{4b}{a}$ bad vertices,
which implies $\sum_j t_j \leq \frac{4b}{a}$, and by using that
$\mathcal{E}_{\frac{1}{\varepsilon}}\subset \mathcal{E}_{\frac{1}{\varepsilon}-1} \subset \ldots \subset \mathcal{E}_{0}$ and $\Prob(\mathcal{E}_0)=1$, we get
\begin{align*}
\Prob\left(\text{Maker wins}\right)
\geq 
\Prob\left(\mathcal{E}_{\frac{1}{\varepsilon}}\right)
=
\prod_{i=1}^{1/\varepsilon} \Prob\left(\mathcal{E}_{i}|\mathcal{E}_{i-1}\right) 
\geq
\prod_{i=1}^{1/\varepsilon}
(0.99\varepsilon)^{t_{i-1}}
>
\left(\frac{1}{10 b/a}\right)^{4b/a}
\end{align*}
which completes the proof of the theorem.
\end{proof}

\bigskip

%%%%%%%%%%%%%%%%%%%%%%%%%%%%%%%%%%%%%%%%%%%%%%%%%%%%%
%%
%%   Connectivity Game (smaller b)
%%
%%%%%%%%%%%%%%%%%%%%%%%%%%%%%%%%%%%%%%%%%%%%%%%%%%%%%

\subsection{Connectivity game with $b\leq 2a$}

\begin{proof}[Proof of Theorem~\ref{thm:maker_connectivity}~\ref{conn_small_b}]
Maker plays according to the following strategy. 

\smallskip

\textbf{Stage I:} Maker claims a perfect matching
according to the strategy given in the proof of
Theorem~\ref{thm:maker_matching}~\ref{match_small_b}.
If this stage lasts more than $\frac{n}{2a}+n^{0.9}$ rounds, then she forfeits the game.

\smallskip

\textbf{Stage II:} As long as there are components of size 2 in Maker's graph, she does the following iteratively.
\begin{itemize}
\item[(i)] 
If there are less than $n^{2/3}$ components of size 2,
she proceeds to step (ii). Otherwise she picks two components $C_1,C_2$ of size 2 u.a.r., and then chooses an edge $f\in E(C_1,C_2)$
u.a.r. 
If this edge $f$ is free, Maker claims it
and repeats the iteration for her next edge. 
If the edge $f$ is not free, this choice is called a \emph{failure}.
Then for each component $C\in \{C_1,C_2\}$ Maker performs the step (iii) before starting the iteration again.
\item[(ii)] 
There are less than $n^{2/3}$ components of size 2. 
Maker picks one component $C$ of size 2 u.a.r., and then chooses an edge $f\in E(C,V\setminus C)$ u.a.r. 
If this edge $f$ is free, she claims it
and repeats the iteration for her next edge. 
If the edge $f$ is not free, this choice is a failure.
Then for the component $C$ she does the step (iii)
before starting the iteration again.
\item[(iii)] Let $C$ be a component as described in (i) or (ii) where a failure happened. Then Maker chooses the edges from $E(C,V\setminus C)$ u.a.r.~until having the first success. When a success occurs, she claims the corresponding edge. 
\end{itemize}

Maker forfeits the game if this stage lasts more than $1.1\cdot \frac{n}{4a}$ rounds.

\smallskip

\textbf{Stage III:} As long as Maker's graph is not connected, she does the following iteratively.

She picks a component $C$ u.a.r. and then chooses the edges from $E(C,V\setminus C)$ 
u.a.r. until she has a success. When a success happens,  she claims the corresponding edge. 

Maker forfeits the game, if this stage lasts more than $1.1\cdot \frac{n}{4a}$ rounds before her graph is connected.

\bigskip

\noindent
\textbf{Strategy discussion:}
Maker a.a.s. does not forfeit in Stage I
due to Remark~\ref{remark:pm}. Hence, we
can start with analysing Stage II.
We start with the following claim.

\begin{claim}\label{clm:Connectivity:failuresII}
The following holds a.a.s.:
each time when Maker executes (iii) for some component $C$,
she needs to choose at most $\log^2 n$ edges until
a success happens.
\end{claim}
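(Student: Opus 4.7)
The plan is to exploit the fact that Breaker's total number of edges throughout Stages I and II is bounded linearly in $n$, with a constant significantly smaller than $2$. The first step is to bound the total number of rounds played so far: by Remark~\ref{remark:pm} (or the forfeiture condition in Stage I), Stage I takes at most $\frac{n}{2a}+n^{0.9}$ rounds, and by the forfeiture condition of Stage II, the latter takes at most $1.1 \cdot \frac{n}{4a}$ additional rounds. Using $b \le 2a$, Breaker plays at most $b\cdot\big(\frac{n}{2a}+n^{0.9}+1.1\cdot\frac{n}{4a}\big) \le (1.55+o(1))n$ edges during Stages I and II combined.

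The second step is to turn this into a bound on the free-edge density around any size-2 component $C = \{u,v\}$ on which Maker runs step (iii). Since $uv$ is Maker's edge, every Breaker edge incident to $\{u,v\}$ is counted exactly once in $d_B(u)+d_B(v)$, which gives $d_B(u)+d_B(v) \le e(B) \le (1.55+o(1))n$. Since $C$ has no Maker edges leaving it, the number of free edges in $E(C,V\setminus C)$ is at least $2(n-2) - (1.55+o(1))n \ge (0.45-o(1))n$. In particular, each uniformly random edge choice in (iii) is a failure with probability at most $0.8$ for large enough $n$.

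The third step is a geometric tail estimate combined with a union bound. The probability that $\log^2 n$ consecutive choices in (iii) are all failures is at most $0.8^{\log^2 n} = n^{-\Omega(\log n)}$, which is $o(n^{-2})$. A union bound over the at most $O(n)$ executions of (iii) (bounded by the total number of rounds in Stage II) completes the proof that a.a.s.~no execution of (iii) requires more than $\log^2 n$ edge choices. No major obstacle is anticipated: everything hinges on the degree bound $d_B(u)+d_B(v) \le e(B) < 2n - \Omega(n)$, which is what keeps the per-choice failure probability bounded away from $1$ by a positive constant, making the geometric tail argument routine.
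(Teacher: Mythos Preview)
Your proposal is correct and follows essentially the same approach as the paper: bound $e(B)$ throughout Stages I and II by roughly $1.6n$ using the forfeiture conditions and $b\le 2a$, deduce that each random edge choice in (iii) fails with probability bounded away from $1$ by a constant, and then take a union bound over $O(n)$ executions. The only minor difference is that the paper invokes Lemma~\ref{lemma:Chernoff_modified}(a) with $p=0.1$ for the tail bound, whereas you use the simpler direct estimate $0.8^{\log^2 n}=n^{-\Omega(\log n)}$; both yield the required $o(n^{-2})$ bound.
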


\begin{proof}
By the bound on the number of rounds of Stages I and II,
Breaker claims at most 
$b\cdot \left(\frac{n}{2a} + n^{0.9} + 1.1\cdot \frac{n}{4a} \right) < 1.6 n$ 
edges until the end of Stage II. Since $e(C,V\setminus C)\geq 2(n-2)$ for every component $C$ picked in Stage II, it follows that each time Maker chooses an edge in (iii)
the probability of a failure is bounded from above by
$\frac{1.6n}{2(n-2)} < 0.9$, and hence the probability of success
is at least $0.1$. The claim now follows by applying Lemma~\ref{lemma:Chernoff_modified}(a) with $p=0.1$,
and a union bound over all $O(n)$ iterations executed in Stage II.
\end{proof}

\begin{claim}
The following holds a.a.s.: Maker does not forfeit during Stage II and this stage lasts 
$\frac{n}{4a} + O(n^{2/3}\log^2 n)$
rounds.
\end{claim}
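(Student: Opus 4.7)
The plan is to bound Maker's total number of edge choices during Stage II and show that a.a.s.~this number is at most $\frac{n}{4}+O(n^{2/3}\log^2 n)$; dividing by $a$ then gives the claimed round count and places it comfortably below the forfeit threshold $1.1\cdot \frac{n}{4a}$ for large $n$.

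First I would observe that in each of (i), (ii), (iii) the edge Maker claims connects two distinct components, so Maker's graph stays a forest throughout Stage II. Consequently every Maker edge reduces the number of components by exactly one. Every iteration of (i) --- whether a success or a failure --- removes at least two size-$2$ components (in the failure case, the two (iii)-calls together extract both $C_1$ and $C_2$ from the family of size-$2$ components), and every iteration of (ii) removes at least one size-$2$ component. Starting from the $n/2$ size-$2$ components inherited from Stage I and stopping (i) as soon as fewer than $n^{2/3}$ remain, this yields at most $n/4$ iterations of (i) in total and fewer than $n^{2/3}$ iterations of (ii).

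The key estimate is the probability that a single iteration of (i) is a failure. Using that $e(B)<1.6n$ throughout Stage II (as computed just before Claim~\ref{clm:Connectivity:failuresII}), together with the fact that (i) is executed only while there are $s\geq n^{2/3}$ size-$2$ components, I would compute
$$ \Pr[\text{failure in (i)}] \;\leq\; \frac{1}{4\binom{s}{2}}\sum_{C_1\neq C_2} e_B(C_1,C_2) \;\leq\; \frac{e(B)}{4\binom{s}{2}} \;=\; O(n^{-1/3}), $$
where the sum is over unordered pairs of distinct size-$2$ components. Feeding this uniform upper bound $p=O(n^{-1/3})$ into Lemma~\ref{lemma:Chernoff_modified}(b), applied to the (at most $n/4$) Bernoulli variables indicating failures in successive (i)-iterations, yields that a.a.s.~at most $O(n^{2/3})$ such failures occur.

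Finally, by Claim~\ref{clm:Connectivity:failuresII}, each (iii)-call costs at most $\log^2 n$ edge choices. A success in (i) or (ii) costs one choice; a failure in (i) costs $1+2\log^2 n$ (one for $f$, plus one (iii)-call per component); a failure in (ii) costs $1+\log^2 n$. Summing with the bounds above yields at most $\frac{n}{4}+O(n^{2/3}\log^2 n)$ total edge choices and hence at most $\frac{n}{4a}+O(n^{2/3}\log^2 n)$ rounds, as claimed. The hard part is the concentration bound on the number of failures in (i); once that is in hand, the rest is routine bookkeeping.
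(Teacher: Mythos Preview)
Your proposal is correct and follows essentially the same approach as the paper: bound the failure probability in (i) by $O(n^{-1/3})$ using $e(B)=O(n)$ and $\Omega(n^{4/3})$ choices, apply Lemma~\ref{lemma:Chernoff_modified}(b) to get $O(n^{2/3})$ failures a.a.s., trivially bound the number of (ii)-iterations by $n^{2/3}$, and then invoke Claim~\ref{clm:Connectivity:failuresII} for the $O(\log^2 n)$ cost of each (iii)-call. Your added bookkeeping (the explicit $n/4$ bound on (i)-iterations and the exact failure-probability formula) makes the argument a bit more transparent but does not change the method.
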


\begin{proof}
We condition on Claim~\ref{clm:Connectivity:failuresII}
and first aim to bound the number of iterations
in Stage II where a failure happens in (i) or (ii).
If Maker plays according to (i),
then Maker has $\Omega(n^{4/3})$ options to choose $C_1,C_2$.
Since $e(B)=O(n)$ it follows that in every execution of (i) a failure can only happen with probability $O(n^{-1/3})$.
Hence, applying Lemma~\ref{lemma:Chernoff_modified}(b), it follows that
a.a.s.~only $O(n^{2/3})$ executions of (i) will have a failure.
Moreover, as (ii) cannot be executed more than $n^{2/3}$
times, the number of failures caused by (ii) is also bounded by $O(n^{2/3})$. 

Now, by Claim~\ref{clm:Connectivity:failuresII}
we know that, whenever a failure happens in (i) or (ii), Maker needs only $O(\log^2 n)$ additional edge choices in (iii). Therefore, a.a.s.~the total number of
failures is $O(n^{2/3}\log^2 n)$. Hence, Stage II a.a.s.~lasts $\frac{n}{4a} + O(n^{2/3}\log^2 n)$
rounds and Maker does not forfeit. 
\end{proof}

\smallskip

Now, conditioning on the above claims, it remains to show that Maker a.a.s. does not forfeit during Stage III, finally leading to a spanning connected graph. 
We start with the following variant of Claim~\ref{clm:Connectivity:failuresII}.

\begin{claim}\label{clm:Connectivity:failuresIII}
The following holds a.a.s.:
each time when Maker picks a component $C$,
she needs to choose at most $\log^2 n$ edges until
a success happens.
\end{claim}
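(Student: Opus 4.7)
The plan is to mirror the argument of Claim~\ref{clm:Connectivity:failuresII}: show that on every single random draw from $E(C,V\setminus C)$ in Stage~III the failure probability is uniformly bounded below a constant strictly smaller than $1$, then observe that $\log^2 n$ consecutive failures have super-polynomially small probability and union bound over the $O(n)$ iterations of Stage~III.

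First I would bound $e(B)$ by summing the maximal lengths of the three stages (beyond which Maker forfeits): at most $\frac{n}{2a}+n^{0.9}+2\cdot\frac{1.1n}{4a}=\frac{1.05n}{a}+n^{0.9}$ rounds, yielding $e(B)\leq b\bigl(\frac{1.05n}{a}+n^{0.9}\bigr)\leq 2.2n$ for large $n$, using $b\leq 2a$. Next I would lower bound $|E(C,V\setminus C)|$ for any component $C$ picked in Stage~III. At the end of Stage~II every component of $M$ has size at least $3$, since Stage~II terminates only when no size-$2$ component remains and Maker's moves in Stage~II only merge components. The same holds throughout Stage~III because Maker again only claims cross-component edges. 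Moreover, as long as Stage~III has not terminated a second component exists, giving $n-|C|\geq 3$. Since $x(n-x)$ on $\{3,\dots,n-3\}$ is minimized at the endpoints, $|C|(n-|C|)\geq 3(n-3)\geq 2.9n$ for $n$ large.

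Combining the two bounds, on every single random draw the probability of hitting a Breaker edge is at most $\frac{2.2n}{2.9n}<0.8$, so the probability of $\log^2 n$ consecutive failures within one iteration is at most $0.8^{\log^2 n}=n^{-\omega(1)}$. Since each iteration contributes at least one Maker edge and Stage~III lasts at most $\frac{1.1n}{4a}$ rounds, there are $O(n)$ iterations, and a union bound closes the argument. The only small technicality is that successive draws inside one iteration may be without replacement and hence dependent; however, removing an unsuccessful choice only increases the fraction of free edges among the remaining pool, so the per-draw upper bound on the failure probability remains valid throughout an iteration. I do not expect any conceptual obstacle: the argument is essentially Claim~\ref{clm:Connectivity:failuresII} with the component-size lower bound upgraded from $2$ to $3$ and with the round budget enlarged by the contribution of Stage~III.
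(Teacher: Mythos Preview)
Your proposal is correct and follows essentially the same approach as the paper: bound $e(B)$ by the total round budget, lower bound $|E(C,V\setminus C)|$ via the minimum component size in Stage~III, conclude a constant per-draw success probability, and union bound over $O(n)$ iterations. The only differences are numerical: the paper uses the a.a.s.~bound $\frac{n}{4a}+o(n)$ for Stage~II (rather than the forfeiture bound $\frac{1.1n}{4a}$) to get $e(B)<2.1n$, and observes that every Stage~II merge joins a size-$2$ component to a component of size at least $2$, so in fact every component at the start of Stage~III has size at least $4$; this gives $e(C,V\setminus C)\geq 4(n-4)$ and a per-draw failure bound of $0.6$ instead of your $0.8$. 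Your looser constants are perfectly sufficient for the conclusion.
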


\begin{proof}
When Maker enters Stage III, each of her components has size at least 4. 
By the already proven bounds on the number of rounds
for Stages I and II, and the given bound for forfeiting in Stage III, we have that
Breaker claims at most 
$b\cdot \left(\frac{n}{2a} + \frac{n}{4a} + 1.1\cdot \frac{n}{4a} + o(n)\right) < 2.1n$ 
edges until the end of Stage III. Since $e(C,V\setminus C)\geq 4(n-4)$ for every component $C$ picked in Stage III, it follows that each time Maker chooses an edge in Stage III
the probability of a failure is bounded from above by
$\frac {2.1n}{4(n-4)} < 0.6$. The rest of the proof then is analogous to Claim~\ref{clm:Connectivity:failuresII}.
\end{proof}

We will condition on this from now on.
Moreover note that in Stage II at most $\frac{n}{4}$
iterations happen, since at the beginning of Stage III the number of components is bounded by $\frac{n}{4}$.
Next, we aim to show that a.a.s.~only $O(n^{0.9})$ iterations will have at least one failure. For this it is enough
to consider all but the last $n^{0.2}$ iterations, as these last iterations cannot contribute so much to this number.
We now call a component $C$ \textit{bad} if $e_B(C,V\setminus C)\geq n^{0.9}$, and note that
the number of bad components is bounded by $O(n^{0.1})$.
In particular, since we have at least $n^{0.2}$ components (by ignoring the last $n^{0.2}$ iterations), 
each time Maker picks a new component $C$ u.a.r.,
the probability of this component being bad is bounded by
$O(n^{0.1}/n^{0.2})=O(n^{-0.1})$.
By applying Lemma~\ref{lemma:Chernoff_modified}(b) we get that a.a.s.~in $O(n^{0.9})$
iterations Maker chooses a bad component.
In all the other iterations, i.e.~when $C$ is not bad,
the probability of having a failure when 
choosing the first edge in $E(C,V\setminus C)$
is bounded by $O(n^{0.9}/n)=O(n^{-0.1})$.
Hence, applying  Lemma~\ref{lemma:Chernoff_modified}(b) once more we get that a.a.s.~only $O(n^{0.9})$ of these iterations will have at least one failure.
Hence, putting everything together, we get that in total there can be at most $O(n^{0.9})$
iterations with at least one failure,
leading to a total of $O(n^{0.9}\log^2 n)$ failures
throughout Stage~III, because of Claim~\ref{clm:Connectivity:failuresIII}.
In particular, it follows that Stage~III a.a.s.~lasts at most $\frac{n}{4a} + O(n^{0.9}\log^2 n)$
rounds and Maker does not forfeit.
\end{proof}

\bigskip

%%%%%%%%%%%%%%%%%%%%%%%%%%%%%%%%%%%%%%%%%%%%%%%%%%%%%
%%
%%   Hamiltonicity Game (larger b)
%%
%%%%%%%%%%%%%%%%%%%%%%%%%%%%%%%%%%%%%%%%%%%%%%%%%%%%%

\subsection{Hamiltonicity game with $b > a$}

\begin{proof}[Proof of Theorem~\ref{thm:maker_hamilton}~\ref{ham_large_b}]
Maker's strategy consists of two stages which are described below. 
At some points during the strategy, Maker might try to claim an edge which she already claimed earlier. In that case, she just claims the next edge according to her strategy. Moreover,
if she does not win the game within the first 
$\frac{n}{a} + n^{0.9}$ rounds, she forfeits the game.

\smallskip

\textbf{Stage I:} In this stage, Maker builds a Hamilton path. She starts the game with an arbitrary edge $uv$ forming a path $P$ on two vertices. Afterwards, she aims to extend $P$ in steps until it is Hamiltonian, where extending means to 
create a longer path using all current vertices of $P$ and at least one new vertex $x$ such that the updated path $P$ has one of the previous endpoints and the new vertex $x$ as its endpoints.

Throughout the strategy let $V_0=V\setminus V(P)$.
In order to do the described extension, in each step Maker chooses u.a.r. a vertex $x \in V_0$, and the endpoint $y$ of $P$ which was not added in the last step (in the first step she takes $y=u$). She then proceeds as follows:

\begin{enumerate}
\item[(1)] She first tries to claim $xy$. If successful, that completes the step, and $x$ becomes a new endpoint to $P$.

\item[(2)]
Otherwise, let $P = (v_1,v_2,\ldots,v_{v(P)})$ be enumerated in order starting at $v_1 = y$. 
Maker first aims to create large stars centered in $x$ and $y$:
\begin{enumerate}
\item[(i)] She first tries to claim 
$\frac{a}{20b}\log n\cdot \log\log n$ distinct edges
between $x$ and $V\setminus \{x,y\}$ u.a.r., 
stores their endpoints except from $x$ in a set $V_x$, and sets $V_x'\subset V_x$ to be those endpoints for which the edge choice was successful.
\item[(ii)] She then tries to claim 
$\frac{a}{20b}\log n\cdot \log\log n$ distinct edges
between $y$ and $V\setminus (\{x,y\}\cup V_x)$ u.a.r., 
stores their endpoints except from $y$ in a set $V_y$, and sets $V_y'\subset V_y$ to be those endpoints for which the edge choice was successful.
\end{enumerate}
Afterwards, she chooses u.a.r. a pair of vertices $(x',y') \in V_x' \times V_y'$. Depending on whether $x'$ and $y'$ are part of the path $P$, she tries to claim one of the following edges:
\begin{itemize}
    \item[(a)] If $x' \in V_0$ and $y' \in V_0$, she tries to claim the edge $x'y'$. In case of success, the sequence $(x,x',y',y,v_2,\ldots,v_{v(P)})$ forms a new path on $V(P) \cup \{x,x',y'\}$, completing the step.
    
    \item[(b)] If $x'=v_i \in V(P)$ and $y'=v_j \in V(P)$ and $i < j$, she tries to claim $v_{i-1}v_{j+1}$. In case of success, the sequence 
    $(x,x',v_{i+1},\ldots,v_{j-1},y',y,v_2,\ldots,v_{i-1},v_{j+1},v_{j+2},\ldots,v_{v(P)})$ forms a new path on $V(P) \cup \{x\}$, completing the step. Otherwise, if $i > j$, she tries to claim $v_{j-1}v_{i+1}$ and creates a new path analogously.
    
    \item[(c)] If $x'=v_i \in V(P)$ and $y' \in V_0$, she tries to claim the edge $v_{i+1}y'$. In case of success, the sequence $(x,x',v_{i-1},\ldots,v_2,y,y',v_{i+1},\ldots,v_{v(P)})$ forms a new path on $V(P) \cup \{x,y'\}$, completing the step.
    
    \item[(d)] If $x' \in V_0$ and $y'=v_i \in V(P)$, she tries to claim the edge $x'v_{i-1}$. In case of success, the sequence $(x,x',v_{i-1},\ldots,v_2,y,y',v_{i+1},\ldots,v_{v(P)})$ forms a new path on $V(P) \cup \{x,x'\}$, completing the step.
\end{itemize}

Maker repeats choosing pairs of $V_x' \times V_y'$ u.a.r. and tries to claim the corresponding edge until she succeeds once or fails for $2\log n$ consecutive attempts. 
In the former case, $P$ is updated according to the described sequence in (a)--(d). 
In the latter case, Maker forfeits the game.
\end{enumerate}
	
\textbf{Stage II:}
If Maker completed Stage I without forfeiting, she has created a Hamilton path $P = (v_1,\ldots,v_n)$. She then considers the path $P' = (v_1,\ldots,v_{n-1})$ and creates a new Hamilton path with endpoints $v_n$ and $v_{n-1}$ using the strategy from Stage~I. Thus, she either forms a Hamilton cycle, or forfeits the game.

\medskip

\textbf{Strategy discussion:}
If Maker never forfeits, she creates a Hamilton cycle
and wins the game. We aim to show that this indeed happens with probability at least
$\frac{1}{(10b/a)^{21b/a}}$.
We start our analysis by showing the following claim which states that Maker is very likely able to successfully execute (2) under some assumptions.

\begin{claim}\label{c:stage1(2)-ham}
    Assume that Maker tries to connect two vertices $x$ and $y$ in Stage~I, and both vertices have Breaker's degree less than $n - \frac{180bn}{a\log\log n}$, and the game lasted less than $(1+o(1))\frac{n}{a}$ rounds so far. Then, with probability at least $1 - O(\frac{1}{n})$, Maker successfully executes (2) and makes at most $\frac{a}{5b}\log n\cdot \log\log n$ edge choices in the process.
\end{claim}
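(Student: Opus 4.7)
I would decompose the proof into three parts: lower-bound the sizes of the probe sets $V_x'$, $V_y'$ produced by (2)(i)--(ii); upper-bound the fraction of ``unusable'' (bad or invalid) pairs in $V_x'\times V_y'$; then use the $2\log n$ independent pair-selections to amplify the per-trial success probability.

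For the first part, the hypothesis $d_B(x), d_B(y) < n - \frac{180bn}{a\log\log n}$ ensures that the free degree at each of $x$ and $y$ is at least $\frac{180bn}{a\log\log n} - O(1)$. When Maker draws $s := \frac{a}{20b}\log n\log\log n$ distinct edges at $x$ uniformly at random, each lies in a non-PhantomBreaker slot with probability at least $(1-o(1))\frac{180b}{a\log\log n}$, giving $\mathbb{E}|V_x'| \geq (9-o(1))\log n$. By Lemma~\ref{lemma:Chernoff_modified}(a) with a suitable constant deviation, $|V_x'|\geq 4\log n$ holds with probability at least $1 - n^{-c}$ for some $c>1$, and similarly for $|V_y'|$. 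Thus the event $\mathcal{E} := \{|V_x'|, |V_y'| \geq 4\log n\}$ occurs with probability at least $1 - O(1/n)$. For the second part, I would classify a pair $(x',y') \in V_x'\times V_y'$ as \emph{invalid} when the candidate edge from (a)--(d) is undefined (which happens exactly when $x'$ or $y'$ coincides with the far endpoint $v_{v(P)}$ of $P$) and as \emph{bad} when that candidate is already claimed by PhantomBreaker. The invalid pairs number at most $|V_x'|+|V_y'|\leq 2s$, so on $\mathcal{E}$ they make up an $O(\log\log n/\log n) = o(1)$ fraction. For bad pairs, each PhantomBreaker edge serves as candidate for at most a constant number of pairs across the four cases, and any fixed ordered pair of vertices lies in $V_x'\times V_y'$ with probability at most $(s/(n-2))^2$. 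Combined with $e(B) = O(n)$ from the round-count hypothesis this gives $\mathbb{E}[B] = O(s^2/n) = O((\log n\log\log n)^2/n)$, so Markov's inequality yields $B \leq 4\log^2 n$ with probability at least $1 - O((\log\log n)^2/n)$, which is $1 - O(1/n)$ up to absorbed polyloglog factors.

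On $\mathcal{E}\cap\{B\leq 4\log^2 n\}$ the fraction of bad-or-invalid pairs is at most $1/2$ for large $n$; since the $2\log n$ pair-selections are independent uniform draws with replacement, the probability that they all fail is at most $(1/2)^{2\log n} = n^{-2\log 2} = o(1/n)$. Adding the failure probabilities from the three parts yields the desired $1 - O(1/n)$ success bound, and the edge-count bound is automatic: $|V_x|+|V_y|+2\log n \leq \frac{a}{10b}\log n\log\log n + 2\log n < \frac{a}{5b}\log n\log\log n$ for large $n$. The main delicate point is the concentration of $B$: the direct Markov step loses a $(\log\log n)^2$ factor relative to $1/n$, but this is dominated by the other two error contributions and absorbed into the stated $O(1/n)$.
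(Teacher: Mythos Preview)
Your first part (lower-bounding $|V_x'|,|V_y'|$) and the edge-count check are fine and match the paper. The gap is in your concentration of $B$.

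You bound $\mathbb{E}[B]=O(s^2/n)=O\!\left((\log n\log\log n)^2/n\right)$ and then apply Markov at threshold $4\log^2 n$ to obtain $\Prob(B>4\log^2 n)=O\!\left((\log\log n)^2/n\right)$. But $(\log\log n)^2/n$ is \emph{not} $O(1/n)$, since $(\log\log n)^2\to\infty$; it is also not ``dominated by the other two error contributions''---on the contrary, it dominates both the $O(1/n)$ from part one and the $n^{-2\log 2}$ from part three. So your total error is $\Theta\!\left((\log\log n)^2/n\right)$, which does not prove the claim as stated. There is no room to tune the threshold: the good-pair count you need in the amplification step is $\Theta(|V_x'||V_y'|)=\Theta(\log^2 n)$, while $|V_x||V_y|=\Theta(\log^2 n(\log\log n)^2)$, so raising the Markov threshold to recover $O(1/n)$ would push the bad fraction above $\tfrac12$.

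The paper avoids this by not bounding $B$ via Markov over pairs at all. Instead it classifies \emph{vertices}: set $V^{\text{(bad)}}$ to be those $v$ for which the vertex relevant to the candidate edge (namely $v$ itself if $v\in V_0$, or the appropriate path-neighbour of $v$ if $v\in V(P)$) has Breaker degree exceeding $\log\log n$. The round-count hypothesis gives $|V^{\text{(bad)}}|=O(n/\log\log n)$, so each of the $s$ random draws into $V_x$ lands in $V^{\text{(bad)}}$ with probability $O(1/\log\log n)$, giving $\mathbb{E}|V^{\text{(bad)}}\cap V_x|\leq (\tfrac{1}{5}+o(1))\log n$; now Lemma~\ref{lemma:Chernoff_modified}(c) yields $|V^{\text{(bad)}}\cap V_x|\leq 1.5\log n$ with failure probability at most $n^{-1.5}$. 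On that event the number of Breaker edges among the candidates is \emph{deterministically} at most $1.5\log n\cdot|V_y'|+2\log\log n\cdot|V_x|+O(\log n\log\log n)\leq 2\log n\cdot|V_y'|$, which is at most half of $|E^{(2)}|\geq 4\log n\cdot|V_y'|$. The key difference is that reducing to a one-dimensional count of high-degree vertices in $V_x$ makes a genuine exponential tail bound available, whereas your pair-count $B$ only admits Markov.
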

\begin{proof}
    Maker makes at most 
    $2\frac{a}{20b}\log n\cdot \log\log n$ edge choices
    according to (i) and (ii), and at most 
    $2\log n$ edge choices when repeating (a)--(d).  
    This already proves the claimed bound on the number 
    of edge choices.

    We next argue that $|V_x'| \geq 4\log n$
    with probability $1-O(\frac{1}{n})$: 
    As Breaker's degree of $x$ is less than 
    $n - \frac{180bn}{a\log\log n}$ when Maker starts executing the strategy given by (2) and since Maker will make no more than $\frac{a}{5b}\log n\cdot \log\log n$ edge choices when executing (2), Breaker's degree of $x$ will not exceed 
$n - \frac{170bn}{a\log\log n}$ in the process.
Hence, each time Maker tries to claim an edge in (i)
the probability of success is larger than
$\frac{160b}{a\log\log n}$.
The expected number of successes in (i) is therefore at least $8\log n$, and by Lemma~\ref{lemma:Chernoff_modified}(a) with $p=\frac{160b}{a\log\log n}$, we get that
the probability for having $|V_x'| < 4\log n$ is bounded by $\frac{1}{n}$. 
By the same argument, we have $|V_y'| \geq 4\log n$
    with probability $1-O(\frac{1}{n})$.
We condition on $|V_x'| \geq 4\log n$ and $|V_y'| \geq 4\log n$ going forward.

Next, let $V^{\text{(bad)}} \subset V$ consist of all vertices from $V_0$ that have Breaker's degree more than $\log\log n$, and of all vertices from $V(P)$ that have a neighbour in $P$ with Breaker's degree more than $\log\log n$, immediately before the execution of (2) is started. 
We want to show that we have 
$|V^{\text{(bad)}} \cap V_x| \leq 1.5\log n$
with probability $1-O(\frac{1}{n})$.
To do so, first note 
$|V^{\text{(bad)}}|\leq (1+o(1))\frac{4bn}{a\log\log n}$, since
Breaker could create only $(1+o(1))\frac{2bn}{a\log\log n}$ vertices of degree more than $\log\log n$ in the at most $(1+o(1))\frac{n}{a}$ rounds, and each of those vertices can contribute at most $2$ to the value $|V^{\text{(bad)}}|$.
Hence, whenever Maker makes an edge choice $xv$ according to (i),
the probability that $v$ belongs to $V^{\text{(bad)}}\cap V_X$ is upper bounded by 
$(1+o(1))\frac{4b}{a\log\log n}$. 
As Maker has $\frac{a}{20b}\log n\cdot \log\log n$ 
edge choices in (i), the expected size of
$V^{\text{(bad)}}\cap V_X$ is at most $(\frac15 + o(1))\log n$.
Applying Lemma~\ref{lemma:Chernoff_modified}(c) with $p=(1+o(1))\frac{4b}{a\log\log n}$, we see that
$|V^{\text{(bad)}} \cap V_x| > 1.5\log n$ happens with probability smaller than $\frac{1}{n}$. We condition on $|V^{\text{(bad)}} \cap V_x| \leq 1.5\log n$ for the remainder of the proof. 

Let $E^{(2)}$ be the set of edges which are assigned to the pairs $V_x' \times V_y'$ according to the strategy, i.e.~which Maker should claim according to the cases (a)--(d) described in (2). Since this assignment is injective, we have $|E^{(2)}| = |V_x'|\cdot|V_y'| \geq 4\log n \cdot |V_y'|$. We want to show that $|E^{(2)} \cap E(B)| \leq 2\log n \cdot |V_y'|$ holds throughout the process. 
Before the start of the execution of (2), for each vertex $v\in V^{\text{(bad)}} \cap V_x$,  Breaker could only claim $|V_y'|$ edges of $E^{(2)}$ that are assigned to pairs $(v,y')$. Moreover, for every vertex $v \in V_x \setminus V^{\text{(bad)}}$ he could claim at most $2\log\log n$ edges of $E^{(2)}$ assigned to pairs $(v,y')$, where the factor $2$ comes from the fact that we might need to consider the edges adjacent to two vertices of degree at most $\log\log n$ if $v \in V(P)$. Furthermore, Breaker can claim at most 
$\frac15\log n\cdot \log\log n$ further edges of 
$E^{(2)}$ while Maker executes her strategy. 
In total, this gives 
\begin{align*}
|E^{(2)} \cap E(B)| &\leq 1.5\log n \cdot |V_y'| + 2\log\log n \cdot |V_x| + \frac15\log n\cdot \log\log n\\ 
& = (1.5 + o(1))\log n \cdot |V_y'| \leq 2\log n \cdot |V_y'|\end{align*}
throughout the process.
Thus, whenever Maker takes an edge of $E^{(2)}$, she succeeds with probability at least $\frac12$. The probability to fail for $2\log n$ consecutive attempts is therefore bounded by $\frac{1}{n}$.
\end{proof}

The remainder of the analysis will closely follow the analysis of Theorem~\ref{thm:maker_matching}~\ref{match_large_b}. Let $\eps = \frac{a}{10b}$, and split the game into 
$\frac{1}{\eps}$ phases, where the $i$-th phase finishes when $V_0$ contains less that $n - i\eps n$ vertices. We consider the following events:
\begin{itemize}
\item event $\mathcal{A}_i$: Maker completes the $i$-th phase without forfeiting, and the phase lasts at most $\frac{\eps n}{a} + \sqrt{n}\log^2 n \cdot \log\log n$ rounds.
\item event $\mathcal{B}_i$: 
let $D_{i-1}$ be the vertices
whose Breaker degree increased to $\frac{n}{10}$ during phase $i-1$, then all of $D_{i-1}$ are removed from $V_0$ until the end of the $i$-th phase.
\end{itemize}

Moreover, let $\mathcal{E}_j = \bigwedge_{i \leq j} (\mathcal{A}_i \land \mathcal{B}_i)$. Note that 
by the given forfeiting condition on the total 
number of rounds, the game will last for at most $\frac{n}{a}+n^{0.9}$ rounds. In that case the total number of Breaker's edges is upper bounded by $B^{\text{(tot)}}:=(1+o(1)) \frac{bn}{a}$ throughout the game.
If additionally all the events $\mathcal{E}_i$, $i \leq \frac{1}{\eps}-1$ occur, then Breaker's maximum degree in $V_0$ will never surpass $(1+o(1))(\frac{1}{10} + \frac{2b\eps}{a}) n < \frac{n}{3}$,
with the same argument as in the proof
of Theorem~\ref{thm:maker_matching}~\ref{match_large_b}.

We next prove a few claims similarly to the claims used for Theorem~\ref{thm:maker_degree}~\ref{m_deg_large_b}
and Theorem~\ref{thm:maker_matching}~\ref{match_large_b}.

\begin{claim} \label{c:prob-ai-ham}
For every $i<\frac{1}{\eps}$ we have $\operatorname{Pr}(\mathcal{A}_i|\, \mathcal{E}_{i-1}) = 1-o(1)$.
\end{claim}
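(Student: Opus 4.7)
The plan is to follow the template established in the proof of Claim~\ref{c:prob-ai-pm} and adapt it to the Hamilton-path extensions of Stage~I. Throughout the argument I would condition on $\mathcal{E}_{i-1}$. As noted just before the statement of the claim, under this conditioning together with the global forfeit bound of $\frac{n}{a}+n^{0.9}$ rounds, every vertex currently in $V_0$ during phase $i$ has Breaker degree at most $(1+o(1))(\frac{1}{10}+\frac{2b\eps}{a})n<\frac{n}{3}$, which is far below the threshold $n-\frac{180bn}{a\log\log n}$ appearing in Claim~\ref{c:stage1(2)-ham}. Moreover, Breaker's total edge count is bounded by $B^{\text{(tot)}}=(1+o(1))\frac{bn}{a}=O(n)$, so only $O(1)$ vertices of $V(K_n)$ can ever exceed that threshold during the whole game.

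Next, I would bound the expected number of failures at step~(1) during phase $i$. The key observation is the alternation of endpoints in the strategy: by tracing the possible outcomes of steps $j-1$ and $j-2$ (whether each used (1) or (2)), one checks that the endpoint $y$ used in step $j$ was introduced into $P$ as a freshly sampled vertex from the then-current $V_0$ within the previous two or three steps. Conditionally on the earlier history, that vertex was chosen uniformly at random from $V_0$, and hence $\mathbb{E}[d_B(y,V_0)]\leq 2e(B)/|V_0|=O(1)$, so each execution of step~(1) fails with probability $O(1/n)$. Applying Markov's inequality to the total number of (1)-failures across the $\eps n$ steps of phase $i$ shows that this count exceeds $\sqrt{n}$ with probability $o(1)$.

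Whenever step~(1) fails, step~(2) is invoked. Since $y$ is fresh we have $d_B(y)\leq \frac{n}{3}+O(1)$, and since $x$ is a new sample from $V_0$ we have $d_B(x)\leq \frac{n}{3}$; both are well below the threshold in Claim~\ref{c:stage1(2)-ham}. Hence that claim applies and gives that (2) succeeds with probability $1-O(1/n)$ while using at most $\frac{a}{5b}\log n\cdot\log\log n$ edge choices. A union bound over the at most $\sqrt{n}$ invocations of (2) bounds the total (2)-failure probability by $o(1)$. Summing over the phase, on the likely event Maker performs at most $\eps n+O(\sqrt{n}\log n\cdot\log\log n)$ edge choices, i.e., at most $\frac{\eps n}{a}+\sqrt{n}\log^2 n\cdot\log\log n$ rounds.

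The main obstacle will be resolving the circular dependence between the bound on $d_B(y)$ and the rarity of (1)-failures: the freshness of $y$ is what makes (1)-failures unlikely, but the freshness analysis itself relies on failures being rare so that the alternation of endpoints is not disrupted by repeated use of (2). I would handle this with a stopping-time argument: let $\tau$ be the first step in phase $i$ at which either the cumulative number of (1)-failures exceeds $\sqrt{n}$ or a call to (2) fails. Up to time $\tau$ the inductive setting used above is valid, and a standard combination of Markov's inequality and a union bound then yields $\Prob(\tau\leq \eps n \mid \mathcal{E}_{i-1})=o(1)$, finishing the argument.
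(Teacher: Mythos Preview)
Your approach is correct in outline and reaches the same conclusion, but it is noticeably more involved than the paper's argument. The key simplification you are missing is that, by the construction of the strategy, every vertex of $K_n$ serves as the endpoint $y$ at most once across the entire game (indeed $y_j=x_{j-2}$, and the $x_j$'s are pairwise distinct). With this observation, the expected number of $(1)$-failures in phase $i$ is bounded directly:
\[
\sum_{j}\frac{d_B(y_j)}{|V_0|}\;\le\;\sum_{v\in V}\frac{d_B(v)}{\eps n}\;\le\;\frac{2B^{\text{(tot)}}}{\eps n}\;=\;O(1),
\]
using only the global bound $e(B)\le B^{\text{(tot)}}$ that already follows from the forfeit rule. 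This is completely agnostic to whether step~(2) was invoked at steps $j-2$ or $j-1$, which is precisely the dependence you were worried about; consequently the freshness argument and the stopping-time patch become unnecessary for the failure count. The paper then applies Markov with the tighter threshold $\log n$ (rather than your $\sqrt{n}$) and finishes via Claim~\ref{c:stage1(2)-ham} and a union bound, exactly as you do.

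Your route still works but pays for the detour with some loose statements you should fix: $\mathbb{E}[d_B(y,V_0)]=O(1)$ and $d_B(y)\le\frac{n}{3}+O(1)$ both ignore the up to $O(\log n\cdot\log\log n)$ Breaker edges that may be claimed during preceding executions of~(2); the correct additive error is $O(\log n\cdot\log\log n)$, which is harmless for applying Claim~\ref{c:stage1(2)-ham} but should be stated accurately.
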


\begin{proof}
We assume $\mathcal{E}_{i-1}$. Because 
$i < \frac{1}{\eps}$, we know that $|V_0| > \eps n$. Moreover, Breaker has claimed at most $B^{\text{(tot)}}$ edges so far. Then, when Maker tries to extend $P$ at an endpoint $y$, the probability of having a failure in (1) can be bounded by $\frac{d_B(y)}{|V_0|} \leq \frac{d_B(y)}{\eps n}$. Since every vertex $v$ is only used at most once as the endpoint $y$ as described in the strategy, we can bound the expected number of failures caused by (1) by $\sum_{v \in V} \frac{d_B(v)}{\eps n} \leq \frac{2 B^{\text{(tot)}}}{\eps n} = O(1)$. Hence,
by Markov's inequality it follows that the probability that more than $\log n$ such failures happen, is bounded by $o(1)$. 

From now on, we condition on at most $\log n$ failures happening in the $i$-th phase.
For each failure, Maker proceeds with (2). 
By Claim~\ref{c:stage1(2)-ham}, Maker successfully corrects the failure within at most $\frac{1}{5b}\log n\cdot \log\log n$ rounds with probability at least $1 - O(\frac{1}{n})$, because Breaker's maximum degree within $V_0$ and the endpoints of $P$ can be bounded by $(1+o(1))\frac{n}{3}$, by the explanation before this claim and because the endpoints of $P$ belonged to $V_0$ before Maker did the last two extensions of $P$. 
Taking a union bound over all of the at most $\log n$ failures, we see that a.a.s.~Maker does not forfeit in the $i$-th phase and the phase lasts at most $\frac{\eps n}{a} + \frac{1}{5b}\log^2 n \cdot \log\log n$ rounds.
\end{proof}

\begin{claim} \label{c:prob-bi-ham}
For every $i<\frac{1}{\eps}$ we have $\operatorname{Pr}(\mathcal{B}_i|\, \mathcal{E}_{i-1} \land \mathcal{A}_i) \geq (1-o(1)) \varepsilon^{|D_{i-1}|}$.
\end{claim}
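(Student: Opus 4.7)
I would mirror the template from Claims~\ref{c:prob-bj-min-k} and~\ref{c:prob-bi-pm}, concentrating on the \emph{primary} vertices $x_j$ chosen by step~(1) at each iteration $j$ of phase~$i$. Let $V_0^{(0)}$ denote the set $V_0$ at the start of phase~$i$ and put $D_{i-1}^*:=D_{i-1}\cap V_0^{(0)}$, $t^*:=|D_{i-1}^*|$. By the general bound on the total number of rounds we have $|D_{i-1}|=O(1)$ and hence $t^*=O(1)$. If $t^*=0$ then $\mathcal{B}_i$ holds deterministically, so we may assume $t^*\geq 1$.

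I would next show that, conditionally on $\mathcal{A}_i$, the number $k^*$ of iterations of phase~$i$ satisfies $k^*=(1-o(1))\varepsilon n$. Claim~\ref{c:prob-ai-ham} already ensures that at most $\log n$ of the iterations invoke step~(2); every other iteration removes exactly one vertex from $V_0$ (its primary $x_j$), while an iteration invoking (2) removes at most three vertices. Since phase~$i$ terminates precisely when $V_0$ loses $\varepsilon n$ vertices, the stated bound on $k^*$ follows immediately.

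Because at each iteration the primary $x_j$ is drawn uniformly at random from the current $V_0$, and because secondary removals affect at most $O(\log n)$ additional vertices, the set $\{x_1,\ldots,x_{k^*}\}$ is -- up to this negligible $O(\log n)$ perturbation -- uniformly distributed among the $k^*$-subsets of $V_0^{(0)}$. Mimicking the hypergeometric computation from Claim~\ref{c:prob-bj-min-k}, one then obtains
\begin{align*}
\Prob\bigl(\mathcal{B}_i\,\big|\,\mathcal{E}_{i-1}\land\mathcal{A}_i\bigr)
&\geq \Prob\bigl(D_{i-1}^*\subseteq\{x_1,\ldots,x_{k^*}\}\,\big|\,\mathcal{E}_{i-1}\land\mathcal{A}_i\bigr) \\
&\geq \prod_{j=0}^{t^*-1}\frac{k^*-j}{|V_0^{(0)}|-j}
\geq \left(\frac{(1-o(1))\varepsilon n}{n}\right)^{t^*}
\geq (1-o(1))\varepsilon^{|D_{i-1}|},
\end{align*}
where the last step uses $t^*\leq|D_{i-1}|$ together with $\varepsilon<1$.

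The main obstacle I anticipate is to justify this approximate uniformity rigorously: Breaker can deliberately target the vertices of $D_{i-1}^*$ through their large Breaker degree, biasing whether they appear in the star sets $V_x',V_y'$ of step~(2) and hence whether they are removed as secondaries. Fortunately, any secondary removal of a vertex in $D_{i-1}^*$ only \emph{helps} the event $\mathcal{B}_i$, and the total number of secondary removals in phase~$i$ is $O(\log n)=o(\varepsilon n)$. I would absorb these effects into the $(1-o(1))$ slack either by coupling the primary sequence with a uniformly random permutation of $V_0^{(0)}$ that is stepped through as needed, or by a short inclusion--exclusion over which vertices of $D_{i-1}^*$ are removed as primaries versus as secondaries.
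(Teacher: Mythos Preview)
Your proposal is correct and follows the same approach the paper intends, which merely cites Claims~\ref{c:prob-bj-min-k} and~\ref{c:prob-bi-pm} as templates without spelling out details. You are in fact more careful than the paper: the subtlety you flag about secondary removals in step~(2) perturbing the uniformity of the primary choices is real (and already present implicitly in Claim~\ref{c:prob-bi-pm}), and your permutation-coupling suggestion is precisely the standard way to make the hypergeometric computation rigorous here.
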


\begin{proof}
The proof is analogous to the proofs of
Claim~\ref{c:prob-bj-min-k} and Claim~\ref{c:prob-bi-pm}.
\end{proof}

\begin{claim} \label{c:prob-last-ai-ham}
We have $\operatorname{Pr}(\mathcal{A}_{1/\eps}|\, \mathcal{E}_{1/\eps-1}) = 1-o(1)$.
\end{claim}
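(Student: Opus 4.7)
The plan is to mirror the two-part analysis used for Claim~\ref{c:prob-last-ai-pm}. I split the last phase into part~(a), the remainder of Stage~I that completes the Hamilton path, and part~(b), Stage~II, which closes the cycle. Throughout we condition on $\mathcal{E}_{1/\eps-1}$, which ensures $e(B)\leq B^{\text{(tot)}}=(1+o(1))\tfrac{bn}{a}$ and that every vertex currently in $V_0$ has Breaker degree at most $(1+o(1))\tfrac{n}{3}$.

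For part~(a), I first bound the number of failures in step~(1). At step~$s$ of the last phase the failure probability is $d_B(y_s,V_0^{(s)})/|V_0^{(s)}|$, and since each vertex serves as the endpoint $y_s$ at most once during the entire strategy (after being an endpoint it is absorbed into the interior of $P$), we have $\sum_s d_B(y_s)\leq 2e(B)=O(n)$. Splitting the steps according to whether $|V_0^{(s)}|\geq\sqrt{n}$ or $|V_0^{(s)}|<\sqrt{n}$, the expected number of failures is $O(\sqrt{n})$ (the first regime contributes $\sum_s d_B(y_s)/\sqrt{n}$ and the second regime has at most $\sqrt{n}$ steps), so Markov yields at most $\sqrt{n}\log n$ failures with probability $1-o(1)$. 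For each failure, Claim~\ref{c:stage1(2)-ham} handles step~(2) in at most $\tfrac{a}{5b}\log n\cdot\log\log n$ edge choices with probability $1-O(1/n)$, provided both $x_s$ and $y_s$ have Breaker degree below $n-\tfrac{180bn}{a\log\log n}$. Since $e(B)=O(n)$, only $O(1)$ vertices ever violate this threshold, and by the events $\mathcal{B}_j$ for $j<1/\eps$ such very bad vertices have a.a.s.\ been absorbed into the path's interior well before the last phase; so with probability $1-o(1)$ every application of~(2) satisfies the degree condition. A union bound over the $O(\sqrt{n}\log n)$ applications of Claim~\ref{c:stage1(2)-ham} then gives overall failure probability $o(1)$, and the total round count in part~(a) is $\tfrac{\eps n}{a}+O(\sqrt{n}\log^2 n\cdot\log\log n)$.

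For part~(b), Stage~II reduces to a single extension with $V_0=\{v_n\}$ and $P'=(v_1,\ldots,v_{n-1})$ having endpoints $v_1$ and $v_{n-1}$. Maker takes $x=v_n$, $y=v_1$, and first tries to claim $v_nv_1$ in~(1); if this fails, step~(2) is invoked, and since $V_0\setminus\{v_n\}=\varnothing$ only sub-case~(b) of the case analysis applies. Claim~\ref{c:stage1(2)-ham} gives success within $O(\log n\cdot\log\log n)$ edge choices with probability $1-O(1/n)$, and the resulting new Hamilton path has endpoints $v_n$ and $v_{n-1}$; combined with Maker's edge $v_{n-1}v_n$ already claimed in Stage~I, this closes a Hamilton cycle. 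Combining (a) and (b), the last phase lasts at most $\tfrac{\eps n}{a}+\sqrt{n}\log^2 n\cdot\log\log n$ rounds with probability $1-o(1)$, establishing~$\mathcal{A}_{1/\eps}$. The main technical obstacle is controlling the contribution of the $O(1)$ very-bad vertices to which Claim~\ref{c:stage1(2)-ham} does not directly apply; this is handled by noting that such a vertex is picked as $x_s$ with probability $O(1/|V_0^{(s)}|)$ and can serve as $y_s$ at most once across the whole game, so the resulting error probability is $o(1)$ and gets absorbed into the union bound without affecting the conclusion.
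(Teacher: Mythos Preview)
Your proof is correct and follows essentially the same approach as the paper: split the last phase into the regimes $|V_0|\geq\sqrt{n}$ and $|V_0|<\sqrt{n}$, bound the number of failures in~(1) by $O(\sqrt{n}\log n)$ via the observation that each vertex serves as $y$ at most once, and invoke Claim~\ref{c:stage1(2)-ham} for every failure.

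One simplification: your discussion of ``very bad vertices'' (those with Breaker degree exceeding $n-\tfrac{180bn}{a\log\log n}$) is unnecessary. You already noted at the outset that under $\mathcal{E}_{1/\eps-1}$ every vertex in $V_0$, and hence also both endpoints of $P$ (which were in $V_0$ at most two steps ago), has Breaker degree at most $(1+o(1))\tfrac{n}{3}$; since the last phase contributes only $O(\eps n)$ further Breaker edges, this bound persists throughout the phase and is far below the threshold $n-\tfrac{180bn}{a\log\log n}$ needed for Claim~\ref{c:stage1(2)-ham}. So the hypothesis of that claim is always met, and no separate handling of very bad vertices is needed. This is exactly how the paper argues. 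Your final paragraph's alternative argument (bounding the probability of picking a very bad vertex as $x_s$) is then redundant, and in fact does not quite work on its own: if such a vertex were in $V_0$, it would be picked with probability~$1$ eventually since $V_0$ is exhausted. Fortunately this does not matter, because under the conditioning no such vertex can be in $V_0$ during the last phase.
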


\begin{proof}
If Maker indeed finishes this last phase in at most 
$\frac{\eps n}{a} + \sqrt{n}\log^2 n \cdot \log\log n$ rounds (which we will show is going to happen a.a.s.) and because we condition on $\mathcal{B}_{1/\eps - 1}$, we can bound Breaker's degree for the whole phase by $(1+o(1))(\frac{1}{10} + \frac{2b\eps}{a}) n < \frac{n}{3}$. We split the analysis of the last phase depending on whether $|V_0| \geq \sqrt{n}$. While $|V_0| \geq \sqrt{n}$, we can use the same argument as for Claim~\ref{c:prob-ai-ham} to show that the expected number of failures using (1) is bounded by $\frac{2B^{(tot)}}{\sqrt{n}} = O(\sqrt{n})$. Using Markov's inequality, we know that the probability of having less than $\sqrt{n}\log n$ failures is $1 - o(1)$. We condition on that event. For each of those failures, we can use Claim~\ref{c:stage1(2)-ham} to show that Maker can execute (2) using at most $\frac{1}{5b}\log n\cdot \log\log n$ further rounds with probability $1 - O(\frac{1}{n})$. Using union bound, Maker can correct all of the at most $\sqrt{n}\log n$ failures with probability $1 - o(1)$, and takes at most $\frac{1}{5b}\sqrt{n}\log^2 n \cdot \log\log n$ rounds to do so.

In the remaining part of the phase, we have $|V_0| \leq \sqrt{n}$ and thus the number of failures according to (1) is bounded by $\sqrt{n}$. As before, each of those failures can be corrected with probability $1 - O(\frac{1}{n})$ in at most $\frac{1}{5b}\log n\cdot \log\log n$ rounds. Using a union bound, Maker successfully finishes Stage~I after at most $\frac{1}{5b}\sqrt{n}\log n\cdot \log\log n$ further rounds with probability $1 - o(1)$.

In Stage~II Maker tries to connect the two endpoints of $P$. In case of a failure, she again corrects that failure using (2) and succeeds within at most $\frac{1}{5b}\log n\cdot \log\log n$ further rounds with probability $1 - o(1)$ according to Claim~\ref{c:stage1(2)-ham}. In total the last phase a.a.s.~lasts less than $\frac{\eps n}{a} + \sqrt{n}\log^2 n \cdot \log\log n$ rounds as required.
\end{proof}

Using the results from Claim~\ref{c:prob-ai-ham}, Claim~\ref{c:prob-bi-ham}, and Claim~\ref{c:prob-last-ai-ham}, we can lower bound the probability that Maker wins the game using the given strategy by
$$
(1-o(1))\eps^{\sum_j |D_{j-1}|} > \frac{1}{(10b/a)^{21b/a}},
$$
following the argument in the final steps of the proof of Theorem~\ref{thm:maker_degree}~\ref{m_deg_large_b} and using the fact that Breaker can create at most $\frac{2B^{\text{(tot)}}}{0.1 n} = (1+o(1)) \frac{20b}{a}$ vertices of degree at least $\frac{n}{10}$.
\end{proof}

\bigskip

%%%%%%%%%%%%%%%%%%%%%%%%%%%%%%%%%%%%%%%%%%%%%%%%%%%%%
%%
%%   Hamiltonicity Game (smaller b)
%%
%%%%%%%%%%%%%%%%%%%%%%%%%%%%%%%%%%%%%%%%%%%%%%%%%%%%%

\subsection{Hamiltonicity game with $b \leq a$}

\begin{proof}[Proof of Theorem~\ref{thm:maker_hamilton}~\ref{ham_small_b}]
Let Maker play according to the same strategy
as for Theorem~\ref{thm:maker_hamilton}~\ref{ham_large_b}.

We first consider the phase of the game for which 
$|V_0| \geq \frac14 n$ holds. If Maker needs at most $\frac{3n}{4a} + \frac{1}{5b}\log^2 n\cdot\log\log n$ rounds to reach the point where $|V_0| \leq \frac14 n$, Breaker can claim at most $\frac34 n + \log^2n \cdot \log\log n$ edges in the process. This particularly ensures that Breaker's maximum degree within $V_0$ is much smaller than
$n - \frac{200n}{\log\log n}$.
Then, by the same logic as in the proof of Claim~\ref{c:prob-ai-ham}, Maker will a.a.s.~fail at most $\log n$ times when trying to claim an edge in (1). For each of those failures, she successfully executes (2) with probability at least $1 - O(\frac{1}{n})$ within $\frac{1}{5b}\log n\cdot \log\log n$ rounds according to Claim~\ref{c:stage1(2)-ham}. Therefore, using union bound, Maker a.a.s.~needs at most $\frac{1}{5b}\log^2 n \cdot \log\log n$ rounds for correcting all failures by executions of (2), which additionally ensures the required bound on the number of rounds as mentioned above. Note that at this point, there can be at most one vertex $v \in V_0$ with $d_B(v) \geq \frac12 n$. 

Next, consider the phase until we obtain $|V_0| \leq \frac{200n}{\log\log n}$. Then, if Maker needs no more than $\frac{n}{4a} - \frac{200n}{a\log\log n} + \frac{1}{5b}\log^2 n \cdot \log^2\log n$ rounds to reduce $V_0$ to that size, Breaker can claim at most $n - (1 - o(1))\frac{200n}{\log\log n}$ edges up to this point. In particular, we can still use Claim~\ref{c:stage1(2)-ham} to ensure that each failure in (1) will be corrected by an execution of (2) with probability $1 - O(\frac{1}{n})$. This time the expected number of failures in (1) can be bounded by 
$\frac{(2 - o(1))n}{200n / \log\log n} = O(\log\log n)$, using the same argument as in the proof of Claim~\ref{c:prob-ai-ham}. Thus, by Markov's inequality, the probability to have more than $\log n\cdot \log\log n$ such failures is $o(1)$, and by Claim~\ref{c:stage1(2)-ham} and union bound, Maker can correct all those failures by executions of (2) within at most 
$\frac{1}{5b}\log^2 n \cdot \log^2\log n$ rounds. 
This in particular ensures the mentioned bound on the number of rounds for this phase.
Moreover, since during this phase all but 
$\frac{200n}{\log\log n}$ vertices are removed from $V_0$, 
and almost all the removed vertices
are chosen u.a.r.~according to (1),
the probability that the vertex $v$ with high Breaker degree remains in $V_0$ (if it existed) is bounded by $O(\frac{1}{\log\log n}) = o(1)$. From now on, we condition on the event that $v$ was removed from $V_0$. 

Then, Breaker's maximum degree in $V_0$ is still bounded by $\frac34 n$. Thus, we can follow the proof of Claim~\ref{c:prob-last-ai-ham} to see that Maker a.a.s. finishes the Hamilton cycle in $(1 + o(1))\frac{200n}{a\log\log n}$ further rounds.
\end{proof}

\bigskip

%%%%%%%%%%%%%%%%%%%%%%%%%%%%%%%%%%%%%%%%%%%%%%%%%%%%%
%%
%%   Concluding
%%
%%%%%%%%%%%%%%%%%%%%%%%%%%%%%%%%%%%%%%%%%%%%%%%%%%%%%

\section{Concluding Remarks}
\label{sec:concluding}

In this paper, we initiated the study of Maker-PhantomBreaker games. While we were able to prove constant lower and upper bounds for the winning probability of Maker for the four studied games, we did not attempt to optimize those bounds. Using our methods, it is possible to improve the lower bounds by choosing some constants and degree thresholds for PhantomBreaker more carefully, but that does not appear to yield anything close to our upper bound. Still, determining the winning probability Maker can achieve for a given bias remains an intriguing question.
\begin{problem}
    For either of the $(a: b)$ biased games -- mindegree-$k$, connectivity, perfect matching or Hamiltonicity,
    with $a,b \in \mathbb{N}$, determine a constant $c(a,b)$ such that Maker has a strategy to win with probability at least $(1 - o(1))c(a,b)$, and PhantomBreaker has a strategy to win with probability at least $(1 - o(1))(1 - c(a,b))$.
\end{problem}
It is not immediately obvious that such a constant must exist, but we believe that it does. Moreover, for mindegree-$1$ game we conjecture that our bound from Theorem~\ref{thm:breaker_degree1} is tight.

\begin{conjecture}
Let $a,b$ be positive integers such that $\frac{b}{2a}>1$ is an integer. Then Maker has a randomized strategy to win the $(a:b)$ mindegree-1 Maker-PhantomBreaker game on $E(K_n)$ with probability at least $(1-o(1))\left( (\frac{b}{2a})!\right)^{-1}$.
\end{conjecture}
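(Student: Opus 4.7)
Set $k=\frac{b}{2a}\geq 2$. The plan is to propose a symmetric randomised Maker strategy whose structure mirrors the phase decomposition used in the proof of Theorem~\ref{thm:breaker_degree1}, and to show that the resulting winning probability matches the upper bound $1/k!$ up to a $(1-o(1))$ factor. Throughout the game let $V_0$ denote the set of vertices of Maker-degree~$0$. While $|V_0|\geq 2a$, Maker samples in each round $a$ disjoint unordered pairs of vertices from $V_0$ uniformly at random and tries to claim the corresponding edges; a successful claim removes both endpoints from $V_0$, while a failure (the edge is already PhantomBreaker's) leaves $V_0$ unchanged. Once $|V_0|<\log n$, Maker runs a short clean-up stage in which she repeatedly picks a vertex from $V_0$ and tries uniformly random free incident edges until it is touched. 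Maker forfeits as soon as she has not won within $\frac{n}{2a}+n^{0.9}$ rounds.

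The analysis follows the inductive phase decomposition of Theorem~\ref{thm:breaker_degree1}. The forfeit cap implies that PhantomBreaker claims at most $(1+o(1))nk$ edges during the game and therefore creates at most $k+O(1)$ \emph{starred vertices}, i.e.\ vertices of Breaker-degree~$n-1$. Maker loses if and only if some starred vertex is never touched. Split the first $\frac{n}{2a}$ rounds into $k$ \emph{phases} of $\lceil\frac{n-1}{b}\rceil$ rounds each, and for $0\leq i\leq k$ let $\mathcal{E}_i$ be the event that every vertex starred by the end of phase~$i$ has been touched by Maker by that time; $\mathcal{E}_0$ is trivial and $\mathcal{E}_k$ together with a successful clean-up implies Maker's win. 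Writing $N_i$ for the number of new stars completed during phase~$i$, a simple edge count yields the crucial constraint $\sum_{i'\leq i} N_{i'}\leq i$, which says that PhantomBreaker cannot complete his stars faster than one per phase on average.

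Conditional on $\mathcal{E}_{i-1}$, the number of PhantomBreaker-edges inside $V_0\times V_0$ is $O(n)$ while $V_0$ still contains $\Theta(n^2)$ pairs, so each random-pair attempt fails with probability only $O(n^{-1})$; Lemma~\ref{lemma:Chernoff_modified}(b) then shows that only $O(1)$ failures occur during the whole phase a.a.s., and phase~$i$ consequently contributes $(1-o(1))\tfrac{n-1}{k}$ new touches, each essentially a uniformly random vertex of $V_0$. On $\mathcal{E}_{i-1}$ the first $i-1$ phases removed $(1-o(1))(i-1)\tfrac{n-1}{k}$ vertices from $V_0$, so $|V_0|=(1-o(1))\tfrac{(k-i+1)n}{k}$ at the beginning of phase~$i$. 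A short hypergeometric computation then shows that, for any set of $N_i$ vertices in $V_0$ that PhantomBreaker newly stars during phase~$i$, the probability Maker touches all of them in that phase is at least
\[
(1-o(1))\,\left(\frac{1}{k-i+1}\right)^{N_i},
\]
and this bound is independent of PhantomBreaker's adaptive choice of targets by the vertex-symmetry of Maker's sampling. Multiplying across phases, a straightforward optimisation of $\prod_i (k-i+1)^{-N_i}$ over all $(N_i)$ satisfying the edge constraint $\sum_{i'\leq i}N_{i'}\leq i$ and $\sum_i N_i\leq k$ shows that the infimum is attained at the balanced choice $N_i=1$ for every~$i$ and equals $1/k!$. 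Hence $\Prob(\text{Maker wins})\geq \Prob(\mathcal{E}_k)\geq (1-o(1))/k!$.

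The hard part is to make the probability estimate fully rigorous in the face of PhantomBreaker's complete adaptivity; he may partially star many vertices in parallel, abort and switch targets mid-phase, or deliberately attack $V_0\times V_0$ to inflate Maker's failure rate. The intuitive reason why none of this helps is that every such deviation either wastes edges (and therefore strictly decreases the number of completed stars) or, by the vertex-symmetry of Maker's random-pair sampling, cannot improve on the worst-case balanced pattern $N_i=1$ that the optimisation already captures. Turning this intuition into a clean phase-by-phase coupling, and in particular controlling Maker's per-phase failure count when PhantomBreaker actively attacks $V_0\times V_0$, is where the bulk of the technical work will lie.
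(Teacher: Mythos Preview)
The statement you are addressing is a \emph{conjecture} in the paper, not a theorem: the authors explicitly leave it open in the concluding remarks and provide no proof. The only related Maker bound the paper establishes for the mindegree-$1$ game with $b>2a$ is Theorem~\ref{thm:maker_degree}\ref{m_deg_large_b}, which yields a winning probability of order $(20b/a)^{-16b/a}$, far below the conjectured $((b/2a)!)^{-1}$. There is therefore no paper proof to compare your proposal against.

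Your proposal is, as you yourself say, a heuristic outline rather than a proof: you explicitly flag that ``the hard part is to make the probability estimate fully rigorous'' and that handling PhantomBreaker's adaptivity ``is where the bulk of the technical work will lie.'' Beyond the gaps you acknowledge, note two further issues. First, your equivalence ``Maker loses if and only if some starred vertex is never touched'' collapses: a vertex of Breaker-degree $n-1$ can by definition never be touched, so the content of $\mathcal{E}_i$ is simply that no vertex has been fully starred by the end of phase~$i$; but even on $\mathcal{E}_k$ there may remain vertices in $V_0$ of Breaker-degree $n-O(1)$, and your clean-up stage (random incident edges for at most $an^{0.9}$ further attempts) cannot provably touch such a vertex within the forfeit cap. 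Second, the per-phase bound $(k-i+1)^{-N_i}$ tacitly treats the $N_i$ target vertices as fixed before phase~$i$ and assumes Maker's $\Theta(n/k)$ random pair-choices sample $V_0$ nearly uniformly; but PhantomBreaker sees Maker's moves and can both shift targets mid-phase and saturate $E_B(V_0,V_0)$ once $|V_0|$ drops below $n^{1/2}$, invalidating the hypergeometric computation precisely in the last phase where the bound is tightest. These are not cosmetic issues: resolving them is exactly what would be needed to settle the conjecture, which remains open.
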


There are many variations of this problem that can be studied. It would be interesting to investigate other winning objectives for Maker, such as pancyclicity, non-k-colourability, or non-planarity. One could also study different boards, such as random graphs. Finally, alternative variants of the game could be considered, like Avoider-Enforcer games or Connector-Breaker games.

An obvious follow-up question is what happens if the roles are reversed: Maker plays as the phantom and her moves are hidden, while Breaker's moves are visible. In this case, it quickly becomes clear that Breaker needs a very large bias growing with the number of vertices to have a chance for having a randomized strategy that offers a positive probability of winning in this scenario. We have already made some progress in this direction.

\section*{Acknowledgment}
The research of DC and FH is supported financially by the Federal Ministry of Research, Technology and Space through the DAAD project 57749672.
The research of MM and MS is partly supported by Provincial Secretariat for Higher Education and Scientific Research, Province of Vojvodina (Grant No.~142-451-2686/2021) and partly supported by Ministry of Science, Technological Development and Innovation of Republic of Serbia (Grants 451-03-137/2025-03/ 200125 \& 451-03-136/2025-03/ 200125 and bilateral Serbia-Germany cooperation 001545243 2025 13440 003 000 000 001 03 005). The research of YM was supported by the ANR project P-GASE (ANR-21-CE48-0001-01).

\begin{center}
    \includegraphics[scale=0.18]{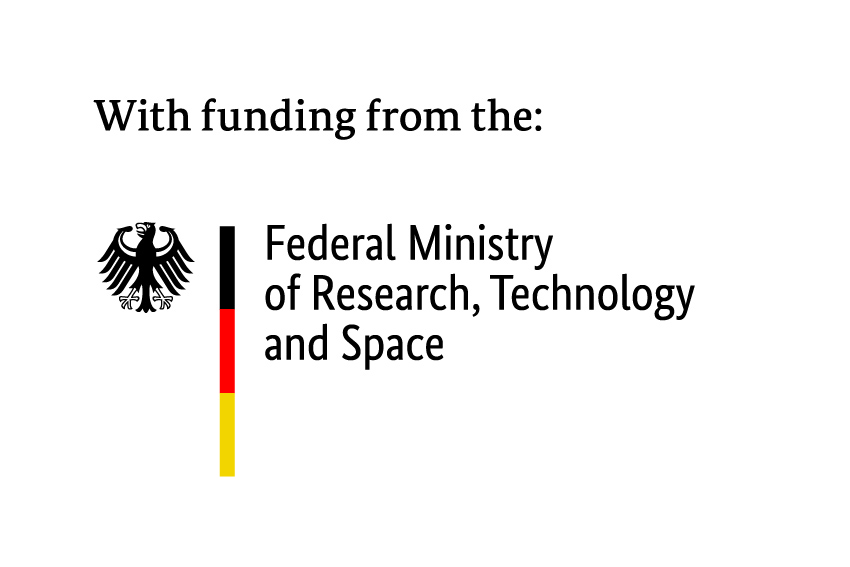}
    \hspace{0.5cm}
    \includegraphics[scale=0.7]{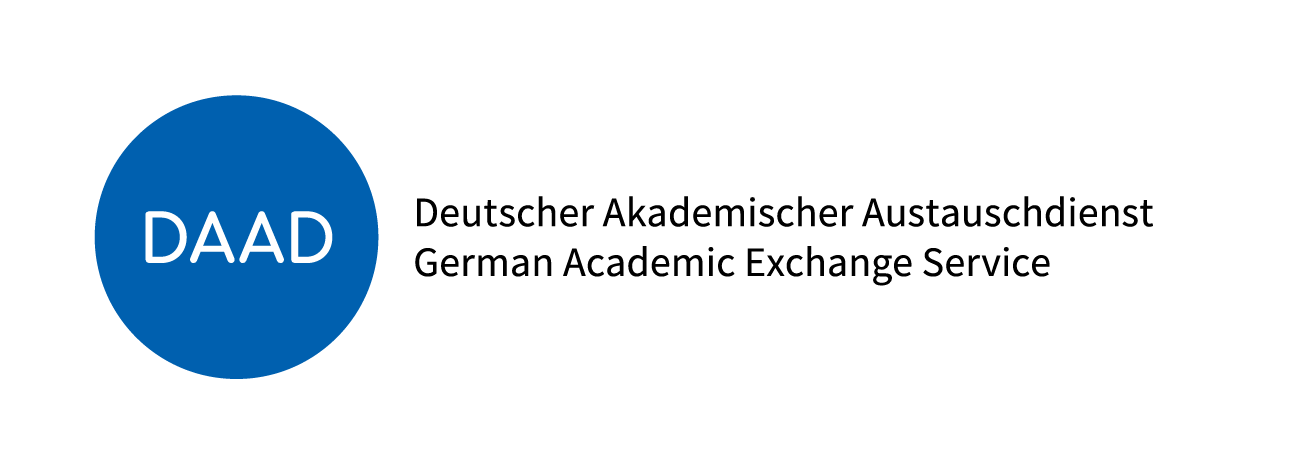}
\end{center}

\bibliographystyle{amsplain}
\bibliography{references}

\end{document}